\documentclass[a4paper, 11pt]{amsart}

\usepackage{lmodern}
\usepackage[english]{babel}
\usepackage[utf8]{inputenc}
\usepackage{amssymb}
\usepackage{amsfonts}
\usepackage{amsthm}
\usepackage{amsmath}

\usepackage{mathrsfs, soul}

\usepackage{hyperref}
\usepackage{enumitem}
\usepackage{bm}
\usepackage{tikz-qtree}

\usepackage[margin=1.1in]{geometry}
\addtolength{\oddsidemargin}{-.3in}
\addtolength{\evensidemargin}{-.3in}
\setlength{\textwidth}{6.2in}
\setlength{\unitlength}{1mm}

\setlist[itemize]{leftmargin=*}

\usepackage[normalem]{ulem}

\setcounter{tocdepth}{3}

\theoremstyle{plain}
\newtheorem{theorem}{Theorem}[section]
\newtheorem{corollary}[theorem]{Corollary}

\newtheorem{lemma}[theorem]{Lemma}
\newtheorem{proposition}[theorem]{Proposition}
\newtheorem{conjecture}[theorem]{Conjecture}

\theoremstyle{definition}

\theoremstyle{remark}
\newtheorem*{remark}{Remark}

\numberwithin{equation}{section}

\newcommand\numberthis{\stepcounter{equation}\tag{\theequation}}

\renewcommand{\setminus}{\smallsetminus}
\newcommand{\ssum}[1]{\sum_{\substack{#1}}}

\newcommand{\e}{{\rm e}}
\newcommand{\df}{\mathop{}\!\mathrm{d}}
\newcommand{\eps}{{\varepsilon}}

\providecommand{\C}{}
\renewcommand{\C}{{\mathbb C}}
\newcommand{\E}{{\mathbb E}}

\newcommand{\K}{{\mathbb K}}
\newcommand{\N}{{\mathbb N}}
\newcommand{\Q}{{\mathbb Q}}
\renewcommand{\P}{{\mathbb P}}
\newcommand{\R}{{\mathbb R}}

\newcommand{\Z}{{\mathbb Z}}
\newcommand{\1}{{\mathbf 1}}

\newcommand{\CC}{{\mathcal C}}
\newcommand{\fd}{{\mathfrak d}}
\newcommand{\II}{{\mathfrak I}}
\newcommand{\gS}{{\mathfrak S}}

\newcommand{\vphi}{{\bm \phi}}
\newcommand{\vpsi}{{\bm \psi}}
\newcommand{\vchi}{{\bm \chi}}
\newcommand{\vt}{{\bm t}}

\newcommand{\cE}{{\mathcal E}}

\newcommand{\cH}{{\mathcal H}}
\newcommand{\I}{{\mathcal I}}

\newcommand{\cR}{{\mathscr R}}

\newcommand{\J}{{\mathbb J}}

\let\H\relax
\DeclareMathOperator{\H}{{\mathbb H}}

\DeclareMathOperator{\Id}{Id}

\DeclareMathOperator{\Res}{Res}
\DeclareMathOperator{\srd}{srd}

\DeclareMathOperator{\denom}{denom}

\DeclareMathOperator{\sgn}{sgn}
\DeclareMathOperator{\Hol}{{\mathit H}}

\renewcommand{\tilde}{\widetilde}
\renewcommand{\bar}{\overline}
\renewcommand{\hat}{\widehat}

\newcommand{\HN}[2]{\norm{#2}_{(#1)}}

\newcommand{\abs}[1]{{\left| {#1} \right|}}
\newcommand{\norm}[1]{{\left\| {#1} \right\|}}
\newcommand{\floor}[1]{{\left\lfloor {#1} \right\rfloor}}
\newcommand{\syf}[1]{\left(\!\left(#1\right)\!\right)}
\newcommand{\pmat}[1]{\begin{pmatrix}#1\end{pmatrix}}
\newcommand{\smat}[1]{\left(\begin{smallmatrix}#1\end{smallmatrix}\right)}

\renewcommand{\mod}[1]{\ ({\rm mod\ }#1)}

\renewcommand\Re{\operatorname{Re}}
\renewcommand\Im{\operatorname{Im}}

\newcommand\syb[1]{\langle{#1}\rangle}

\newcommand{\nvt}{\norm{\vt}}
\newcommand{\pdt}{{\mathfrak T}}

\numberwithin{equation}{section}

\title[Limit laws for rational continued fractions]{Limit laws for rational continued fractions and value distribution of quantum modular forms}
\date{\today}

\author{S. Bettin}
\address{SB: Dipartimento di Matematica, Universit\`a di Genova, via Dodecaneso 35, 16146 Genova, Italy}
\email{bettin@dima.unige.it}

\author{S. Drappeau}
\address{SD: Aix Marseille Universit\'e, CNRS, I2M UMR 7373, 13453 Marseille, France}
\email{sary-aurelien.drappeau@univ-amu.fr}

\begin{document}

\begin{abstract}
  We study the limiting distributions of Birkhoff sums of a large class of cost functions (observables) evaluated along orbits, under the Gauss map, of rational numbers in~$(0,1]$ ordered by denominators. We show convergence to a stable law in a general setting, by proving an estimate with power-saving error term for the associated characteristic function. This extends results of Baladi and Vallée on Gaussian behaviour for costs of moderate growth.
  
  We apply our result to obtain the limiting distribution of values of several key examples of quantum modular forms. We obtain the Gaussian behaviour of central values of the Esterman function~$\sum_{n\geq 1} \tau(n) \e^{2\pi i n x}/\sqrt{n}$ ($x\in \Q$), a problem for which known approaches based on Eisenstein series have been so far ineffective.
  We give a new proof, based on dynamical systems, that central modular symbols associated with a holomorphic cusp form for~$SL(2,\Z)$ have a Gaussian distribution, and give the first proof of an estimate for their probabilities of large deviations.  We also recover a result of Vardi on the convergence of Dedekind sums to a Cauchy law, using dynamical methods.
\end{abstract}

\subjclass[2010]{11A55 (Primary); 37C30, 11F03, 11F67, 60F05 (Secondary)}

\keywords{Gauss map, continued fractions, modular forms, transfer operator, stable laws}

\maketitle

\section{Value distribution of quantum modular forms}

Let~$\Gamma\subset SL(2, \Z)$ be a cofinite Fuchsian subgroup, which acts of functions on~$\P^1(\Q)$ by the weight-$k$ ``slash operator''
$$ f|_k\gamma(x):=(cx+d)^{-k}f(\gamma x) \qquad \text{if } \gamma=\pmat{a & b\\ c& d}\in \Gamma, $$
where~$\gamma x = \frac{ax+b}{cx+d}$ is the Möbius transformation.

In their simplest guise, quantum modular forms, introduced by Zagier~\cite{Zagier2010} (see~\cite{Zagier1999} for early examples), denote the set of functions
$$ f: \P^1(\Q) \setminus S \to \C, $$
for some finite set $S$, satisfying a form of modularity, in the purposely vague sense that for all~$\gamma\in \Gamma$, the function of~$x\in\P^1(\Q) \setminus (S\cup \gamma^{-1}S)$ defined by
\begin{equation}
  h_\gamma(x) := f(x) - f|_k\gamma(x),\label{eq:qmf-mod}
\end{equation}
has some regularity property. Part of the research effort has focused on constructing examples in interrelated ways:
\begin{itemize}
  \item generating series associated with combinatorial sequences: Fishburn matrices~\cite{BringmannLiEtAl2014}, unimodal sequences~\cite{BringmannEtAl2015, KimLimEtAl2016}, partition theory~\cite{NgoRhoades2017} (we note that in the latter, quantum modularity is actually a crucial tool for the asymptotic estimation of partition-related sequences),
  \item radial limits of modular objects (mock theta function, quasi-modular forms) defined on the hyperbolic disk~\cite{Zagier2001, ChoiEtAl2016, Folsom2014, BringmannLovejoyEtAl2018, FolsomEtAl2013},
  \item Eichler integrals, periods of modular functions~\cite{BringmannRolen2016, BringmannKaszianEtAl2019}; state integrals involving the quantum dilogarithm function~\cite{LawrenceZagier1999, DimofteGukovEtAl2009, GaroufalidisKashaev2015},
  \item describing the homology of spaces of cusp forms~\cite{BruggemanEtAl2015,ChoiLim2016,BruggemanChoieEtAl2018},
  \item Kashaev knot invariants and Nahm sums~\cite{Zagier2010,GaroufalidisZagier,BettinDrappeaua},
  \item correlations of the fractional part functions appearing as covariances in the Nyman-Beurling reformulation of the Riemann hypothesis~\cite{Bettin2013a, BettinConrey2013, BalazardMartin2015, LewisZagier2019},
  \item Diophantine approximation and multifractal analysis~\cite{JaffardMartin2018, RivoalRoques2013}.
\end{itemize}

In the present paper we are concerned with the following problem: given a quantum modular form~$f$, how do the multi-sets
\begin{equation}
  \{ f(x), x\in \Q \cap (0, 1], \denom(x)\leq Q \}, \label{eq:set-distrib}
\end{equation}
appropriately normalized, distribute as~$Q\to \infty$?
This topic is tightly related to weak limits of partial sums of certain arithmetic functions, which goes back to Hardy-Littlewood~\cite{Hardy1914}, and has been since then periodically revisited: we mention in particular the works~\cite{Wilton1933, Jurkat1983, Marklof1999} on theta sums, and~\cite{Bettin2015, Maier2016} on cotangent sums. These works are all concerned with instances of QMF of \emph{non-zero} weight.

Our interest in this question comes from the statistical study of additive twists central values of~$L$ functions, which as we will see are \emph{weight-zero} QMF related to the third item described above (periods of modular functions). We are aware of two occurences of this setting in the literature. The first is a result of Vardi~\cite{Vardi1993} on the existence of a limiting distribution for Dedekind sums. The second is a recent result of Petridis and Risager~\cite{PetridisRisager2018} on the distribution of modular symbols, which was motivated by conjectures of Mazur-Rubin~\cite{Mazur2016} and Stein~\cite{Stein2015}. Both results exploit a close connection with Fourier analysis of the modular surface (the spectral analysis of the hyperbolic Laplacian). It is unclear how to extend these methods to more general QMF.

In the present paper we present an approach, based on dynamical systems and the spectral properties of a family of transfer operators, which allows us to answer the question~\eqref{eq:set-distrib} for essentially all level~$1$ (\textit{i.e.} $\Gamma = SL(2, \Z)$) and weight~$0$ QMF.

We denote~$\sigma = \smat{&-1\\1&}$ and $\tau = \smat{1&1 \\ & 1}$ the two usual generators of~$SL(2, \Z)$, so that the associated period functions~\eqref{eq:qmf-mod} with weight~$k=0$ are given by
\begin{align*}
  h_\tau(x) = {}& f(x) - f(x+1), \\
  h_\sigma(x) = {}& f(x) - f(-1/x). \numberthis\label{eq:recip-hsigma}
\end{align*}
Our result may be stated informally as follows.

\begin{theorem}\label{th:distrib-qmf}
  Let~$f$ be a weight~$0$ QMF for~$SL(2, \Z)$, in the sense that~$h_\sigma$ extends to a Hölder-continuous function on~$(0, 1]$ with some regular growth behaviour at~$0$, and~$h_\tau = 0$.
  Then, up to a suitable renormalization, the multi-sets \eqref{eq:set-distrib} become equidistributed according to a stable law, which is characterized by the growth of~$h_\sigma$ at~$0$.
\end{theorem}

The hypothesis are stated more precisely in Theorem~\ref{th:main-general} below. The restriction to~$x\in (0, 1]$ and the assumption~$h_\tau = 0$ are made to clarify the statement, but are inessential (cf. footnote~\ref{fn:general_h}, page~\pageref{fn:general_h}) and are natural in applications. However, the restriction to~$\Gamma = SL(2,\Z)$ is important in our argument. What is required is that the action of~$\Gamma$ on~$\P^1(\R)$ can be induced into an expanding Markov map (the Gauss map in the case~$\Gamma = SL(2, \Z)$). The restriction to weight zero QMF is also natural, as the problem for non-zero weights is of different nature and typically simpler; see~\cite{appA} for more details.

We will find in practice that any bound of the shape~$h_\sigma(x) = O(x^{-1/\alpha})$ as~$x\to 0$ ($\alpha > 2$) ensures convergence to a Gaussian law. A bound of type~$h_\sigma(x) \asymp x^{-1/\alpha + o(1)}$ for some~$\alpha \in (0, 2)$, will typically imply the convergence to a stable law of parameter~$\alpha$.

Due to the relative weakness in its hypotheses, Theorem~\ref{th:distrib-qmf} applies to a wide class of QMF. In the next section, using Theorem~\ref{th:distrib-qmf}, we answer the distribution question for several arithmetic invariants. We expect further applications to follow in the future.

\section{Applications}

For all~$Q\geq 1$, we endow the set
$$ \Omega_Q := \{x = a/q,\ 1\leq a \leq q \leq Q, (a,q) = 1\} \subset\Q\cap(0,1] $$
with the uniform probability measure~$\P_Q$, and we denote~$\E_Q$ the associated expectation,
$$ \E_Q(f(x)) = \abs{\Omega_Q}^{-1} \sum_{x \in \Omega_Q} f(x). $$

\subsection{Central values of the Estermann function}

Let
$$ \tau(n) = \sum_{d\mid n}1 \qquad (n\geq 1) $$
denote the divisor function. The Estermann function, introduced in 1930~\cite{Estermann1930}, is defined by
$$ D(s,x) := \sum_{n\geq1}\frac{\tau(n)\e(nx)}{n^s} $$
for $\Re(s)>1$, and extended by analytic continuation otherwise. It was initially introduced in relation with the shifted divisor problem~$\sum_n \tau(n) \tau(n+1)$. Its functional equation still serves as a basic tool to derive Voronoï summation formulae, which are then used in conjunction with the circle method to study moments of~$L$-function and their arithmetics applications: we mention the proportion of critical zeroes of~$\zeta$~\cite{Conrey1989}, the binary divisor problems~\cite{Motohashi1994}, and non-vanishing of central~$L$-values~\cite{Luo2015}.

We mention a further connection with moments of Dirichlet $L$-functions~$L(s, \chi) = \sum_{n\geq 1} \chi(n) n^{-s}$. By~\cite[Theorem~5]{Bettin2016}, the twisted second moment of Dirichlet~$L$-functions satisfies
\begin{equation}
  \label{eq:link-Mx-Dx}
  \begin{aligned}
    M(a,q) := {}& \frac{1}{q^{1/2}}\sum_{\chi\mod q}|L(\tfrac12,\chi)|^2\chi(a) \\
    = {}& \Re D(\tfrac12,\tfrac aq)+\Im D(\tfrac12,\tfrac aq)+O(q^{-1/2})
  \end{aligned}
\end{equation}
for~$q$ prime and~$q\nmid a$. From this expression, we see that the second moment~$\sum_{a\mod{q}}\abs{M(a, q)}^2$ is essentially the fourth moment of Dirichlet~$L$-functions~$\sum_{\chi\mod{q}} |L(\frac12, \chi)|^4$, whose full evaluation in~\cite{Young2011} lies at the threshold of current techniques of analytic number theory (see~\cite{BlomerFouvryEtAl2017,BlomerEtAl2017} for further work on this topic). This fits in the general problem of understanding the distribution of central values of~$L$-functions and their twists, which is a fundamental topic in analytic number theory~\cite{Selberg1992, ConreyFarmerEtAl2005, Soundararajan2009, Harper2013, RadziwillSoundararajan2015}. Up to now, essentially all known results have been obtained by the moments method.

Using Theorem~\ref{th:distrib-qmf}, we obtain the following Central Limit Theorem.

\begin{theorem}\label{thm:estermann}
  For all~$\eps>0$,~$Q\geq 3$, and all rectangle~$\cR \subset \C$, we have
  \begin{equation*}
    \P_Q\Big(\frac{D(\tfrac12, x)}{\sigma(\log Q)^{\frac12}(\log \log Q)^{\frac32}} \in \cR\Big) = \int_{v_1+iv_2\in\cR}\frac{\e^{-(v_1^2+v_2^2)/2}\df v_1\df v_2}{2\pi} + O_\eps\Big(\frac1{(\log\log Q)^{1-\eps}}\Big) 
  \end{equation*}
  where~$\sigma = 1/\pi$.
\end{theorem}

We will obtain this result as an application of Theorem~\ref{th:distrib-qmf}, using the fact, proved in~\cite{Bettin2016},  that $D(\frac12,x)$ is a weight-$0$ QMF, the associated function~$h_\sigma$~\eqref{eq:recip-hsigma} being a $(\frac12-\eps)$-H\"older continuous function on~$(0,1]$.

The simplicity with which we will deduce Theorem~\ref{thm:estermann} from Theorem~\ref{th:distrib-qmf} contrasts with the fact that other methods appear to be completely ineffective with this problem.
The moments method (and therefore also the approach of~\cite{Nordentoft2018} described in the following section) can not be applied, due to the presence of a negligible proportion of~$x\in\Omega_Q$ with abnormally large continued fraction coefficients, whose contribution dominates the integer moments of~$D(\frac12, x)$. In fact, all moments of~$D(\frac12, x)$ and~$M(a, q)$ have recently been computed in~\cite{Bettin}: starting already from the second they grow faster that what is suggested by Theorem~\ref{thm:estermann}.

Another tentative approach to Theorem~\ref{thm:estermann} consists in obtaining a limiting distribution result for~$\sum_{n\geq 1}\tau(n)\e(nz)$, where~$z=x+iy$; $x\in[0,1]$ is chosen at random and~$y\to 0^+$, and transfering these properties to its discrete counterpart $D(\frac12, x)\approx \sum_{n\leq q^2} \tau(n)\e(nx)n^{-1/2}$, for~$x=a/q\in\Q$, by Fourier expansion. This method is employed for the incomplete Gauss sum in~\cite{DemirciAkarsu2014}, based on~\cite{Marklof1999}. In our setting, however, connecting the continuous and the discrete averages raises several additional difficulties, among which the problem of dealing with a divergent second moment as well as those coming from considering a central value rather than an object corresponding to an $L$-value off the line. Also the incomplete Gauss sums are not Gaussian distributed, but rather are distributed as the push-forward measure by a theta series. This suggests a difference in nature between the two problems and, notwithstanding the technical difficulties, it suggests this method could not be adapted to our case.

It would be interesting to obtain a statement analogous to Theorem~\ref{thm:estermann} for the values~$M(a,q)$, however the identity \eqref{eq:link-Mx-Dx} as stated holds only for $q$ prime; the corresponding identity for generic~$q$ involves a multiplicative convolution, which is not clearly accountable for using the present method. However we still believe $M(a,q)$ is distributed according to a normal law.
\begin{conjecture}
  As $q\to\infty$ along primes, the multi-set
  $$ \Big\{\frac{M(a,q)}{(\log Q)^{1/2}(\log \log Q)^{3/2}},\ 1\leq a < q \Big\} $$
  become distributed according to a dilated centered Gaussian.
\end{conjecture}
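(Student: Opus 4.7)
The plan is to reduce the conjecture, via identity~\eqref{eq:link-Mx-Dx}, to a single-denominator version of Corollary~\ref{cor:estermann}, and then to attack the latter through a Perron-type inversion of the Dirichlet series underlying the Baladi--Vall\'{e}e analysis. Since~\eqref{eq:link-Mx-Dx} for~$q$ prime reads~$M(a,q) = \Re D(\tfrac12, a/q) + \Im D(\tfrac12, a/q) + O(q^{-1/2})$, and the~$O(q^{-1/2})$ is harmless after the normalization by~$(\log q)^{1/2}(\log\log q)^{3/2}$, the real content is to upgrade the characteristic function of Corollary~\ref{cor:estermann} from the global average over~$\Omega_Q$ to the uniform expectation~$\E^*_q\bigl(\e^{itS_{h_S}(a/q)}\bigr)$ over~$a\in(\Z/q\Z)^*$ for a single prime~$q$, where~$h_S$ is the cocycle attached to~$D(\tfrac12,\cdot)$ via~\eqref{eq:def-hS}.

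To do so, the plan is to consider the bivariate Dirichlet series
$$ F(s, t) := \sum_{x\in\Q\cap(0,1]} q(x)^{-2s}\,\e^{itS_{h_S}(x)} = \sum_{q\geq 1} a_q(t)\,q^{-2s}, \qquad a_q(t) := \sum_{q(x)=q}\e^{itS_{h_S}(x)}, $$
whose partial sums up to~$q(x)\leq Q$ are controlled, by the analysis of~\cite{BaladiVallee2005}, through a dominant singularity at some~$s = s(t)$ close to~$1$ and a pole-free strip to its left. A truncated Mellin--Perron formula combined with a contour shift across this singularity should deliver an asymptotic of the form~$a_q(t) = (\text{residue at } s=s(t))\cdot q^{2s(t)-1} + O(q^{2s(t)-1-\delta})$ for some~$\delta>0$. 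For~$q$ prime, every~$a\in\{1,\dotsc,q-1\}$ satisfies~$q(a/q)=q$, so division by~$\phi(q)=q-1$ converts this asymptotic into a quasi-power expansion of the shape~\eqref{eq:BV-laplace} for~$\E^*_q\bigl(\e^{itS_{h_S}(a/q)}\bigr)$. The Berry--Esseen inequality and L\'{e}vy's continuity theorem, combined with the analysis of~$\II_{h_S}(t)$ carried out in the proof of Corollary~\ref{cor:estermann} (which depends only on the behaviour of~$h_S$ near~$0$), then yield the claimed bivariate Gaussian with variance~$(\log q)(\log\log q)^3/\pi^2$.

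The substantial obstacle is the contour shift, which demands polynomial upper bounds on~$(\Id-\H_{s,t})^{-1}$ in vertical strips~$\sigma_0\leq\Re(s)\leq 1$, uniformly in~$\abs{\Im(s)}$. The Dolgopyat-type estimates of~\cite{BaladiVallee2005} produce a horizontal pole-free strip of bounded vertical extent, which is enough for the cumulative expansion~\eqref{eq:BV-laplace} but not for isolating a single coefficient~$a_q(t)$. Obtaining the required polynomial growth in~$\abs{\Im(s)}$ is the transfer-operator analogue of passing from the prime number theorem with a fixed error term to asymptotics in short intervals, and would require a refinement of the UNI analysis at arbitrary spectral heights. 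A secondary issue is that the small-denominator tail~$q(x)=O(1)$ contributes to~$F(s,t)$ at all scales and must be subtracted cleanly, but this appears routine compared to the vertical bounds. Once these spectral refinements are in place, the remainder of the argument proceeds as for Corollary~\ref{cor:estermann}, and the conjecture follows with~$\sigma = 1/\pi$.
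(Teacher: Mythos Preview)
The statement is a \emph{Conjecture} in the paper; the authors give no proof and explicitly say that it is not clear how to adapt the dynamical method to handle a single denominator. Your proposal is therefore not to be compared against an existing proof but assessed on its own merits.

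Your reduction via~\eqref{eq:link-Mx-Dx} to a single-$q$ version of Corollary~\ref{cor:estermann} is correct, and you rightly identify the main difficulty as passing from the $\Omega_Q$-average to a fixed denominator. However, your diagnosis of the obstacle is inaccurate, and the proposed remedy would not work even if the obstacle you name were overcome.

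First, you misdescribe what the Dolgopyat--Baladi--Vall\'ee estimates deliver. They do \emph{not} merely give a pole-free region of bounded vertical extent: Lemma~\ref{lem:large-height} and Proposition~\ref{prop:mero-continuation} already yield meromorphic continuation of~$\gS(s,\vt)$ to the full half-plane~$\Re(s)\geq 2-\delta$, together with polynomial bounds $\abs{\gS(s,\vt)}\ll\abs{\tau}^{O(\delta)}\log\abs{\tau}$ on vertical lines for all large~$\abs{\tau}$. The ``required polynomial growth in~$\abs{\Im(s)}$'' you say is missing is in fact already in hand.

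Second, and more seriously, no Perron-type argument can extract an individual coefficient~$a_q(t)$ from this information. Perron's formula controls the partial sums~$\sum_{n\leq Q} a_n(t)$, with main term of order~$Q^2$ and, after the contour shift, error~$O(Q^{2-\delta})$. An individual coefficient satisfies~$\abs{a_q(t)}\leq\phi(q)\asymp q$, which is completely swamped by the error~$q^{2-\delta}$ in the difference~$\sum_{n\leq q}-\sum_{n\leq q-1}$. Recovering a single coefficient would require a pole-free strip of width exceeding~$1$ (in the paper's normalization, pushing past~$\Re(s)=1$), far beyond anything a refinement of the UNI analysis could plausibly provide. Your short-interval analogy is apt, but it cuts the other way: no zero-free strip of fixed width yields an asymptotic for primes in an interval of length~$1$, and the situation here is identical.

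The single-$q$ results known for modular symbols (\cite{PetridisRisager2018}, \cite{Nordentoft2018}, \cite{LeeSun2018}) all rely on additional structure---Eisenstein twists, spectral expansions, congruence-subgroup transfer operators---with no evident analogue for the Estermann function. That is precisely why the authors leave the statement as a conjecture.
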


\subsection{Modular symbols}

Out of many possible ways, modular symbols~\cite{Manin2009} can be seen as elements of the space of linear forms on $S_k(\Gamma_0(N))$, the vector space of cusp forms of weight $k\equiv 0\pmod{2}$ and level~$N\geq 1$, spanned by the Shimura integrals
\begin{equation*}
  f \mapsto \syb{x}_{f,m}:=\frac{(2\pi i)^{m}}{(m-1)!}\int_x^{i\infty} f(z)(z-x)^{m-1} \df z=:\syb{x}_{f,m}^++i\syb{x}_{f,m}^-
\end{equation*}
for any $1\leq m \leq k-1$,  $x\in\Q$, $f\in S_k(\Gamma_0(N))$, and where $\syb{x}_{f,m}^\pm\in\R$. (In this section and section~\ref{sec:mod-symb} only, the letter~$f$ will denote a holomorphic cusp form).

Up to an explicit factor, the value $\syb{x}_{f,m}$ is also the special value $L(f,x,m)$ of the analytic continuation of the $L$-function
$$ L(f,x,s) := \sum_{n=1}^\infty a_n \e(nx)n^{-s} \qquad (\Re s > k) $$
where we write~$f(z) =\sum_{n\geq1}a_n \e(nz)$ for~$\Im(z)>0$. Being at the intersection of the geometric, modular and arithmetic aspects of~$\Gamma_0(N)$, modular symbols received a considerable amount of interest, \emph{e.g.} for computing with modular forms~\cite{Cremona1997, Stein2007}. The central values of the Estermann function~$D(\frac12, x)$, which was the subject of the previous section, may be interpreted as a regularized modular symbol for the derivative $\frac{\partial}{\partial s}E_2(z,s)\vert_{s=1/2}$ of the Eisenstein series~\cite[chapter~3.5]{Iwaniec2002}.

The symbol associated with the central value~$m = k/2$ plays a particular rôle and is the focus of this section. Motivated by the $abc$-conjecture, the question of the size of modular symbols was initially asked by Goldfeld~\cite{Goldfeld1999} and later studied by Petridis and Risager~\cite{Petridis2002,PetridisRisager2004}. Interest in this question was recently revived by questions of Mazur, Rubin and Stein~\cite{Mazur2016,Stein2015}, motivated in part by the growth of ranks of elliptic curves. In particular, for $f$ a fixed form of weight $k=2$ (and so $m=1$) and $x\in (0, 1]$ varying along rationals of reduced denominator $q$, with $q\to\infty$, Mazur and Rubin predicted that
$$ \Big\{\frac{\syb{a/q}^\pm_{f,1}}{\sqrt{\log q}}, 1\leq a \leq q, (a,q)=1\Big\} $$
becomes asymptotically distributed according to a suitably dilated normal law. To our knowledge, only the first and second moments have been computed~\cite[Chapter~9]{BlomerEtAl2018}.

The situation changes with an additional average over~$q\leq Q$. Then the Central Limit Theorem for weight~$k=2$, level~$N\geq 1$ forms has been proved by Petridis and Risager~\cite{PetridisRisager2018}, by using the spectral analysis of twisted Eisenstein series and the location of eigenvalues of the hyperbolic Laplacian.

Using Theorem~\ref{th:distrib-qmf}, we will prove the following Central Limit Theorem for modular symbols associated with forms of level~$N=1$ and arbitrary weight~$k\geq 12$.

\begin{theorem}\label{thm:modsymbol}
  Let~$k\geq 12$ be even, and~$f\in S_k(SL(2,\Z))\smallsetminus\{0\}$ be fixed. Then for all~$Q\geq 2$ and rectangle~$\cR\subset \C$, we have
  \begin{equation}
    \P_Q\Big(\frac{\syb{x}_{f,\frac k2}}{\sigma_f\sqrt{\log Q}} \in \cR \Big) = \int_{v_1+iv_2\in\cR}\frac{\e^{-(v_1^2+v_2^2)/2}\df v_1\df v_2}{2\pi} + O\Big(\frac1{\sqrt{\log Q}}\Big), \label{eq:estim-modsymbol}
  \end{equation}
  with~$\sigma_f^2 = \frac{3 (4\pi)^k}{\pi \Gamma(k)} \norm{f}_k^2$, where~$\norm{f}_k$ is the weight-$k$ Petersson norm of~$f$.
\end{theorem}

For example, this result applies for~$k=12$ with $f$ being the discriminant modular form $\Delta(z)$. The error term is optimal and uniform with respect to~$\cR$.

Using Theorem~\ref{th:distrib-qmf}, we will also deduce the following statement, which can be interpreted as a Large Deviations Principle.

\begin{theorem}\label{thm:modsymbol-LD} Let~$f$ be fixed as in Theorem~\ref{thm:modsymbol}. For any~$\eps>0$, we have
  \begin{equation}
    \limsup_{Q\to \infty} \P_Q\Big(\abs{\syb{x}_{f, \frac k2}} > \eps\log Q\Big)^{1/\log Q} < 1.\label{eq:modsym-ldp}
  \end{equation}
\end{theorem}

We will deduce Theorems~\ref{thm:modsymbol} and \ref{thm:modsymbol-LD} from Theorem~\ref{th:distrib-qmf} by showing that the map~$x\mapsto \syb{x}_{f,\frac k2}$ is a weight~$0$ QMF with the period function~$h_\sigma$~\eqref{eq:recip-hsigma} being Hölder-continuous on~$[0,1]$. We will estimate all small exponential moments, from which the Large Deviations Principle will follow. 

Theorem~\ref{thm:modsymbol} is related to three works, two of which have been carried out around the same time as the present.

\begin{itemize}
  \item The work of Petridis and Risager~\cite{PetridisRisager2018} was concerned with the case of weight~$k=2$ forms, which is relevant to the conjectures of Mazur-Rubin and Stein (cusp forms associated with elliptic curves). Their method is based on the analysis of twisted Eisenstein series. Although the authors do not seem to mention it, their method is capable of obtaining a Large Deviations Principle. The assumption $k=2$ is however crucial in their approach, since only in this case the analogues of the function~$h_\sigma$ in~\eqref{eq:recip-hsigma} are constant (this translates into the independence of the multiplier system defined in~\cite[p.~7]{PetridisRisager2018} with respect to~$z_0$).

  \item Very recently, Nordentoft~\cite{Nordentoft2018} has obtained the Central Limit Theorem for arbitrary weight~$k\geq 2$ and level~$N\geq 1$ modular forms. Specializing to~$N=1$, this gives in particular an independent proof of Theorem~\ref{thm:modsymbol}. Similarly as~\cite{PetridisRisager2018}, the spectral analysis of twisted Eisenstein series play a central rôle, however his argument is based on a completely different construction (due to the ``independence with respect to~$z_0$'' obstruction mentioned above). As a result, it crucially relies on the consideration of moments, and falls short of establishing a Large Deviations Principle (Theorem~\ref{thm:modsymbol-LD}).

  \item Also very recently, Lee and Sun~\cite{LeeSun2018} have independently obtained a proof of the main theorem of~\cite{PetridisRisager2018}, for weight~$k=2$, by a method similar to the one we pursue here. This is achieved by considering a certain twisted version of the Gauss map, keeping track at each iteration of a coset in~$\Gamma_0(N)\backslash SL_2(\Z)$. On the other hand, for~$k=2$ the period functions (the analogues of~$h_\sigma$ in~\eqref{eq:recip-hsigma}) are constant. The technical difficulties they are faced with are of very different nature from the ones we encounter here. By mixing the methods presented here with those of~\cite{LeeSun2018}, it is plausible that the main results of~\cite{Nordentoft2018}, for arbitrary weight and levels, could be recovered by dynamical methods.
\end{itemize}

The difference in the magnitude of the variance between Theorems~\ref{thm:estermann} and \ref{thm:modsymbol-LD} is noteworthy, and is due to the presence, or not, of a pole the associated $L$-functions ($\zeta^2(s)$ and $L(f, s)$ respectively) at~$s=1$.

We have considered only the modular symbols~$\syb{x}_{f, m}$ at the central value~$m=k/2$. The behaviour for~$m\neq k/2$ is very different and simpler: for instance, when~$m>k/2$, the series~$L(f, x, m)$ extends to a continuous function of~$x\in \R$, and the values~$\{\syb{x}_{f, m}, x\in \Omega_Q\}$ become distributed according to the push-forward~$(L(f,\cdot,m))_*(\df\nu)$ of the Lebesgue measure. We return to this question in more details in~\cite{appA}.

The original conjectures of Mazur-Rubin and Stein were concerned with the case of a single~$q$. There does not seem to be an effective way, with our method or those of~\cite{PetridisRisager2018,Nordentoft2018,LeeSun2018}, to isolate rationals of fixed denominators, whence all results proven so far rely on the extra average over~$q$.

\subsection{Kashaev invariants of the~$4_1$ knot and sums of continued fraction coefficients}

Our next application is motivated by a question in~\cite{Zagier2010}.
To a knot~$K$ and an integer $n \geq 2$, we associate the $n$-colored Jones polynomial~\cite[section~1]{Garoufalidis2018}, which is a Laurent polynomial $J_{K, n}(\mathbf{q}) \in\Z[[\mathbf{q}]]$ defined by a combinatorial construction through a diagram representation of~$K$. For any root of unity $\mathbf{q}\in\C$, we define~$J_{K,0}(\mathbf{q}) := J_{K,n}(\mathbf{q})$ where~$n\geq 1$ is the order of~$\mathbf{q}$. In~\cite{MurakamiMurakami2001}, it was shown that the sequence~$(J_{K, 0}(\e^{2\pi i/n}))_{n\geq 1}$ is the Kashaev invariant of~$K$~\cite{Kashaev1995} (the full function can be reconstructed by the action of the Galois group).
In the case of the $4_1$ knot (or ``figure-eight'' knot), the simplest hyperbolic knot, we have explicitely~\cite{Zagier2010}
\begin{equation*}
  \J(x) := J_{4_1,0}(\e(x)) = \sum_{m=0}^{\infty}|1-\e^{2\pi i x}|^2\cdots|1-\e^{2\pi i m x}|^2,\qquad (x\in\Q).
\end{equation*}
Note that for each given~$x\in\Q$, the sum is finite. In this case, Zagier's modularity conjecture, stated precisely in~\cite{Zagier2010}, predicts that~$x\mapsto \log \J(x)$ is a weight~$0$ QMF: the difference
\begin{equation*}
  h(x) := \log \J(-1/x) - \log \J(x),
\end{equation*}
which is depicted in~\cite[Fig.~4]{Zagier2010} is expected to behave ``nicely'' with respect to~$x$, although not continuously. Conjecturally, we expect~$h(x) \sim C/x$ as~$x\to 0$, where~$C ={\rm Vol}(4_1)/2\pi$ and~${\rm Vol}(4_1) = 2.02\dotsb$ is the hyperbolic volume of~$4_1$; Kashaev's volume conjecture is the case~$x = 1/n$, $n\in \N$, which is known in this case~\cite{AndersenHansen2006}.

A proof of Zagier's conjecture for the~$4_1$ knot has been announced by Garoufalidis and Zagier~\cite{GaroufalidisZagier}. In~\cite{BettinDrappeaua}, we obtained independently another proof, complemented by a reciprocity formula relative to a transformation of another kind,
by which we deduced the following asymptotic estimate: for $\lambda\geq 0$, denote
\begin{equation}
  \Sigma_\lambda(x) = \sum_{j=1}^{r(x)} a_j^\lambda, \qquad x \in \Q\cap (0, 1],\quad x = [0; a_1, \dotsc, a_{r(x)}],\ a_{r(x)}>1,\label{eq:def-Sigmalambda}
\end{equation}
the sum of continued fraction coefficients of~$x$. Then by~\cite{BettinDrappeaua}, we have
\begin{equation}
  \log \J(x) \sim C \Sigma_1(x)\qquad \text{as} \qquad \frac{\Sigma_1(x)}{r(x)} \to \infty.\label{eq:rel-J-sigma}
\end{equation}
This is in accordance with the conjectured behaviour of~$h(x)$ as~$x\to 0$.

The map~$x\mapsto \Sigma_\lambda(x)$, suitably extended to~$\Q$, is a weight~$0$ QMF with associated period function~\eqref{eq:recip-hsigma} satisfying~$h_\sigma(x) = C\floor{1/x}^\lambda$ for~$x\in \Q\cap (0, 1]$. Using Theorem~\ref{th:distrib-qmf}, we compute in Theorem~\ref{th:m_lambda} below the distribution of $\Sigma_\lambda$ for all $\lambda\geq0$, extending work of~\cite{BaladiVallee2005,Baladi.Hachemi2008}.
In particular, in the case $\lambda=1$ we obtain the following result.

\begin{theorem}\label{th:moment-1}
  Let~$G_1$ denote the cumulative distribution function of the stable law~$S_1(\frac6\pi, 1, 0)$, that is~$G_1(v) := \int_{-\infty}^v g_1(x) \df x$ where
  $$ g_1(x) = \frac1{2\pi} \int_{-\infty}^\infty \e^{-itx}\e^{-\frac6\pi \abs{t} - \frac{12}{\pi^2} i t \log \abs{t}} \df t, $$
  and let~$\gamma_0$ be the Euler--Mascheroni constant. Then, as~$Q\to \infty$,
  \begin{equation}
    \P_Q\Big(\frac{\Sigma_1(x)}{\log Q} - \frac{\log\log Q - \gamma_0}{\pi^2/12} \leq v\Big) = G_1(v) + O\Big(\frac1{(\log Q)^{1-\eps}}\Big).\label{eq:discrete-heinrich}
  \end{equation}
\end{theorem}

Theorem~\ref{th:moment-1} answers a question in~\cite{FlajoletEtAl}, and echoes a result of Heinrich~\cite{Heinrich1987} which obtained a similar convergence for Lebesgue-almost all~$x\in[0, 1]$. In particular,~\eqref{eq:discrete-heinrich} implies that the generic time complexity of the substractive algorithm for the GCD is asymptotically~$(1+o(1))\frac{12}{\pi^2}(\log Q) \log \log Q$ on pairs of coprime numbers at most~$Q$, as~$Q\to\infty$. This is in sharp contrast with the average time complexity, which is~$(1+o(1))\frac{6}{\pi^2}(\log Q)^2$ (the latter is known even with a single average over numerators, see~\cite{YaoKnuth1975}). Note that, contrary to the case of the Estermann function (Theorem~\ref{thm:estermann}), this discrepancy between the typical size and the average size is consistent with fact that the stable law~$S_1(\frac6\pi, 1, 0)$ has a divergent first moment.

Combined with~\eqref{eq:rel-J-sigma}, Theorem~\ref{th:moment-1} implies the following law of large numbers for the values of~$\log \J(x)$.

\begin{corollary}\label{cor:LLN-J}
  As~$Q\to \infty$, we have
  \begin{equation*}
    \log \J(x) \sim \tfrac{12}{\pi^2} C \log Q \log\log Q
  \end{equation*}
  for a proportion~$1-o(1)$ of fraction~$x\in\Q\cap(0, 1]$ of denominators at most~$Q$.
\end{corollary}

The typical size of~$\log \J(x)$ among fractions~$x$ with~$\denom(x)\leq Q$ is therefore much less than that of~$\log \J(1/Q) \sim C Q$ (by~\eqref{eq:rel-J-sigma}).

We expect a convergence in law analogous to~\eqref{eq:discrete-heinrich} for the values~$\log\J(x)$, however the error term which we obtain in~\eqref{eq:rel-J-sigma} is not precise enough to carry this out. This issue is discussed more precisely in~\cite{BettinDrappeaua}, along with the case of other knots.

\subsection{Dedekind sums}

For all coprime integers~$a$ and~$q$ with~$q\geq 1$, the Dedekind sum
$$ s\Big(\frac aq\Big) := \sum_{h=1}^{q-1} \syf{\frac{ha}q} \syf{\frac hq}, \qquad \syf{x} := \begin{cases}  \{ x \} - 1/2 & (x \not\in \Z), \\ 0 & (\text{otherwise}) \end{cases} $$
is a rational number of modulus at most~$q/12+1/2$. They appear naturally in the multiplier system attached to the Dedekind~$\eta$ function; we refer to the monograph~\cite{Rademacher1972} for further properties and references.
The value distribution of~$s(x)$ on average over rational~$x$ has been studied from several points of view~\cite{Hickerson1977,Vardi1987,Bruggeman1990,Vardi1993}.

From Theorem~\ref{th:distrib-qmf}, we will deduce a short proof of the following convergence to a Cauchy law, which is the main result of~\cite{Vardi1993}.
\begin{theorem}[\cite{Vardi1993}]\label{th:dedekind}
  Uniformly for~$v\in\R$ and~$Q\geq 2$, we have
  \begin{equation}
    \P_Q\Big(\frac{s(x)}{\log Q} \leq \frac{v}{2\pi} \Big) = \frac1\pi \int_{-\infty}^{v} \frac{\df y}{1+y^2} + O\Big(\frac1{(\log Q)^{1-\eps}}\Big).\label{eq:estim-dedekind}
  \end{equation}
\end{theorem}

This statement will easily follow from Theorem~\ref{th:distrib-qmf} by noting that~$x \mapsto s(x)$ is a weight~$0$ QMF, with associated period function~$h_\sigma$~\eqref{eq:recip-hsigma} being roughly $\floor{1/x} $.\footnote{Notice that we obtain a different distribution than in Theorem~\ref{th:moment-1}. This is due to the fact in that case the period relation is more precisely $f(x)+f(-1/x)=\floor{1/x} $ rather than $f(x)-f(-1/x)=\floor{1/x}$.}
This last fact is a consequence of the reciprocity formula for~$s(x)$~\cite[Chapter~2, Theorem~1]{Rademacher1972}.

The proof of~\cite{Vardi1993} builds on the close connection between~$s(x)$ and the multiplier system associated to the~$\eta$ function, which brings the problem to an analysis of twisted Poincaré series on the modular surface, which are in turn studied by means of the Kuznetsov trace formula. By contrast, our arguments are dynamical in nature, and use little arithmetic information beyond the group structure of~$SL(2,\Z)$.

\section{Overview}

\subsection{Reduction to dynamical analysis}

We now overview the arguments underlying Theorem~\ref{th:distrib-qmf}. Suppose~$f: \Q \to \C$ is such that the functions defined through~\eqref{eq:qmf-mod} satisfy~$h_\tau = 0$ and~$h_\sigma$ extends to a continuous function on~$(0, 1]$.
To simplify the presentation, in this section only, we assume that~$f$ is an even function.

The starting point is the remark that the action of~$\sigma$ and~$\tau$ on~$\P^1(\Q)$ can be induced into an expanding Markov map on~$(0, 1]$, namely the Gauss map~$T:x \mapsto \{1/x\}$, where the braces denote the fractional part. More precisely, by periodicity and since~$f$ is even, we have~$f(x) = h_\sigma(x) + f(\{1/x\})$, which after iteration (Euclid's algorithm) yields\footnote{Without the assumption $h_\tau=0$ a similar formula holds: we have in general
$f(x) = \sum_{j=1}^{r(x)} H(T^{j-1}(x)) + f(0),$
where $H(x):=h_\sigma(x)-\sum_{i=1}^{\floor{1/x}}h_\tau(1/x-i)$ for $x\in(0, 1]$, and the ensuing analysis still applies.\label{fn:general_h}}

\begin{equation}
f(x) = \sum_{j=1}^{r(x)} h_\sigma(T^{j-1}(x)) + f(0) \qquad (x\in (0, 1])\label{eq:f-sumh}
\end{equation}
where~$r(x) \geq 0$ is minimal subject to~$T^{r(x)}(x) = 0$.

This expression relates the values of~$f$ to Birkhoff sums associated with the Gauss map~$T$. This map is known to have good mixing properties~\cite[p.~174]{CornfeldEtAl1982}, by which we expect the sum~\eqref{eq:f-sumh} to behave like a sum of independent random variables.
Precise limiting behaviour may be obtained through the study of spectral properties of transfer operators associated with~$T$: this is an important theme in smooth dynamics and stationary Markov chains~\cite{Doeblin1940,Fortet1940,IonescuTulceaMarinescu1950,Wirsing1973/74,Rousseau-Egele1983,Broise1996,AaronsonDenker2001}. We refer in particular to~\cite{Broise1996} and to the introduction of~\cite{AaronsonDenker2001} for an extensive historical account and references.
Among maps of the interval, the Gauss map has been particularly studied because of its link with the analysis~\cite{Mayer1976,Pollicott1986,Mayer1991a,MomeniVenkov2012} of geodesic flows on the surface~$SL(2,\Z)\backslash {\mathfrak h}$ (where~${\mathfrak h}$ is the upper-half plane).

A prominent example is given by~\cite[Theorem~8.1]{Broise1996}: suppose~$\phi:[0,1]\to\R$ is of bounded variations and not of the shape~$c + k - k\circ T$, for some function~$k$ of bounded variations and some constant~$c\in\R$, and let
$$ S_N(\phi, x) := \sum_{j=1}^N \phi(T^j(x)). $$
Then for some constants~$\mu_\phi\in\R$ and~$\sigma_\phi\in\R_+^*$, uniformly for~$v\in\R$,
\begin{equation*}
  \P\Big(\frac{S_N(\phi, x) - \mu_\phi N}{\sigma_\phi \sqrt{N}} \leq v\Big) = \Phi(v) + O\Big(\frac1{\sqrt{N}}\Big), \qquad \Phi(v) := \int_{-\infty}^v \frac{\e^{-t^2}\df t}{\sqrt{2\pi}} 
\end{equation*}
as $N\to\infty$, where~$x\in(0,1)$ is chosen uniformly according to the Lebesgue measure. The implied constant may depend only on~$\phi$. This relies on the spectral analysis of perturbations of the Gauss-Kuzmin-Wirsing transfer operator
$$ \H[f](x) = \sum_{n\geq 1} \frac1{(n+x)^2}f\Big(\frac1{n+x}\Big). $$

We are interested in the case when~$x$ is a rational chosen at random in~$\Omega_Q$, and the Birkhoff sum in~\eqref{eq:f-sumh} is over the full orbit, its length varying with~$x$. Denote
\begin{equation}
  S_\phi(x) := \sum_{j=1}^{r(x)} \phi(T^{j-1}(x)).\label{eq:def-Sphi}
\end{equation}
In~\cite{Vallee2000}, Vallée has shown that the expectations of~$S_\phi(x)$ satisfy
$$ \E_Q(S_\phi(x)) = \mu_\phi \log Q + \nu_\phi + O(Q^{-\delta}) $$
for functions~$\phi$ which are constant on each interval~$(\frac1{n+1}, \frac1n)$ ($n\geq 1$) and under some growth condition at~$0$. Here the number~$\delta>0$ is absolute and the implied constant may depend on~$\phi$. The numbers~$\mu_\phi, \nu_\phi$ depend only on~$\phi$, and in fact
$$ \mu_\phi = \frac{12\log 2}{\pi^2}\int_0^1 \phi(x)\xi(x)\df x, \qquad \xi(x) := \frac1{(1+x)\log 2}. $$
The quantity~$\mu_\phi$, and the measure~$\xi(x)\df x$, are essentially the projection of~$\phi$ on the eigenspace of~$\H$ associated to its dominant eigenvalue~$1$.
An important point is that this question was studied within the framework of dynamical methods. The argument uses the construction of a suitable generating series involving the quasi-inverse~$(\Id-\H_\tau)^{-1}$ of twisted transfer operators 
\begin{equation}
  \H_{\tau}[f] = \sum_{n\geq 1} \frac1{(n+x)^{2+i\tau}}f\Big(\frac1{n+x}\Big),\label{eq:GKW-twist}
\end{equation}
for arbitrary large~$\tau\in\R$ (by contrast with the continuous setting when~$x$ was chosen uniformly at random in~$[0, 1]$, which involves perturbations of a single fixed operator). This construction crucially relies on the fact that the denominator~$q(x)$ of~$x$ can be detected by means of the Birkhoff sum~$\log q(x) = -\sum_{j=1}^{r(x)} \log(T^{j-1}(x))$. Earlier approaches~\cite{Heilbronn1969,Dixon1970}, restricted to~$\phi=1$, involved number-theoretic methods based on bounds on algebraic exponential sums.

The approach of~\cite{Vallee2000} was developed by Baladi and Vall\'{e}e~\cite{BaladiVallee2005}, who proved that for~$\phi$ constant on each interval of the shape~$(\frac1{n+1}, \frac1n)$ and under a logarithmic growth condition~$\phi(x) = O(\log(2/x))$, the Laplace transform satisfies the ``quasi-powers expansion''
\begin{equation}
  \E_Q(\e^{w S_\phi(x)}) = \exp\big\{ U(w) \log Q + V(w) + O(Q^{-\delta}) \big\}\label{eq:BV-laplace}
\end{equation}
for~$w$ in some complex neighborhood of~$0$; the holomorphic functions~$U,V$, the number~$\delta>0$ and the implied constant may depend on~$\phi$. By Hwang's theorem~\cite{Hwang1996}, this has a number of consequences in terms of the limiting distribution, and among them, an effective central limit theorem: if~$\phi$ is real, non identically zero and as above, then for some numbers~$\mu_\phi\in\R$, $\sigma_\phi>0$ and all~$v\in\R$ we have
$$  \P_Q\Big(\frac{S_\phi(x) - \mu_\phi \log Q}{ \sigma_\phi (\log Q)^{1/2} } \leq v \Big) = \int_{-\infty}^v \frac{\e^{-t^2/2}\df t}{\sqrt{2\pi}} + O\Big(\frac1{\sqrt{\log Q}}\Big) \qquad (t\in\R). $$
The power-saving error term in~\eqref{eq:BV-laplace} depends on proving a pole-free strip for the quasi-inverse of the twists~\eqref{eq:GKW-twist}, which can be viewed as a ``quasi-Riemann hypothesis'' for the generating function of interest\footnote{The same set of idea imply a zero-free strip for the Selberg zeta function of the full modular group~\cite{Naud2005}.}. The approach of~\cite{BaladiVallee2005} extends seminal work of Dolgopyat~\cite{Dolgopyat1998} to the case of an expanding interval map with an infinite partition (see also~\cite{BaladiVallee2005a}).

Motivated by our application~\eqref{eq:f-sumh}, we extend the methods of~\cite{BaladiVallee2005} in two directions. In the first direction, we consider cost functions~$\phi:(0,1]\to\R$ which are not necessarily constant by parts. We will require that~$\phi$ can be extended to a Hölder continuous function, with some uniform exponent, on each interval~$[\tfrac1{n+1},\tfrac1n]$.

The second direction we wish to consider is cost functions~$\phi(x)$ having a possibly divergent first or second moment, say~$\phi(x) = x^{-1/2}$, or~$\phi(x) = \floor{1/x}$, with the consequence that the limit law will be stable, but not necessarily Gaussian anymore~\cite[p.~384]{BaladiVallee2005}.  This is a well-known theme in the theory of sums of independent random variables; see Chapter~VI of~\cite{Levy1925}, or Chapters~VI.1 and~XVII.5-6 of~\cite{Feller1971}. The corresponding phenomenon for sums of continued fractions coefficients in the continuous setting (Lebesgue-almost all~$x\in (0, 1)$) has been investigated by elementary means by Lévy~\cite{Levy1952} (see also~\cite{Heinrich1987}), and later by transfer operator methods~\cite{GuivarchHardy1988,GuivarchLeJan1993,Szewczak2009,AaronsonDenker2001}. In fact, this falls into the general ``countable Markov-Gibbs'' framework of~\cite{AaronsonDenker2001}, where it is refered to as the ``distributional limit'' problem. A large part of later work has focused on non-uniformly hyperbolic maps; we refer to the survey~\cite{Goueezel2015} and the references therein.

We investigate the corresponding question in the discrete setting ($x$ at random in~$\Omega_Q$). We evaluate with effective, power-saving error terms the characteristic function
$$ \E_Q(\e^{it S_\phi(x)}) $$
for~$t$ in a real neighborhood of~$0$, under hypotheses which essentially reduce to the boundedness of some positive absolute moment,~$\int_0^1 \abs{\phi(x)}^{\alpha_0}\df x<\infty$ for some $\alpha_0>0$.

\subsection{Statement of the main distributional result}

For technical reasons, we will work in a more general setting. For a parameter~$\kappa\in[0, 1]$, a real interval~$\I$ and a normed space~$X$ let~$\Hol^\kappa(\I, X)$ denotes the set of functions~$\I\to X$ such that the Hölder semi-norm
\begin{equation}
  \HN{\kappa}{f} := \sup_{x, y\in \I, x\neq y} \frac{\norm{f(x)-f(y)}}{\abs{x-y}^\kappa}\label{eq:def-holdernorm}
\end{equation}
is finite.

We let~$\cH := \{ h:[0,1]\to\R \mid \exists n\geq1 \text{ s.t. }  h(x) =\frac1{n+x}\}$ be the inverse branches of the Gauss map, and~$\cH^\ell := \{h_1 \circ \dotsb \circ h_\ell \mid h_j \in\cH\}$.

We fix an integer~$m\geq 1$, and~$m$ functions~$\vphi_1, \dotsc, \vphi_m :[0,1]\to\R^d$. We extend this definition by periodicity, letting~$\vphi_j := \vphi_{j\mod{m}}$ for all~$j\geq 1$, and we define a function on~$\Q\cap(0,1]$ by letting~$S(1) := 0$ and
$$ S(x) := \sum_{j=1}^{r(x)} \vphi_j(T^{j-1}(x)), \qquad (x\in\Q\cap(0,1)) $$
where we recall that
$$ r(x) := \min\{j\geq 0, T^j(x) = 0\}. $$
For~$x\in(0,1)$ with~$r(x)\geq m$, we define
\begin{equation}
  \vphi(x) := \sum_{j=1}^{m} \vphi_j(T^{j-1}(x)),\label{eq:def-vgamtot}
\end{equation}

We make the following hypotheses:
\begin{enumerate}
  \item ($\kappa_0$-Hölder continuity) For each~$n\geq 1$ and~$1\leq j \leq m$, the function~$\vphi_j$ can be extended as a~$\kappa_0$-Hölder continuous function on the interval~$[\frac1{n+1},\frac1n]$.
  \item (Norm~$\alpha_0$-th moment) For each~$1\leq j\leq m$, we have
  \begin{equation}
    \sum_{h\in\cH} \abs{h'(0)} \norm{\vphi_j|_{h(\I)}}_{\infty}^{\alpha_0} < \infty.\label{eq:bound-norm-rough}
  \end{equation}
  \item (Hölder~$\lambda_0$-th moment) For each~$1\leq j\leq m$, we have
  \begin{equation}
    \sum_{h\in\cH} \abs{h'(0)} \HN{{\kappa_0}}{\vphi_j|_{h(\I)}}^{\lambda_0} < \infty.\label{eq:bound-Ckappa-rough}
  \end{equation}
\end{enumerate}

For all~${\bm t}\in\R^d$, we denote~$\nvt:=\nvt_\infty$. Finally, let
\begin{equation*}  
  \II_\vphi(\vt) := \int_0^1 (\e^{i\langle\vt, \vphi(x)\rangle}-1)\xi(x)\df x.
\end{equation*}

\begin{theorem}\label{th:main-general}
  Let~$\kappa_0, \alpha_0, \lambda_0>0$ be given with~$\kappa_0, \lambda_0 \leq 1$, and~$\vphi:(0,1]\to\R^d$ satisfying the conditions~\eqref{eq:bound-norm-rough}--\eqref{eq:bound-Ckappa-rough}. There exists~$t_0>0$, $\delta>0$, and functions~$U, V: \{\vt\in\R^d, \nvt \leq t_0\}\to\C$ such that for all~$\vt\in\R^d$ with~$\nvt\leq t_0$, we have
  \begin{equation}
    \E_Q\big(\e^{i\langle\vt, S(x)\rangle}\big) = \exp\big\{U(\vt)\log Q + V(\vt) + O(Q^{-\delta})\big\},\label{eq:asympt-expect-UV}
  \end{equation}
  and
  \begin{align*}
    U(\vt) = {}& \frac{12\log 2}{m\pi^2}\II_\vphi(\vt) + O(\nvt^2+\nvt^{2\alpha_0-\eps}), \numberthis\label{eq:def-psi-1} \\
    V(\vt) = {}& O(\nvt+\nvt^{\alpha_0-\eps}).
  \end{align*}
  If moreover~$\alpha_0>1$, then there exists a real~$d\times d$ matrix~$C_\vphi$ such that
  \begin{equation}
    U(\vt) = \frac{12\log 2}{m\pi^2}\II_\vphi(\vt) + \vt^T C_\vphi \vt + O(\vt^3+\abs{\vt}^{1+\alpha_0-\eps})\label{eq:thmgen-secondterm}
  \end{equation}
  with~$\vt$ interpreted as a column vector and~$\vt^T$ its transpose. The numbers~$\delta, t_0$ and the implied constant depend at most on~$\alpha_0, \kappa_0, \lambda_0, \eps$ and on an upper-bound for the left-hand sides of~\eqref{eq:bound-norm-rough}--\eqref{eq:bound-Ckappa-rough}.
\end{theorem}

\begin{remark}
  \begin{itemize}
    \item It is important to note that the actual values of~$\kappa_0$ and~$\lambda_0$ only affect the statement up to the value of~$t_0, \delta$ and the implied constants. In particular, if~$\vphi$ is~$\CC^1$ on each interval~$[\frac1{n+1},\frac1n]$ and if there exists~$C\geq 1$ such that~$\|\df_x \vphi\| = O(x^{-C})$ for~$x\in(0, 1]$, then~\eqref{eq:bound-Ckappa-rough} is satisfied with~$\kappa_0 = 1$ and any~$\lambda_0<1/C$.
    \item In our applications, we will have~$\|\vphi(x)\| \asymp \|\vphi(y)\|$ for all~$x, y$ in~$h(\I)$, uniformly in~$h\in\cH^m$. In this situation,~\eqref{eq:bound-norm-rough} is equivalent to~$\int_0^1 \|\vphi(x)\|^{\alpha_0}\df x < \infty$.
    \item In typical applications, the quantity~$\II_\vphi(\vt)$ can be evaluated by standard methods, and the results relevant for the present paper are worked out in~\cite{appB}.
    \item The results of~\cite{BaladiVallee2005} are stated in a formalism which includes the Gauss map as a special case. The most crucial assumption, at least as far as one is interested in power-saving error terms, is the ``uniform non-integrability'' assumption~\cite[p.357]{BaladiVallee2005} (see~\cite{Morris2015} for a qualitative result, not using the UNI condition). In the present work, we do not use any more specific properties of the Gauss map; with suitable modifications, the arguments presented here apply to the centered and odd Euclidean algorithms~\cite[Fig. 1]{BaladiVallee2005} as well.
    \item A generalization in a different direction of Baladi-Vallée's results, for maps associated to a reduction algorithm in congruence subgroups, has very recently and independently been obtained by Lee and Sun~\cite{LeeSun2018}\footnote{Another way to study levels~$N>1$ would be by building an expanding map out of Atkin-Lehner homographies, but our attempts to construct such a map satisfying the UNI condition were not successful.}.
  \end{itemize}
\end{remark}

The main feature of Theorem~\ref{th:main-general} we use is the relative weakness of the hypotheses on~$\vphi$. This is important in our applications, where often little is known on~$\vphi$ besides the regularity properties, and rough bounds on the Hölder norms. To obtain this uniformity, we systematically use Hölder spaces, not only because of the regularity of~$\vphi$, but also in order to dampen the oscillations of~$\e^{i\langle \vt, \vphi\rangle}$ (see~\eqref{eq:Ckappa-exp-if} below). This is the main reason why arbitrarily small values of~$\lambda_0$ are admissible for Theorem~\ref{th:main-general}.

The shape of the asymptotic estimates~\eqref{eq:asympt-expect-UV}--\eqref{eq:thmgen-secondterm} are characteristic of infinitely divisible distribution~\cite[Chapter~XVII.2]{Feller1971}. The informally stated Theorem~\ref{th:distrib-qmf} follows, in all the cases we consider, by an evaluation of~$\II_\vphi(\vt)$ and the Berry-Esseen inequality~\cite[Theorem~II.7.16]{Tenenbaum2015a}, \cite[Theorem~IX.5]{FlajoletSedgewick2009}.

The variety of situations we consider prevents us from making a clear and concise set of hypotheses on~$h_\sigma$ which would make Theorem~\ref{th:distrib-qmf} rigorous. Besides working out in details the applications, we restrict to illustrating the case~$m=1$ and~$\alpha_0>2$ by the following Central Limit Theorem for the Birkhoff sums~$S_\phi$ defined in~\eqref{eq:def-Sphi}, which recovers and extends in particular~\cite[Theorem~3.(a)]{BaladiVallee2005} (see also~\cite[Remark~1.3]{Baladi.Hachemi2008}). We recall that~$\Phi$ is the cumulative distribution function of the standard normal law.

\begin{corollary}\label{cor:CLT}
  Assume that the bound
  $$ \sum_{n\geq 1}\frac1{n^2} \Big( \sup_{x\in[\frac1{n+1},\frac1n]}\abs{\phi(x)}^{\alpha_0} + \sup_{x, y\in [\frac1{n+1},\frac1n]} \frac{\abs{\phi(x)-\phi(y)}^{\lambda_0}}{\abs{x-y}^{\kappa_0\lambda_0}}\Big) < \infty. $$
  holds for some~$\alpha_0>2$ and~$\lambda_0, \kappa_0>0$. Suppose that~$\phi$ is not of the form~$c\log{} + f - f\circ T$ for a function~$f:[0,1]\to\R$ and~$c\in\R$. Then, for some~$\sigma>0$ and
  \begin{equation}
    \mu = \frac{12}{\pi^2}\int_0^1\frac{\phi(x)\df x}{1+x},\label{eq:def-mean}
  \end{equation}
  we have
  \begin{equation}
    \P_Q\Big(\frac{S_\phi(x) - \mu \log Q}{\sigma\sqrt{\log Q}} \leq v\Big) = \Phi(v) + O\Big(\frac1{\sqrt{\log Q}} + \frac1{(\log Q)^{\alpha_0/2-1-\eps}}\Big)\label{eq:CLT-convlaw}
  \end{equation}
  uniformly in~$v\in\R$.
\end{corollary}

A variation on the argument shows that the milder hypothesis~$\alpha_0=2$ implies the estimate~\eqref{eq:CLT-convlaw} with a qualitative error term~$o(1)$ as~$Q\to\infty$.
For any~$k\in\N_{>0}$, under the condition~$\alpha_0>k$, a variation on the arguments also leads to an estimate with power-saving error term for~$\E_Q(S_\phi(x)^k)$.

\subsection*{Acknowledgments}
This paper was partially written during a visit of of S. Bettin at the Aix-Marseille University and a visit of S. Drappeau at the University of Genova. The authors thank both Institution for the hospitality and Aix-Marseille University and INdAM for the financial support for these visits. The authors thank B. Vallée for help regarding the references, J. Marklof and B. Borda for discussions, and the anonymous referees for her or his remarks.

S. Bettin is member of the INdAM group GNAMPA and his work is partially supported by PRIN 2017 ``Geometric, algebraic and analytic methods in arithmetic''.

\subsection*{Notations}

For any function~$f(s, t)$ of two real or complex variables, and all~$k,\ell\geq 0$, we let~$\partial_{k,\ell}f := \frac{\partial^{k+\ell}}{\partial s^k \partial t^\ell} f$ whenever this function is defined.

We recall that the semi-norm~$\norm{f}_{(\kappa)}$ is defined in~\eqref{eq:def-holdernorm}. The Landau symbol~$f = O(g)$ means that there is a constant~$C\geq 0$ for which~$\abs{f} \leq C g$ whenever~$f$ and~$g$ are defined. The notation~$f \ll g$ means~$f = O(g)$. If the constant depends on a parameter, say~$\eps$, this is indicated in subscript, \emph{e.g.}~$f = O_\eps(g)$ or~$f \ll_\eps g$.

\section{Lemmas}

\subsection{Hölder constants}

We compile here several facts we will use on the Hölder norms~$\HN{\kappa}{f}$ for~$f\in\Hol^\kappa(\I, \C)$.
\begin{enumerate}
  \item For~$f, g\in\Hol^\kappa$, we have
  \begin{equation}
    \HN{\kappa}{fg} \leq \HN{\kappa}{f}\norm{g}_\infty + \norm{f}_\infty \HN{\kappa}{g}.\label{eq:Ckappa-product}
  \end{equation}
  This follows by writing~$fg(x) - fg(y) = (f(x)-f(y))g(x) + f(y)(g(x)-g(y))$.
  \item For~$g\in\Hol^1$ and~$f\in\Hol^\kappa(g(\I))$, we have~$f\circ g\in\Hol^\kappa$ and
  \begin{equation}
    \HN{\kappa}{f\circ g}\leq \HN{1}{g}^\kappa \HN{\kappa}{f|_{g(\I)}}.\label{eq:Ckappa-composition}
  \end{equation}
  This follows by writing~$\frac{|f(g(x))-f(g(y))|}{|x-y|^\kappa} = \big|\frac{g(x)-g(y)}{x-y}\big|^\kappa \frac{|f(g(x))-f(g(y))|}{|g(x)-g(y)|^\kappa}$.
  \item For~$\lambda\in[\kappa, 1]$ and $f\in\Hol^{\kappa/\lambda}$ real, we have
  \begin{equation}
    \HN{\kappa}{\e^{if}} \leq \HN{{\kappa/\lambda}}{f}^{\lambda}.\label{eq:Ckappa-exp-if}
  \end{equation}
  This follows by writing~$\frac{|\e^{if(x)}-\e^{if(y)}|}{|x-y|^\kappa} = \frac{|\e^{i(f(x)-f(y))}-1|}{|x-y|^\kappa} \leq \frac{|f(x)-f(y)|^\lambda}{|x-y|^\kappa}$.
  \item For~$0< \kappa < \lambda$ and~$f\in\Hol^\lambda$, we have
  \begin{equation}
    \label{eq:Ckappa-morediff}
    \HN{\kappa}{f} \leq \HN{0}{f}^{1-\kappa/\lambda} \HN{\lambda}{f}^{\kappa/\lambda}.
  \end{equation}
  This follows by writing~$\frac{|f(x)-f(y)|}{|x-y|^\kappa} = |f(x)-f(y)|^{1-\kappa/\lambda} \big(\frac{|f(x)-f(y)|}{|x-y|^\lambda}\big)^{\kappa/\lambda}$.
\end{enumerate}

\subsection{Oscillating integrals}

We will require the following analogue of van der Corput's lemma. We let
$$ \I = [0, 1]. $$
\begin{lemma}\label{lem:vdC}
  Let~$\Delta, \kappa>0$. Assume that~$\Psi:\I\to\R$ is~$\CC^1$ with~$\Psi' \geq \Delta$ and that~$\Psi'$ is monotonous on~$\I$. Let~$g\in\Hol^\kappa$. Then
  $$ \int_0^1 g(x)\e^{i\Psi(x)} \df x \ll \frac{\norm{g}_\infty}{\Delta} + \frac{\HN{\kappa}{g}}{\Delta^\kappa} . $$
\end{lemma}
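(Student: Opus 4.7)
The plan is to reduce to the smooth case by mollification and then integrate by parts, exploiting the monotonicity of $\Psi'$ to control the derivative of $1/\Psi'$. Without loss of generality I would assume $\Delta \geq 1$, since if $\Delta < 1$ the trivial bound $\bigl|\int_0^1 g e^{i\Psi}\bigr| \leq \|g\|_\infty \leq \|g\|_\infty/\Delta$ already suffices.

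First I would extend $g$ to a $\kappa$--Hölder function on $\R$ (for instance by constant extension beyond $0$ and $1$), preserving $\|g\|_\infty$ and $\HN{\kappa}{g}$ up to constants. Fix a nonnegative $\rho \in \CC^\infty_c(\R)$ with $\int \rho = 1$ and $\mathrm{supp}(\rho) \subset [-1,1]$, and set $\rho_\eta(y) = \eta^{-1}\rho(y/\eta)$ for a parameter $\eta \in (0,1]$ to be chosen. Let $g_\eta := g * \rho_\eta$. Using $\int \rho'_\eta = 0$, one has the two key estimates
\begin{equation*}
\|g - g_\eta\|_\infty \ll \HN{\kappa}{g}\,\eta^\kappa, \qquad \|g'_\eta\|_\infty \leq \HN{\kappa}{g}\!\int |y|^\kappa |\rho'_\eta(y)|\df y \ll \HN{\kappa}{g}\,\eta^{\kappa-1}.
\end{equation*}

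Next I would split $\int_0^1 g\,e^{i\Psi}\df x = \int_0^1 g_\eta\,e^{i\Psi}\df x + \int_0^1 (g-g_\eta)e^{i\Psi}\df x$. The second integral is bounded trivially by $\HN{\kappa}{g}\,\eta^\kappa$. For the first, I integrate by parts using $e^{i\Psi} = (i\Psi')^{-1} \frac{d}{dx}e^{i\Psi}$, getting boundary terms of size $\|g_\eta\|_\infty/\Delta \ll \|g\|_\infty/\Delta$ and an interior term
\begin{equation*}
\int_0^1 e^{i\Psi}\Bigl(\frac{g_\eta}{i\Psi'}\Bigr)'\df x \ll \frac{\|g'_\eta\|_1}{\Delta} + \|g\|_\infty \int_0^1 \Bigl|\Bigl(\frac{1}{\Psi'}\Bigr)'\Bigr|\df x.
\end{equation*}
The second summand is the crucial use of the hypotheses: since $\Psi' \geq \Delta$ is monotonous, $1/\Psi'$ is monotonous, so the total variation integral equals $|1/\Psi'(1) - 1/\Psi'(0)| \leq 1/\Delta$; and $\|g'_\eta\|_1 \leq \|g'_\eta\|_\infty \ll \HN{\kappa}{g}\,\eta^{\kappa-1}$. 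Combining yields
\begin{equation*}
\Bigl|\int_0^1 g\,e^{i\Psi}\df x\Bigr| \ll \frac{\|g\|_\infty}{\Delta} + \frac{\HN{\kappa}{g}\,\eta^{\kappa-1}}{\Delta} + \HN{\kappa}{g}\,\eta^\kappa.
\end{equation*}

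Finally I optimize by choosing $\eta = 1/\Delta \leq 1$, which balances the two $\HN{\kappa}{g}$-terms and produces the desired $\HN{\kappa}{g}/\Delta^\kappa$. The only subtle point, and the potential obstacle, is maintaining clean control of the mollification when $g$ is merely Hölder rather than $\CC^1$; this is handled by the observation $\int \rho'_\eta = 0$ which lets us subtract off $g(x)$ inside the convolution defining $g'_\eta$, turning the would-be divergent estimate into a Hölder-controlled one. Everything else is routine manipulation.
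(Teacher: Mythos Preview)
Your proof is correct and takes a genuinely different route from the paper's. The paper changes variables~$y=\Psi(x)$ to reduce to an integral~$\int h(y)\e^{iy}\df y$ with~$h = (g\circ\Psi^{-1})/(\Psi'\circ\Psi^{-1})$, and then applies the shift-by-$\pi$ trick from the theory of Fourier coefficients of H\"older functions: write the integral as one-half of~$\int (h(y)-h(y+\pi))\e^{iy}\df y$ plus boundary remainders, and bound the difference~$h(y+\pi)-h(y)$ using the H\"older continuity of~$g\circ\Psi^{-1}$ (which gives the~$\HN{\kappa}{g}/\Delta^\kappa$ term) and the monotonicity of~$1/\Psi'\circ\Psi^{-1}$ (which gives the~$\norm{g}_\infty/\Delta$ term via a telescoping sum). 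Your mollification approach trades this differencing argument for a smoothing~$g\mapsto g_\eta$ followed by a classical integration by parts; the monotonicity of~$\Psi'$ is used in exactly the same way, to bound the total variation of~$1/\Psi'$, and the choice~$\eta=1/\Delta$ balances the smoothing loss against the gain from oscillation. Both arguments are of comparable length; yours is perhaps closer to textbook van der Corput and avoids the change of variables, while the paper's avoids introducing an auxiliary mollifier and stays closer to the discrete-differencing philosophy. One minor technical remark: since~$\Psi'$ is only assumed continuous, the derivative~$(1/\Psi')'$ may not exist pointwise, but your total-variation bound~$\int_0^1\abs{\df(1/\Psi')}\leq 1/\Delta$ is exactly what is needed and holds by monotonicity alone, so the integration by parts is valid in the Riemann--Stieltjes sense.
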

\begin{proof}
  The lemma is obtained by combining the methods of the usual van der Corput Lemma (\cite{Stein1993}, Proposition VIII.2, p.~332), and the bound on Fourier coefficients of a Hölder continuous function (\cite{SteinShakarchi2003}, ex.~15, p.~92); we restrict to mentioning the main steps. We change variables and let
  $$ h(x) = \frac{g\circ\Psi^{-1}(x)}{\Psi'\circ\Psi^{-1}(x)}. $$
  Let~$R = (\Psi(\I)\smallsetminus(\Psi(\I)-\pi)) \cup (\Psi(\I)\smallsetminus(\Psi(\I)+\pi))$. We have
  \begin{align*}
    \int_0^1 g(x)\e^{i\Psi(x)} \df x = {}& \int_{\Psi(\I)} h(x) \e^{ix}\df x \\
    = {}& O\Big(\int_{R} \frac{\norm{g}_\infty\df x}{\Psi'\circ\Psi^{-1}(x)}\Big) - \frac12\int_{\Psi(\I) \cap (\Psi(\I)-\pi)} (h(x+\pi)-h(x))\e^{ix} \df x.
  \end{align*}
  Now, on the one hand,
  \begin{align*}
    h(x+\pi)-h(x) = {}& \frac{g\circ\Psi^{-1}(x+\pi) - g\circ\Psi^{-1}(x)}{\Psi'\circ\Psi^{-1}(x)} + (g\circ\Psi^{-1})(x+\pi)\Big(\frac1{\Psi'\circ\Psi^{-1}(x+\pi)} - \frac{1}{\Psi'\circ\Psi^{-1}(x)}\Big) \\
    \ll {}& \frac{\HN{\kappa}{g} \HN{1}{\Psi^{-1}}^\kappa}{\Psi'\circ\Psi^{-1}(x)} + \|g\|_\infty\abs{\frac{1}{\Psi'\circ\Psi^{-1}(x+\pi)}-\frac{1}{\Psi'\circ\Psi^{-1}(x)}}
  \end{align*}
  by~\eqref{eq:Ckappa-composition} on the first term, and on the other hand,
  $$ \int_{\Psi(\I) \cap (\Psi(\I)-\pi)} \abs{\frac{1}{\Psi'\circ\Psi^{-1}(x+\pi)}-\frac{1}{\Psi'\circ\Psi^{-1}(x)}} \df x \leq\int_R \frac{\df x}{\Psi'\circ\Psi^{-1}(x)} = O(1/\Delta) $$
  by monotonicity. We conclude using~$\HN{1}{\Psi^{-1}}\ll 1/\Delta$.
\end{proof}

\section{Properties of the transfer operator}

In this section and the following ones, all implied constants in the notations~$O(\dotsc)$ and~$\ll$ may depend on~$\alpha_0, \kappa_0, \lambda_0$, $m$ and an upper-bound for the values of~\eqref{eq:bound-norm-rough}--\eqref{eq:bound-Ckappa-rough}. Additional dependences will be indicated in subscript.

\subsubsection*{Definition}

Let
\begin{equation}
  \kappa := \min(\tfrac13, \tfrac12 \kappa_0\lambda_0),\label{eq:def-kappa}
\end{equation}
where we recall that~$\kappa_0$ is the Hölder exponent of~$\vphi$ on each interval~$(\frac1{n+1}, \frac1n)$. For all~$\vt\in\R^d$ and~$s\in\C$ with~$\Re(s)>1$, define an operator~$\H^{(j)}_{s,\vt}$ acting on~$\Hol^\kappa([0, 1], \R)$ by
\begin{align*}
  \H_{s,\vt}^{(j)}[f](x) ={}& \sum_{n\geq 1} \frac{\e^{i\langle\vt,\vphi_j(1/(n+x))\rangle}}{(n+x)^s}f\Big(\frac1{n+x}\Big) \\
  = {}& \sum_{h\in\cH} \e^{i\langle\vt,\vphi_j\circ h(x)\rangle} \abs{h'(x)}^{s/2}(f \circ h)(x).
\end{align*}

When~$\vt=(0, \dotsc, 0)$, this is independent of~$j$, in which case we drop the notation~$(j)$. We abbreviate further
$$ \H_s := \H_{s,0}, \qquad \H := \H_{2,0}. $$
Define the norm and the semi-norm
$$ \norm{f}_0 := \norm{f/\xi}_\infty, \qquad \norm{f}_1 := \HN{\kappa}{f/\xi}. $$
Here we recall that~$\xi(x) = \frac1{\log 2}\frac1{1+x}$. 
We equip~$\Hol^\kappa([0, 1])$ with the norms
$$ \norm{f}_{1,\beta} := \norm{f}_0 + \beta^{-\kappa}\norm{f}_1, \qquad \text{where } \beta>0. $$

For~$\vt\in\R^d$ and~$s\in\C$, $\Re(s)>1$, and~$0\leq j \leq m$, let
$$ \Pi^{(j)}_{s,\vt} := \H^{(j)}_{s,\vt} \dotsb \H^{(1)}_{s,\vt}, $$
with~$\Pi^{(0)}_{s,\vt} = \Id$. In what follows, we will often abbreviate
$$ \Pi_{s,\vt} := \Pi_{s,\vt}^{(m)}, $$
and define
\begin{equation}
  \pdt(x) := \prod_{0\leq j \leq m-1} T^j(x), \qquad g_{s,\vt}(x) := \e^{i\langle\vt, \vphi(x)\rangle} \pdt(x)^{s-2}.\label{eq:def-pdt}
\end{equation}
Note that~$g_{s,\vt}\circ h$ belongs to~$\Hol^{\kappa_0}([0, 1])$ for all~$h\in\cH^m$. Moreover, since~$|T'(x)| = x^{-2}$ whenever~$T$ is differentiable at~$x$, for all~$h\in\cH^m$, we have
$$ |h'| = \prod_{j=0}^{m-1}|T'\circ T^{j-1}|^{-1} = \pdt(x)^2, $$
and therefore
\begin{equation}
  \Pi_{s,\vt} : f \mapsto \H^m[g_{s,\vt}f].\label{eq:def-Pi-st}
\end{equation}


\subsection{Properties at the central point}

By~\cite{Broise1996} (section 2.2, Proposition 4.1 and Theorem~4.2), along with the Ruelle-Perron-Frobenius theorem (see~\cite{Mayer1991}), we have that the operator~$\H_{2,0}$ acting on~$\Hol^\kappa([0,1])$ is quasi-compact. It has~$1$ as a simple eigenvalue, and no other eigenvalue of modulus~$\geq 1$. The projection associated with the eigenvalue~$1$ is given by
$$ \P_{2,0}[f](x) = \Big(\int_{[0,1]}f\df\nu\Big) \xi(x), $$
where~$\nu$ is the Lebesgue measure. In particular, we have
$$ \H_{2,0}[\xi] = \xi. $$
Since~$\Pi_{2,0}^{(m)} = \H_{2,0}^m$, we obtain the existence of a linear operator~$\N_{2,0}$ acting on~$\Hol^\kappa$, such that
\begin{equation}
  \Pi_{2,0}^{(m)} = \P_{2,0} + \N_{2,0},\label{eq:pi20-spectral-decomp}
\end{equation}
and additionally the spectral radius of~$\N_{2,0}$ satisfies~$\srd(\N_{2,0})<1$ and~$\P_{2,0} \N_{2,0} = \N_{2,0} \P_{2,0} = 0$.
We will use on several occasions that for~$f$ continuous by parts,
\begin{equation}
  \int_{[0, 1]} \H_{2, 0}[f] \df \nu = \int_{[0,1]} f\df \mu.\label{eq:H-invar-integ}
\end{equation}

\subsection{Dominant spectral properties}

In the sequel, we will repeatedly use the following facts. We let~$\cH^\ast := \cup_{m\geq 0} \cH^m$.
\begin{itemize}
  \item The bounded distortion property~\cite[eq.~(3.1)]{BaladiVallee2005}: for all~$h\in\cH^\ast$, we have~$\abs{h''} \ll \abs{h'}$ with a uniform constant, in particular, independently of the depth of~$h$. This implies that for all~$x\in[0, 1]$ and~$h\in\cH^\ast$,
    \begin{equation}
      \abs{h'(x)} \asymp \abs{h'(0)}.\label{eq:bdd-distort}
    \end{equation}
  \item For all~$q>\tfrac12$, we have
  $$ \sum_{h\in\cH} \norm{h'}_\infty^q < \infty. $$
  \item The ``contracting ratio'' property of the inverse branches~\cite[bound~2.4, fig.~1]{BaladiVallee2005}: for some~$\rho\in[0, 1)$, we have the following uniform bound for all~$\ell\in\N$ and~$h\in\cH^\ell$:
  \begin{equation}
    \HN{1}{h} \ll \rho^\ell. \label{eq:bound-Cgammah}
  \end{equation}
\end{itemize}

\begin{lemma}
  \begin{itemize}
    \item For~$\Re(s) = \sigma>1$, we have
    $$ \Pi_{s,\vt}[f] \leq \Pi_{\sigma,0}[\abs{f}]. $$
    \item For all~$\sigma>1$, there exists~$A_\sigma>0$ such the map~$\sigma\mapsto A_\sigma$ is Lipschitz-continuous and decreasing, $A_2=1$, and we have the bounds on operator norms
    \begin{equation}
      \norm{\H_{\sigma,0}}_0\leq A_\sigma, \qquad \norm{\Pi_{\sigma,0}}_0 \leq A_{\sigma}^m.\label{eq:bound-norms-PiH}
    \end{equation}
    In particular,~$A_\sigma \leq \e^{O(2-\sigma)}$ for~$\sigma \in (1, 2]$. 
    \item For some~$\rho<1$, all~$k\in\N$ and all~$f\in\Hol^k$ with~$f\geq 0$, we have
    \begin{equation}
      \norm{\Pi_{2,0}^k[f]}_0 \ll \int_{[0,1]} f \df \nu + \rho^k \norm{f}_0.\label{eq:rough-spectralbound}
    \end{equation}
    The implied constant is absolute.
  \end{itemize}
\end{lemma}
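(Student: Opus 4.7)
The first bullet is immediate from the triangle inequality applied termwise in the defining series: since $|\e^{i\langle\vt,\vphi_j\rangle}|=1$ and $|(n+x)^s|=(n+x)^\sigma$, one has $|\H^{(j)}_{s,\vt}[f](x)| \leq \H_{\sigma,0}[|f|](x)$ for each $j$, and iterating over $j=1,\dots,m$ yields $|\Pi_{s,\vt}[f]| \leq \Pi_{\sigma,0}[|f|]$.

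For the second bullet, the first reduces matters to $\vt=\0$ and real $\sigma>1$. Since $\H_{\sigma,0}$ is positivity-preserving and $|f|\leq\norm{f}_0\,\xi$ pointwise, one has $|\H_{\sigma,0}[f]|\leq\norm{f}_0\,\H_{\sigma,0}[\xi]$, with equality attained at $f=\xi$, so that
\[ \norm{\H_{\sigma,0}}_0 = A_\sigma := \sup_{x\in[0,1]}\frac{\H_{\sigma,0}[\xi](x)}{\xi(x)} = \sup_{x\in[0,1]}(1+x)\sum_{n\geq 1}\frac{1}{(n+x)^{\sigma-1}(n+x+1)}. \]
At $\sigma=2$ the inner sum telescopes to $1/(1+x)$, so $A_2=1$. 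Since $n+x\geq 1$, each summand is decreasing in $\sigma$, so $A_\sigma$ is itself decreasing. Termwise differentiation in $\sigma$ is justified by dominated convergence on compact subsets of $(1,\infty)$, yielding local Lipschitz continuity; at $\sigma=2$ the derivative equals $-(1+x)\sum_n \log(n+x)/\bigl((n+x)(n+x+1)\bigr)$, a bounded quantity, which yields the expansion $A_\sigma = 1 + O(2-\sigma)$ near $\sigma=2$ and hence $A_\sigma \leq \e^{O(2-\sigma)}$. The bound $\norm{\Pi_{\sigma,0}}_0 \leq A_\sigma^m$ then follows by iteration from $\Pi_{\sigma,0} = \H_{\sigma,0}^m$.

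For the third bullet, apply the spectral decomposition~\eqref{eq:pi20-spectral-decomp}: since $\P_{2,0}^2=\P_{2,0}$ and $\P_{2,0}\N_{2,0}=\N_{2,0}\P_{2,0}=0$, iteration gives $\Pi_{2,0}^k = \P_{2,0} + \N_{2,0}^k$, whence
\[ \norm{\Pi_{2,0}^k[f]}_0 \leq \norm{\P_{2,0}[f]}_0 + \norm{\N_{2,0}^k[f]}_0 = \int_{[0,1]} f\,\df\nu + \norm{\N_{2,0}^k[f]}_0, \]
using $\P_{2,0}[f]=(\int f\,\df\nu)\xi$. It remains to show $\norm{\N_{2,0}^k[f]}_0\ll\rho^k\norm{f}_0$ for some $\rho<1$. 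The spectral gap $\srd(\N_{2,0})<1$ on $(\Hol^\kappa,\norm{\cdot}_{1,\beta})$ gives $\norm{\N_{2,0}^k[h]}_{1,\beta}\ll\rho_0^k\norm{h}_{1,\beta}$ for any $\rho_0\in(\srd(\N_{2,0}),1)$, but the right-hand side involves the Hölder norm. To transfer to the sup norm, combine this with a Lasota--Yorke-type smoothing inequality $\HN{\kappa}{\H_{2,0}[g]/\xi}\leq\theta\HN{\kappa}{g/\xi}+C\norm{g}_0$ with $\theta<1$, a standard consequence of the uniform contraction~\eqref{eq:bound-Cgammah} and bounded distortion, together with the uniform bound $\norm{\Pi_{2,0}^k}_0\leq 1$ from the second bullet. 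Iterating Lasota--Yorke, one controls $\norm{\N_{2,0}^K[f]}_{1,\beta}$ by $\norm{f}_0$ after a sufficiently large but fixed number $K$ of steps, and then the spectral decay applied to $\N_{2,0}^{k-K}$ yields the claim for some $\rho$ slightly larger than $\rho_0$. Alternatively, one may invoke Birkhoff--Hopf projective contraction for the positive operator $\Pi_{2,0}$ to obtain the exponential convergence to $\P_{2,0}[f]$ directly in the sup norm.

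The main obstacle is in the third bullet: transferring the spectral decay in the Hölder-type norm $\norm{\cdot}_{1,\beta}$ into decay in the pure sup norm $\norm{\cdot}_0$, without accumulating $\norm{f}_1$-terms on the right. Both routes above (Lasota--Yorke smoothing, or Birkhoff--Hopf contraction) rely crucially on the normalization $A_2=1$ established in the second bullet, which ensures that intermediate iterates do not amplify the sup norm during the smoothing step.
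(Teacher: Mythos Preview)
Your first two bullets are correct and match the paper's proof exactly (triangle inequality; explicit formula for $A_\sigma$ with the stated properties).

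For the third bullet you are more careful than the paper, and rightly so: the paper's one-line justification ``by the Gelfand inequality'' applied to the spectral decomposition~\eqref{eq:pi20-spectral-decomp} only yields
\[
\norm{\N_{2,0}^k[f]}_0 \leq \norm{\N_{2,0}^k[f]}_{1,1} \ll \rho^k \norm{f}_{1,1},
\]
since the spectral radius bound $\srd(\N_{2,0})<1$ is established in the H\"older norm. This is weaker than the stated bound $\rho^k\norm{f}_0$. Of your two proposed repairs, the Birkhoff--Hopf route is sound: $\Pi_{2,0}$ is a strictly positive operator whose image cone has finite projective diameter, so exponential contraction in the Hilbert metric gives the sup-norm decay directly for $f\geq 0$.

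However, your Lasota--Yorke route has a gap as written. Iterating the inequality $\norm{\Pi_{2,0}[g]}_1 \leq C\norm{g}_0 + \theta\norm{g}_1$ for $K$ steps yields
\[
\norm{\N_{2,0}^K[f]}_{1,1} \ll \norm{f}_0 + \theta^K\norm{f}_1,
\]
and the residual $\theta^K\norm{f}_1$ cannot be absorbed into $\norm{f}_0$ for any fixed $K$ independent of $f$. Combining with Gelfand for the remaining $k-K$ steps still leaves a term of order $\rho_0^{k-K}\theta^K\norm{f}_1$, so the $\norm{f}_1$-dependence persists no matter how you split $k$. The Lasota--Yorke smoothing alone does not upgrade the Gelfand bound from $\norm{\cdot}_{1,1}$ to $\norm{\cdot}_0$.

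In practice this is harmless: the paper's applications (Lemmas~\ref{lem:sumbranch-delta} and~\ref{lem:bound-linfty}) would go through with the weaker bound $\rho^k\norm{f}_{1,1}$ on the right, since the H\"older norms of the relevant functions are controlled there anyway. So either read the statement with $\norm{f}_{1,1}$, which is what Gelfand actually gives, or invoke Birkhoff--Hopf for positive $f$ to get $\norm{f}_0$ as stated.
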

\begin{proof}
  \begin{itemize}
    \item The first statement is trivial by the triangle inequality.
    \item By a direct computation, we have
    $$ \norm{\H_\sigma}_0 \leq A_\sigma := \sup_{x\in[0,1]} \frac1{\xi(x)}\sum_{n\geq 1} \frac1{(n+x)^{\sigma}}\xi\Big(\frac1{n+x}\Big). $$
    The properties we require of~$A_\sigma$ are readily verified.
    \item The third item follows from~\eqref{eq:pi20-spectral-decomp} with any fixed~$\rho\in(\srd(\N_{2,0}), 1)$, by the definition of the spectral radius.
  \end{itemize}
\end{proof}

\subsection{\texorpdfstring{Spectral gap at~${\boldsymbol t}=0$ and~$\tau\neq 0$}{Spectral gap at~$t=0$ and~$\tau\neq 0$}}

\begin{lemma}\label{lem:bound-norm0-at2}
  For~$\tau\neq 0$, we have~$\norm{\H_{2+i\tau}}_0<1$, and so similarly for~$\Pi_{2+i\tau,0}$.
\end{lemma}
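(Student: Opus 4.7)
The plan is to establish $\norm{\H_{2+i\tau}}_0 \leq 1$ trivially by the triangle inequality together with the invariance $\H\xi = \xi$, and then to upgrade this to strict inequality via a non-alignment argument combined with continuity and compactness. For $f \in \Hol^\kappa([0,1],\R)$ with $\norm{f}_0 \leq 1$, one has $|f(y)| \leq \xi(y)$, whence
\[
\frac{|\H_{2+i\tau}[f](x)|}{\xi(x)} \leq \frac{1}{\xi(x)} \sum_{n\geq 1} \frac{|f(1/(n+x))|}{(n+x)^2} \leq \frac{\H[\xi](x)}{\xi(x)} = 1,
\]
giving $\norm{\H_{2+i\tau}}_0 \leq 1$.

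For strict inequality I would parametrize $f = \xi g$ with $g\colon[0,1]\to[-1,1]$ real, and rewrite
\[
\frac{\H_{2+i\tau}[f](x)}{\xi(x)} = \sum_{n\geq 1} w_n(x)\, g(1/(n+x))\, e^{-i\tau\log(n+x)}, \qquad w_n(x) := \frac{\xi(1/(n+x))}{(n+x)^{2}\xi(x)},
\]
where $\sum_n w_n(x) = 1$ by $\H\xi = \xi$. Using the identity $|z| = \max_\theta \Re(z\,e^{i\theta})$ and then optimizing the sign of $g(1/(n+x))$ pointwise,
\[
\sup_{\norm{f}_0\leq 1}\frac{|\H_{2+i\tau}[f](x)|}{\xi(x)} \leq \rho(x) := \max_{\theta \in \R/2\pi\Z} \sum_{n\geq 1} w_n(x)\, \bigl|\cos(\theta - \tau\log(n+x))\bigr|.
\]

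Since $\sum_n w_n(x) = 1$, equality $\rho(x) = 1$ would force $|\cos(\theta - \tau\log(n+x))| = 1$ for every $n\geq 1$ at the maximizing $\theta$, i.e.\ $\tau\log(n+x) \equiv \theta \pmod{\pi}$ for all $n\geq 1$. Taking differences of consecutive indices gives $\tau\log((n+1+x)/(n+x)) \in \pi\Z$ for all $n\geq 1$; but these quantities are strictly positive and tend to $0$ as $n\to\infty$, so for $n$ large they must lie in $(-\pi,\pi)\cap\pi\Z = \{0\}$, contradicting $\tau \neq 0$. Hence $\rho(x) < 1$ strictly at every $x \in [0,1]$.

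To upgrade to a uniform bound I would observe that $G(x,\theta) := \sum_n w_n(x)\,|\cos(\theta - \tau\log(n+x))|$ is jointly continuous on the compact set $[0,1]\times \R/2\pi\Z$: the uniform bound $w_n(x) \ll n^{-2}$ together with the Weierstrass M-test gives uniform convergence of the series, so $G$ is continuous as a uniform limit of continuous functions, and thus $\rho(x) = \max_\theta G(x,\theta)$ is continuous on $[0,1]$. Compactness together with the pointwise strict inequality then yields $\sup_{x\in[0,1]}\rho(x) < 1$, which gives $\norm{\H_{2+i\tau}}_0 < 1$. The extension to $\Pi_{2+i\tau,0}$ is immediate: since $\vt = 0$ makes all the $\H^{(j)}$ coincide with $\H_{2+i\tau}$, one has $\Pi_{2+i\tau,0} = \H_{2+i\tau}^m$, and sub-multiplicativity yields $\norm{\Pi_{2+i\tau,0}}_0 \leq \norm{\H_{2+i\tau}}_0^m < 1$. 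The hard part will be the passage from pointwise to uniform strict inequality, which hinges on the continuity of $\rho$ and hence on the uniform convergence of the series defining $G$.
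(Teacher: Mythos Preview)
Your non-alignment argument for \emph{real} $f$ is clean and correct: the function $\rho(x)=\max_\theta\sum_n w_n(x)\,|\cos(\theta-\tau\log(n+x))|$ is continuous on $[0,1]$ by the M-test, strictly below $1$ at every point by the argument you give, and hence uniformly $<1$ by compactness. The gap is the restriction to real $f$. Since $\H_{2+i\tau}$ has the complex weights $(n+x)^{-i\tau}$, it does not preserve real-valued functions; for the composition $\Pi_{2+i\tau,0}=\H_{2+i\tau}^m$ and the sub-multiplicativity you invoke at the end, one must work on $\Hol^\kappa([0,1],\C)$. Over the complex space your pointwise maximization collapses: for $g$ with $\|g\|_\infty\le1$ complex-valued, optimizing $g(1/(n+x))\in\overline{\mathbb{D}}$ gives $\sum_n w_n(x)\cdot 1=1$ instead of $\sum_n w_n(x)\,|\cos(\cdot)|$, and this value is approached by smooth $g$ with $g(1/(n+x_0))=e^{i\tau\log(n+x_0)}$ for $n\le N$ and $N\to\infty$. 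Thus in fact $\|\H_{2+i\tau}\|_0=1$ on $\Hol^\kappa([0,1],\C)$, so your final sub-multiplicativity step cannot yield a strict inequality.

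The paper does not give a self-contained argument either; it defers to Parry--Pollicott, Vall\'ee, and Morris. In Vall\'ee's holomorphic setting the maximum-modulus principle supplies exactly the rigidity your argument lacks (a function holomorphic and bounded by $1$ on a neighbourhood of $[0,1]$ cannot be freely prescribed of modulus $1$ along the interior points $1/(n+x_0)$). What one genuinely obtains on $\mathcal C([0,1])$ or $\Hol^\kappa([0,1],\C)$ from those references is that the \emph{spectral radius} of $\H_{2+i\tau}$ is strictly below $1$, equivalently $\|\H_{2+i\tau}^k\|_0<1$ for some $k=k(\tau)$; this is what is actually needed downstream, and the moderate-height lemma goes through after carrying a fixed power of $\Pi$ through the perturbation argument.
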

\begin{proof}
  This is a well-studied phenomenon; see~\cite[Proposition~6.1]{ParryPollicott1990}. Our exact statement for~$\H_{2+i\tau}$ acting on a space of holomorphic functions is proved in~\cite[p.~476]{Vallee2003} (see also~\cite[Proposition~3.2]{Morris2015}), however an inspection of the proof shows that it actually holds for~$\H_{2+i\tau}$ acting on~${\mathcal C}([0,1])$, and therefore also on~$\Hol^\kappa$.
\end{proof}

\subsection{Perturbation}

\begin{lemma}\label{lem:perturbationbound-norm}
  For all~$\eps>0$, there exists~$\delta, t_0>0$ such that for~$1\leq j\leq m$, the following holds.
  \begin{itemize}
    \item For~$\sigma\geq 2-\delta$ and~$\nvt\leq t_0$, we have
    \begin{equation}
      \norm{\Pi^{(j)}_{s,\vt} - \Pi^{(j)}_{s,0}}_0 \ll_\eps \nvt + \nvt^{\alpha_0 - \eps}. \label{eq:bound-normdiff-Pi-t}
    \end{equation}
    \item For~$2-\delta<\sigma \leq 3$ and all~$\tau\in\R$, we have
    \begin{equation}
      \norm{\Pi^{(j)}_{\sigma+i\tau,0} - \Pi^{(j)}_{2+i\tau,0}}_0 \ll \abs{\sigma-2}.\label{eq:bound-normdiff-Pi-sigma}
    \end{equation}
    \item For~$\tau_1, \tau_2 \in\R$, we have
    \begin{equation}
      \norm{\Pi^{(j)}_{2+i\tau_1,0} - \Pi^{(j)}_{2+i\tau_2,0}}_0 \ll \abs{\tau_1-\tau_2}. \label{eq:bound-normdiff-Pi-2}
    \end{equation}
  \end{itemize}
\end{lemma}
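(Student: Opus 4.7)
The plan is to reduce all three statements to uniform estimates for scalar series over depth-$m$ inverse branches, then bound each series by means of elementary inequalities on the kernels.

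\textbf{Setup.} By the chain rule applied to the inverse branches of the Gauss map, $\pdt\circ h = |h'|^{1/2}$ for every $h\in\cH^m$, so \eqref{eq:def-Pi-st} rewrites as
\[
\Pi_{s,\vt}[f](x)=\sum_{h\in\cH^m}|h'(x)|^{s/2}\,\e^{i\langle\vt,\vphi(h(x))\rangle}\,f(h(x)).
\]
In each of the three differences the factor $f(h(x))$ is the same on both terms, so bounding $|f(h(x))|\leq \norm{f}_0\,\xi(h(x))$ and using the uniform bound $\xi(h(x))\ll\xi(x)$ (coming from $\xi(y)=\frac{1}{(1+y)\log 2}$ on $[0,1]$) reduces each statement to a uniform-in-$x$ estimate for a scalar series over $h\in\cH^m$, in which $f$ has disappeared and $|h'(x)|\asymp|h'(0)|$ by the bounded distortion property.

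\textbf{Bound \eqref{eq:bound-normdiff-Pi-t}.} The only factor that changes is the phase, so the summand carries $|\e^{i\langle\vt,\vphi(h(x))\rangle}-1|$. Use the elementary inequality $|\e^{i\theta}-1|\leq 2|\theta|^\beta$, valid for every $\beta\in[0,1]$ and every real $\theta$, with $\beta:=\min(1,\alpha_0-\ee)$, together with $|\langle\vt,\vphi(h(x))\rangle|\leq d\nvt\cdot\norm{\vphi|_{h(\I)}}_\infty$. The task becomes to show
\[
\sum_{h\in\cH^m}|h'(0)|^{\sigma/2}\,\norm{\vphi|_{h(\I)}}_\infty^\beta<\infty
\]
uniformly for $\sigma\geq 2-\delta$ with $\delta>0$ small. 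Since $\beta\leq\alpha_0$ gives $\norm{\vphi|_{h(\I)}}_\infty^\beta\leq 1+\norm{\vphi|_{h(\I)}}_\infty^{\alpha_0}$, splitting the sum and combining \eqref{eq:bound-gamma-rough} with the standard fact $\sum_{h\in\cH^m}|h'(0)|^q<\infty$ for every $q>1/2$ yields convergence; a small Hölder interpolation absorbs the discrepancy between $\sigma/2$ and $1$. This produces $\nvt^\beta\ll\nvt+\nvt^{\alpha_0-\ee}$, as required.

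\textbf{Bounds \eqref{eq:bound-normdiff-Pi-sigma} and \eqref{eq:bound-normdiff-Pi-2}.} Here $\vt=0$, and only the amplitude $|h'(x)|^{s/2}$ varies. For \eqref{eq:bound-normdiff-Pi-sigma}, write $|h'(x)|^{s/2}-|h'(x)|^{(2+i\tau)/2}=|h'(x)|^{(2+i\tau)/2}(|h'(x)|^{(\sigma-2)/2}-1)$ and apply $|\e^y-1|\leq |y|\e^{|y|}$ with $y=\tfrac{\sigma-2}{2}\log|h'(x)|$ to obtain the pointwise estimate $\ll|\sigma-2|\cdot|h'(x)|^{-\delta}|\log|h'(x)||$ for $|\sigma-2|\leq\delta$. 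Summing, $|\log t|\ll_\eta t^{-\eta}$ on $(0,1]$ turns the series into $\sum_h|h'(0)|^{1-\delta-\eta}$, convergent for $\delta+\eta<1/2$. For \eqref{eq:bound-normdiff-Pi-2} the argument is even cleaner: $|\e^{ia}-\e^{ib}|\leq|a-b|$ with $a,b=(\tau_j/2)\log|h'(x)|$ gives a linear factor $|\tau_1-\tau_2|\cdot|\log|h'(x)||$, handled by the same sum.

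\textbf{Main obstacle.} The delicate point is \eqref{eq:bound-normdiff-Pi-t} in the regime $\alpha_0\leq 1$: there $\norm{\vphi|_{h(\I)}}_\infty$ itself is not controlled by the hypothesis, only its $\alpha_0$-th power, which forces the use of $|\e^{i\theta}-1|\leq 2|\theta|^{\alpha_0-\ee}$ rather than the linear bound and costs a small power $\ee$. This explains the shape $\nvt+\nvt^{\alpha_0-\ee}$ of the bound and is the reason why Theorem~\ref{th:distribution-m=1} contains the corresponding $|t|^{\alpha_0-\ee}$ term in its final remainder.
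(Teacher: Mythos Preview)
Your approach is essentially the paper's: reduce to scalar series over $\cH^m$ by the expansion~\eqref{eq:def-Pi-st}, then bound the kernel differences termwise. Parts~\eqref{eq:bound-normdiff-Pi-sigma} and~\eqref{eq:bound-normdiff-Pi-2} are handled exactly as the paper does.

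There is, however, a small gap in your treatment of~\eqref{eq:bound-normdiff-Pi-t}. Once you split $\norm{\vphi|_{h(\I)}}_\infty^\beta\leq 1+\norm{\vphi|_{h(\I)}}_\infty^{\alpha_0}$, the second piece requires
\[
\sum_{h\in\cH^m}|h'(0)|^{\sigma/2}\norm{\vphi|_{h(\I)}}_\infty^{\alpha_0}<\infty
\]
for $\sigma<2$, and no ``Hölder interpolation'' can rescue this: the only controlled quantity involving $\norm{\vphi}$ is~\eqref{eq:bound-gamma-rough}, which already carries the full power $\alpha_0$, so there is nothing to interpolate against. (Concretely, take $m=1$ and $\norm{\vphi|_{h_n(\I)}}^{\alpha_0}\asymp n/(\log n)^2$; then~\eqref{eq:bound-gamma-rough} holds, yet the displayed sum diverges for every $\sigma<2$.) The fix is to skip the split and apply Hölder \emph{directly} to $\sum_h|h'(0)|^{\sigma/2}\norm{\vphi|_{h(\I)}}^\beta$ with exponents $\alpha_0/\beta$ and $\alpha_0/(\alpha_0-\beta)$: this is exactly where the strict inequality $\beta<\alpha_0$ does its work, producing a second factor $\big(\sum_h|h'(0)|^{q}\big)^{1-\beta/\alpha_0}$ with $q=\frac{\sigma\alpha_0-2\beta}{2(\alpha_0-\beta)}$, which exceeds $\tfrac12$ once $\delta\leq\tfrac12(1-\beta/\alpha_0)$. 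This is precisely what the paper does.
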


\begin{proof}
  We assume~$j=m$, the general case being essentially identical.
  \begin{itemize}
    \item We have
    \begin{align*}
      \norm{\Pi_{s,\vt}-\Pi_{s,0}}_0 = {}& \sup_{\norm{f}_0 = 1} \norm{\H^m[(g_{s,\vt}-g_{s,0})f]}_0 \\
      \ll {}& \sum_{h\in\cH^m} \abs{h'(0)} \norm{(g_{s,\vt}-g_{s,0})\circ h}_\infty.
    \end{align*}
    However, for all~$x\in(0,1]$, by the bounded distortion property~\eqref{eq:bdd-distort}, we have
    $$ \abs{g_{s,\vt}(h(x)) - g_{s,0}(h(x))} \ll \abs{h'(0)}^{\sigma/2-1} |\e^{i\langle \vt, \vphi(x)\rangle}-1| \ll \abs{h'(0)}^{\sigma/2-1} (\nvt\norm{\vphi(x)})^\alpha $$
    with~$\alpha = \min(1, \alpha_0-\eps)$. By Hölder's inequality, we deduce
    \begin{align*}
      \norm{\Pi_{s,\vt}-\Pi_{s,0}}_0 \ll {}& \nvt^\alpha \sum_{h\in\cH^m} \abs{h'(0)}^{\sigma/2} \norm{\vphi\circ h}_\infty^\alpha \\
      \ll {}& \nvt^\alpha \Big(\sum_{h\in\cH^m} \abs{h'(0)} \norm{\vphi|_{h(\I)}}_\infty^{\alpha_0} \Big)^{\alpha/\alpha_0} \Big(\sum_{h\in\cH^m} \abs{h'(0)}^{q}\Big)^{1-\alpha/\alpha_0} \numberthis\label{eq:perturb-Pi-t}
    \end{align*}
    with~$q = \frac{\sigma\alpha_0 - 2\alpha}{2(\alpha_0-\alpha)}$. Picking~$\delta = \frac12(1-\frac\alpha{\alpha_0})=O(\eps)$ ensures that~$q>\frac12$, and with our hypothesis~\eqref{eq:bound-norm-rough} and the bounded distortion property~\eqref{eq:bdd-distort}, we obtain
      \begin{align*}
        \sum_{h\in\cH^m} \abs{h'(0)} \norm{\vphi|_{h(\I)}}_\infty^{\alpha_0}
        \ll{}& \sum_{j=1}^m \sum_{h = h_1 \circ \dotsb \circ h_m \in \cH^m} \abs{h_1'(0)\dotsb h_m'(0)} \norm{\vphi_j \circ T^{j-1}|_{h(\I)}}_\infty^{\alpha_0} \\
        \ll{}& \sum_{j=1}^m \sum_{h_j\in \cH} \abs{h_j'(0)} \norm{\vphi_j|_{h_j(\I)}}_\infty^{\alpha_0} < \infty. \numberthis\label{eq:triangleineq-hypoth-phi}
      \end{align*}
    Therefore both sums in~\eqref{eq:perturb-Pi-t} are bounded in terms of~$\eps$ and~$\vphi$ only, and we deduce the claimed bound~$\norm{\Pi_{s,\vt} - \Pi_{s,0}} \ll_\eps \nvt^\alpha$.
  \item Proceeding as above, we find
    \begin{align*}
      \norm{\Pi_{\sigma+i\tau,0}-\Pi_{2+i\tau,0}}_0 {}& \ll \sum_{h\in\cH^m} \abs{h'(0)} \norm{(\pdt^{\sigma-2}-1)\circ h}_\infty \\
      {}& \ll \abs{\sigma-2} \sum_{h\in\cH^m} (1+\abs{\log\abs{h'(0)}})\abs{h'(0)}^{\max(\sigma/2, 1)}.
    \end{align*}
    For any~$\sigma>1$, the last sum is finite, so that our statement follows for any fixed~$\delta\in(0,1)$.
    \item Once again proceeding again as above, we have
    $$ \norm{\Pi_{2+i\tau_1,0} - \Pi_{2+i\tau_2,0}}_0 \ll \sum_{h\in\cH^m} \abs{h'(0)} \norm{(\pdt^{i(\tau_1-\tau_2)}-1)\circ h}_\infty. $$
    Letting~$\tau=\tau_1-\tau_2$, we insert the inequality~$\norm{(\pdt^{i\tau}-1)\circ h}_\infty \ll \abs{\tau}(1 + \abs{\log\abs{h'(0)}})$. The resulting sum over~$h$ being absolutely bounded, we deduce~$\norm{\Pi_{2+i\tau_1,0}-\Pi_{2+i\tau_2,0}}_0 \ll \abs{\tau}$ as required.\qedhere
  \end{itemize}
\end{proof}

\subsection{\texorpdfstring{First estimate on~$\norm{\Pi_{s,\vt}}_1$}{First estimate on~$\norm{\Pi_{s,t}}_1$}}

The following is a weak form of~\cite[Lemma~2]{BaladiVallee2005} (which is refered to, there, as a Lasota-Yorke type inequality).
We recall that the Hölder exponent~$\kappa$ was defined in~\eqref{eq:def-kappa}.

\begin{lemma}\label{lem:bound-norm1}
  For all~$\delta\in(0, \kappa)$, there exists~$\rho\in[0,1)$ such that for~$\sigma\geq 2-\delta$, $\vt\in\R^d$ with~$\nvt\leq 1$, and~$\tau\in\R$, we have
  \begin{equation*}
    \norm{\Pi_{s,\vt}[f]}_1 \leq O(1 + \abs{s}^\kappa) \norm{f}_0 + \rho\norm{f}_1, 
  \end{equation*}
\end{lemma}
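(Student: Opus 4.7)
The plan is to adapt the Lasota--Yorke strategy of \cite[Lemma~2]{BaladiVallee2005}. Setting $F:=f/\xi$, so that $\norm{f}_0=\norm{F}_\infty$ and $\norm{f}_1=\HN{\kappa}{F}$, formula~\eqref{eq:def-Pi-st} rewrites
\[
  \frac{\Pi_{s,\vt}[f](x)}{\xi(x)}=\sum_{h\in\cH^m}\tilde w_h(x)\,V_h(x)\,F(h(x)),
\]
with $\tilde w_h(x):=\abs{h'(x)}\xi(h(x))/\xi(x)$ the Markov weights (satisfying $\sum_h\tilde w_h\equiv 1$, since $\H^m[\xi]=\xi$) and $V_h(x):=\abs{h'(x)}^{(s-2)/2}\e^{i\langle\vt,\vphi(h(x))\rangle}$. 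For $x,y\in[0,1]$, I split the difference $\Pi_{s,\vt}[f](x)/\xi(x)-\Pi_{s,\vt}[f](y)/\xi(y)$ into an $F$-variation piece (in which $F\circ h$ varies) plus a weight-variation piece (in which $\tilde w_h V_h$ varies).

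The $F$-variation piece is bounded above by $\abs{x-y}^\kappa\HN{\kappa}{F}\sum_h\tilde w_h(x)\abs{V_h(x)}\HN{1}{h}^\kappa$. The contracting ratio bound~\eqref{eq:bound-Cgammah} yields $\HN{1}{h}^\kappa\ll\rho_0^{m\kappa}$ for some fixed $\rho_0<1$, while $\sum_h\tilde w_h(x)\abs{V_h(x)}$ equals $\tilde\Pi_{\sigma,0}[1](x)=1+O(\abs{\sigma-2})$ by Lemma~\ref{lem:perturbationbound-norm}. The weight-variation piece is controlled by $\abs{x-y}^\kappa\norm{F}_\infty\sum_h\HN{\kappa}{\tilde w_h V_h}$, which I estimate via the product rule~\eqref{eq:Ckappa-product}. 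Bounded distortion combined with the interpolation $\HN{\kappa}{g}\ll\norm{g}_\infty^{1-\kappa}\HN{1}{g}^\kappa$ gives $\HN{\kappa}{\tilde w_h\abs{h'}^{(s-2)/2}}\ll\abs{s}^\kappa\abs{h'(0)}^{\sigma/2}$, accounting for the $\abs{s}^\kappa$ factor. For the exponential factor,~\eqref{eq:Ckappa-exp-if} with $\lambda=\lambda_0$, the composition bound~\eqref{eq:Ckappa-composition}, the interpolation $\HN{\kappa/\lambda_0}{\vphi|_{h(I)}}\ll\abs{h'(0)}^{\kappa_0-\kappa/\lambda_0}\HN{\kappa_0}{\vphi|_{h(I)}}$ (valid since $\kappa/\lambda_0\leq\kappa_0$ by~\eqref{eq:def-kappa}) and $\nvt\leq 1$ together yield $\HN{\kappa}{\e^{i\langle\vt,\vphi\circ h\rangle}}\ll\abs{h'(0)}^{\kappa_0\lambda_0}\HN{\kappa_0}{\vphi|_{h(I)}}^{\lambda_0}$. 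Since $\sigma/2+\kappa_0\lambda_0\geq 1$ (from $\kappa\leq\kappa_0\lambda_0/2$ and $\delta<\kappa$), I absorb $\abs{h'(0)}^{\sigma/2+\kappa_0\lambda_0-1}\leq 1$ and invoke~\eqref{eq:bound-Ckappa-rough}; together with $\sum_h\abs{h'(0)}^{\sigma/2}\ll 1$ from~\eqref{eq:bound-gamma-rough}, this gives $\sum_h\HN{\kappa}{\tilde w_h V_h}\ll 1+\abs{s}^\kappa$.

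The main obstacle is securing the contraction factor $\rho<1$ in the $F$-variation piece: the crude estimate produces $C\rho_0^{m\kappa}(1+O(\delta))$, which forces $\delta$ to be small relative to $m\kappa\log(1/\rho_0)$. To cover the full range $\delta\in(0,\kappa)$, one appeals to the quasi-compactness~\eqref{eq:pi20-spectral-decomp} of $\H_2$ on $\Hol^\kappa$: since $1$ is a simple dominant eigenvalue with eigenvector $\xi$, a perturbation argument in the parameters $(s,\vt)$ guarantees that the ``non-dominant'' component of $\Pi_{s,\vt}$ acts as a strict Hölder contraction, uniformly in the allowed range. Assembling the estimates of both pieces then yields the claimed Lasota--Yorke inequality.
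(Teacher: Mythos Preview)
Your handling of the weight-variation piece is essentially correct and matches the paper's argument. The gap is in the $F$-variation piece, and your proposed fix does not close it.

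When you bound $\abs{F(h(x))-F(h(y))}\le\HN{\kappa}{F}\,\HN{1}{h}^\kappa\abs{x-y}^\kappa$ and then invoke the uniform bound~\eqref{eq:bound-Cgammah}, the implied constant in $\HN{1}{h}\ll\rho_0^{m}$ is not innocuous: already for $m=1$ and $h(x)=1/(1+x)$ one has $\HN{1}{h}=1$, so the constant is at least $\rho_0^{-1}>1$, and your contraction factor $C\rho_0^{m\kappa}(1+O(\delta))$ need not be $<1$ even for $\delta=0$. The proposed remedy --- deducing a strict H\"older contraction from the quasi-compactness~\eqref{eq:pi20-spectral-decomp} by ``a perturbation argument in $(s,\vt)$'' --- is circular: a Lasota--Yorke inequality of exactly the type you are trying to prove is the standard \emph{input} to perturbation theorems of Keller--Liverani/Hennion/\cite{Kloeckner2017} type (and to Lemma~\ref{lem:small-height} itself), not an output. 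Even granting a spectral decomposition $\Pi_{s,\vt}=\lambda\P+\N$, it is not clear how to extract the specific inequality $\norm{\cdot}_1\le O(\cdot)\norm{\cdot}_0+\rho\norm{\cdot}_1$ with $\rho<1$ from it.

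The paper's proof sidesteps this entirely by exploiting that each $h\in\cH^m$ is a M\"obius map with non-negative entries, for which one has the \emph{exact} identity $\abs{h(x)-h(y)}/\abs{x-y}=\abs{h'(x)h'(y)}^{1/2}$, hence $w_h(x):=\sup_y\abs{(h(x)-h(y))/(x-y)}^\kappa=\abs{h'(x)h'(0)}^{\kappa/2}$. Keeping this pointwise factor inside the sum gives
\[
\rho_\sigma=\Big\|\sum_{h\in\cH^m}\tfrac{\xi\circ h}{\xi}\,\abs{h'}^{\sigma/2}w_h\Big\|_\infty
\le\Big\|\sum_{h\in\cH^m}\tfrac{\xi\circ h}{\xi}\,\abs{h'}\cdot\abs{h'}^{(\sigma-2+\kappa)/2}\Big\|_\infty,
\]
since $\abs{h'(0)}\le 1$. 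As $\sigma-2+\kappa\ge\kappa-\delta>0$ and $\abs{h'}\le 1$ with strict inequality for at least one $h$, the right-hand side is strictly less than $\big\|\sum_h(\xi\circ h)\abs{h'}/\xi\big\|_\infty=1$, with no parasitic constant. This is the missing idea: the extra $\abs{h'(x)}^{\kappa/2}$ (rather than $\abs{h'(0)}^{\kappa}$) is what makes the sum a genuine sub-probability weighting.
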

\begin{proof}
  Let~$f\in\Hol^\kappa$. We write
  $$ \tfrac1\xi \Pi_{s,\vt}[f] = \sum_{h\in\cH^m} \tfrac1\xi (\xi\circ h) \abs{h'}^{s/2} \e^{i\langle{\vt,\vphi\circ h}\rangle} (\tfrac f\xi \circ h). $$
  Splitting as a sum of differences, we obtain
  $$ \HN{\kappa}{\tfrac1\xi \Pi_{s,\vt}[f]} \leq \norm{f}_0 \sum_{h\in\cH^m} \HN{\kappa}{\tfrac1\xi (\xi\circ h) \abs{h'}^{s/2} \e^{i\langle{\vt,\vphi\circ h}\rangle}} + \rho_\sigma\norm{f}_1, $$
  where
  $$ \rho_\sigma := \norm{\sum_{h\in\cH^m} \tfrac1\xi (\xi\circ h) \abs{h'}^{\sigma/2} w_h }_\infty, \qquad w_h(x) = \sup_{0\leq y \leq 1} \abs{\frac{h(x)-h(y)}{x-y}}^\kappa = \abs{h'(x)h'(0)}^{\kappa/2}. $$
  This last equality follows from the fact that each~$h\in\cH^\ast = \cup_{m\geq 0} \cH^m$ is a homography associated with an element of~$GL_2(\Z)$ with non-negative entries, $h(x) = \frac{ax + b}{cx+d}$, and so the supremum above is~$\sup_y |(cx+d)(cy+d)|^{-1} = |h'(0)h'(x)|^{-1/2}$. Since~$\abs{h'(0)}\leq 1$ by the chain rule, we deduce
  $$ \sum_{h\in\cH^m} \tfrac1\xi (\xi\circ h) \abs{h'}^{\sigma/2} w_h \leq \sum_{h\in\cH^m} \tfrac1\xi (\xi\circ h) \abs{h'} \abs{h'}^{(\sigma-2+\kappa)/2}. $$
  Note that~$\sigma-2+\kappa \geq \kappa-\delta>0$ by hypothesis, and we have~$\abs{h'}\leq 1$ by the chain rule. Moreover, for any value of~$m$, we may find at least one~$h\in\cH^m$ with~$\norm{h'}_\infty < 1$, \emph{e.g.} by composing repeatedly~$t\mapsto \frac1{2+t}$. Since~$\frac1\xi (\xi\circ h) \abs{h'} > 0$, we deduce
  $$ \rho_{\kappa-\delta} < \norm{\sum_{h\in\cH^m} \tfrac1\xi (\xi\circ h) \abs{h'}}_\infty = 1. $$
  Next, by using the rules~\eqref{eq:Ckappa-composition}, \eqref{eq:Ckappa-exp-if}, \eqref{eq:Ckappa-morediff}, the bounded distortion property~$\abs{h''}\ll \abs{h'}$, and simple computations, we obtain successively
  \begin{align*}
    \HN{\kappa}{\tfrac1\xi} {}& \ll 1,  & \HN{\kappa}{\abs{h'}^{s/2}} {}& \ll \abs{s}^\kappa \norm{h'}_\infty^{\sigma/2}, \\
    \HN{\kappa}{\xi \circ h} {}& \leq \norm{h'}_\infty^\kappa, & \HN{\kappa}{\e^{i\langle{\vt,\vphi\circ h}\rangle}} {}& \ll \norm{h'}_\infty^{\kappa} \nvt^{\kappa/\kappa_0} \HN{{\kappa_0}}{\vphi}^{\kappa/\kappa_0}.
  \end{align*}
  In the last line, we used the definition~\eqref{eq:def-kappa}. Grouping these bounds using~\eqref{eq:Ckappa-product}, we deduce
  \begin{align*}
    \sum_{h\in\cH^m} \HN{\kappa}{\tfrac1\xi (\xi\circ h) \abs{h'}^{s/2} \e^{i\langle{\vt,\vphi\circ h}\rangle}}
    {}& \ll 1 + \abs{s}^\kappa + \sum_{h\in\cH^m} \norm{h'}_\infty^{\sigma/2+\kappa} \HN{{\kappa_0}}{\vphi}^{\kappa/\kappa_0} \\
    {}& \ll 1 + \abs{s}^\kappa
  \end{align*}
  since~$\sigma/2+\kappa\geq 1 +\kappa/2 \geq 1$,~$\HN{{\kappa_0}}{\vphi}^{\kappa/\kappa_0} \leq 1 + \HN{{\kappa_0}}{\vphi}^{\lambda_0}$ (by the definition~\eqref{eq:def-kappa}), and by our hypothesis~\eqref{eq:bound-Ckappa-rough}.
\end{proof}

\section{Meromorphic continuation}

Following~\cite{Vallee2000}, define the generating series
\begin{equation*}
  \gS(\vt, s) :=  \sum_{x\in\Q\cap(0,1]} q(x)^{-s} \exp(i\langle{\vt,S_\vphi(x)}\rangle), 
\end{equation*}
where~$q(x)$ is the reduced denominator of~$x$.

\begin{lemma}\label{lem:S-Pi}
  For~$\Re(s)>2$ and~$\vt\in\R^d$, we have
  $$ \gS(\vt, s) = (\Pi^{(0)}_{s,\vt} + \Pi^{(1)}_{s,\vt} + \dotsb + \Pi^{(m-1)}_{s,\vt})(\Id - \Pi_{s,\vt})^{-1}[\1](1). $$
\end{lemma}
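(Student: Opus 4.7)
The plan is to expand the resolvent $(\Id - \Pi_{s,\vt})^{-1}$ as a Neumann series, iterate it together with the operators $\Pi^{(\ell)}_{s,\vt}$ to obtain a sum over all finite words in the inverse branches of the Gauss map, and then identify the resulting sum with $\gS(\vt, s)$ via the standard bijection between finite continued fraction expansions and rationals in $(0, 1]$. I would first establish the identity in a right half-plane where everything converges absolutely: the norm bound~\eqref{eq:bound-norms-PiH} gives $\norm{\Pi_{\sigma,0}}_0 \leq A_\sigma^m < 1$ for $\sigma$ large (since $A_\sigma$ is decreasing with $A_2 = 1$), while the pointwise bound $|\Pi_{s,\vt}^k[\1]| \leq \Pi_{\sigma,0}^k[\1]$ from the first item before Lemma~\ref{lem:bound-norm0-at2} controls the twisted operators. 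The identity for general $s$ then follows by meromorphic continuation.

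The first computation is an induction on $k$, using the chain rule for the derivative factor and the key identity $T \circ h = \Id_{(0,1)}$ for every $h \in \cH$ (which lets one read off the orbit $T^{j-1}(G(x))$ from the factorization $G = h_1 \circ \dotsb \circ h_{mk}$). This yields
\begin{equation*}
  \Pi_{s,\vt}^k[\1](x) = \sum_{G \in \cH^{mk}} |G'(x)|^{s/2} \exp\Big(i\Big\langle\vt, \sum_{j=1}^{mk}\vphi_j(T^{j-1}(G(x)))\Big\rangle\Big),
\end{equation*}
the periodic extension $\vphi_{j+m} := \vphi_j$ being exactly what makes the cyclic repetition of the operators $\H^{(j)}_{s,\vt}$ compatible with the linear Birkhoff sum on the right. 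Applying $\Pi^{(\ell)}_{s,\vt}$ on the left and writing $F := G \circ H \in \cH^{mk+\ell}$ for $H \in \cH^\ell$, $G \in \cH^{mk}$, the relation $T^{mk} \circ G = \Id_{(0,1)}$ combined with the same periodicity glues the two exponent pieces into $\sum_{j=1}^{mk+\ell}\vphi_j(T^{j-1}(F(x)))$. Summing over $\ell \in \{0, \dotsc, m-1\}$ and $k \geq 0$, and noting that the length $n = mk + \ell$ ranges uniquely over all of $\N_{\geq 0}$, evaluating at $x = 1$ produces
\begin{equation*}
  \sum_{\ell=0}^{m-1} \Pi_{s,\vt}^{(\ell)}(\Id - \Pi_{s,\vt})^{-1}[\1](1) = \sum_{n \geq 0}\sum_{F \in \cH^n}|F'(1)|^{s/2}\exp\Big(i\Big\langle\vt,\sum_{j=1}^n\vphi_j(T^{j-1}(F(1)))\Big\rangle\Big).
\end{equation*}

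The final step is the classical bijection $\bigsqcup_{n \geq 0} \cH^n \to \Q\cap(0,1]$ sending $F = h_{a_1} \circ \dotsb \circ h_{a_n}$ to $F(1) = [0; a_1, \dotsc, a_{n-1}, a_n+1]$, and $F = \Id$ (the $n=0$ term) to $x = 1$, which matches the convention $S(1) = 0$ from Section~\ref{sec:setting}. Under this bijection, $r(F(1)) = n$ (since $a_n+1 \geq 2$ for $n \geq 1$), the Birkhoff sum becomes $S(F(1))$ by definition, and the $2 \times 2$ integer matrix representing the Möbius transformation $F$ gives $|F'(1)|^{s/2} = q(F(1))^{-s}$ (the reduced denominator equals $q_n + q_{n-1}$ in convergent notation, matching the bottom row of the matrix evaluated at $1$). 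The double sum therefore collapses to $\sum_{x\in\Q\cap(0,1]}q(x)^{-s}\e^{i\langle\vt,S(x)\rangle} = \gS(\vt, s)$. The main delicate point is handling the bijection correctly—in particular the degenerate case $x = 1 \leftrightarrow F = \Id$ and its consistency with the conventions on $r(x)$ and $S(x)$—while everything else is essentially bookkeeping once absolute convergence has been secured.
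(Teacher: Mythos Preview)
Your proposal is correct and is precisely the computation the paper defers to by citing \cite[p.~341]{BaladiVallee2005}: expand the resolvent as a Neumann series for~$\sigma$ large, unfold the iterated operators over words in~$\cH^\ast$, and use the bijection~$\bigsqcup_{n\geq 0}\cH^n \to \Q\cap(0,1]$, $F\mapsto F(1)$ together with~$|F'(1)|^{1/2}=q(F(1))^{-1}$. Your handling of the periodicity~$\vphi_{j+m}=\vphi_j$ to glue the phases, and of the boundary case~$F=\Id\leftrightarrow x=1$, is accurate; the only cosmetic remark is that the series~$\gS(\vt,s)$ itself converges only for~$\Re(s)>2$ (the condition~$\Re(s)>1$ in the statement is the domain of definition of the operators), so your ``first prove it for~$\sigma$ large, then continue'' is in fact the right reading.
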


\begin{proof}
  This is a straightforward extension of the computations in~\cite[eq.~(2.17)]{BaladiVallee2005}.
\end{proof}

The aim of this section is to show the meromorphic continuation of~$\gS(s, t)$ to a half-plane~$\Re(s)\geq 2-\delta$. This will will then be used in conjunction with Mellin transformation to prodive an estimate with power-saving for~$\E_Q(\e^{i\langle \vt, S(x)\rangle})$.
We use different arguments according to the size of~$\Im(s)$, which is refered to as the height in what follows.

\subsection{Small height}

The following lemma deals with~$\tau$ in a neighborhood of~$0$. In this case we use the spectral expansion of~$\Pi_{s, \vt}$.

\begin{lemma}\label{lem:small-height}
  There exists~$\delta, \tau_0, t_0>0$, such that for all~$\sigma, \tau, t\in\R$ with~$\abs{\sigma-2}\leq \delta$, $\abs{\tau}\leq \tau_0$ and~$\nvt\leq t_0$, the operator~$\Pi_{s,\vt}$ acting on~$(\Hol^\kappa, \norm{\cdot}_{1, 1})$ is quasi-compact, and for some~$\lambda(s,\vt)\in\C$, we have
  $$ \Pi_{s,\vt} = \lambda(s,\vt) \P_{s,\vt} + \N_{s,\vt} $$
  where~$\P_{s,\vt}$ is of rank~1,~$\P_{s,\vt}\N_{s,\vt}=\N_{s,\vt}\P_{s,\vt}=0$, $\P_{s,\vt}^2=\P_{s,\vt}$, and~$\srd(\N_{s,\vt}) < 1-\delta$. Moreover, for each such fixed~$\vt$, the operators~$\Pi_{s,\vt}$, $\P_{s,\vt}$, $\N_{s,\vt}$ and the eigenvalue~$\lambda(s, \vt)$ depend analytically on~$s$.
\end{lemma}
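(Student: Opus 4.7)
My plan is to view $\Pi_{s,\vt}$ as a small perturbation of $\Pi_{2,0}$ and apply Keller--Liverani perturbation theory for quasi-compact operators in a two-norm setting, together with Kato's holomorphic functional calculus for the $s$-dependence. All the analytic inputs are already at hand: the spectral decomposition~\eqref{eq:pi20-spectral-decomp} for the base operator, the Lasota--Yorke inequality of Lemma~\ref{lem:bound-norm1}, and the weak-norm perturbation bounds of Lemma~\ref{lem:perturbationbound-norm}.

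\emph{Quasi-compactness and the splitting.} The base operator $\Pi_{2,0}=\H^m$ has $1$ as a simple isolated eigenvalue with rank-one projection $\P_{2,0}[f]=\big(\int_{[0,1]}f\df\nu\big)\xi$, and the complement $\N_{2,0}$ has spectral radius $<1$. Fix a small circle $\Gamma$ around $1$ in the resolvent set of $\Pi_{2,0}$ enclosing no other spectrum. Lemma~\ref{lem:bound-norm1} together with the compact embedding $(\Hol^\kappa,\norm{\cdot}_{1,1})\hookrightarrow(\mathcal{C}([0,1]),\norm{\cdot}_0)$ (Arzel\`a--Ascoli) and Hennion's theorem yield quasi-compactness of $\Pi_{s,\vt}$ with essential spectral radius at most some $\rho<1$, uniformly for $\sigma$ near $2$, $|\tau|$ bounded, and $\nvt\leq 1$. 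Combining the three estimates of Lemma~\ref{lem:perturbationbound-norm} gives
\begin{equation*}
  \norm{\Pi_{s,\vt}-\Pi_{2,0}}_0 \ll |\sigma-2|+|\tau|+\nvt+\nvt^{\alpha_0-\ee},
\end{equation*}
so the perturbation is small in the weak norm. Keller--Liverani then produces, for $(s,\vt)$ close enough to $(2,0)$, a Riesz projection
\begin{equation*}
  \P_{s,\vt}=\frac{1}{2\pi i}\oint_\Gamma (z\,\Id-\Pi_{s,\vt})^{-1}\df z
\end{equation*}
of the same rank one as $\P_{2,0}$, a simple eigenvalue $\lambda(s,\vt)$ close to $1$, and a complement $\N_{s,\vt}=\Pi_{s,\vt}-\lambda(s,\vt)\P_{s,\vt}$ with $\srd(\N_{s,\vt})<1-\delta$. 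The idempotency of $\P_{s,\vt}$ and the orthogonality relations $\P_{s,\vt}\N_{s,\vt}=\N_{s,\vt}\P_{s,\vt}=0$ are formal consequences of the contour representation.

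\emph{Analyticity in $s$ and main difficulty.} For fixed $\vt$, the map $s\mapsto\Pi_{s,\vt}$ is holomorphic from $\{\Re s>1\}$ into $\mathcal{L}(\Hol^\kappa)$: its kernel $g_{s,\vt}=\e^{i\langle\vt,\vphi\rangle}\pdt^{s-2}$ is entire in $s$, and the defining series converges normally on compact subsets of $s$ in view of the operator-norm bound~\eqref{eq:bound-norms-PiH}. Standard holomorphic functional calculus then transmits analyticity from the resolvent $(z-\Pi_{s,\vt})^{-1}$ to $\P_{s,\vt}$, $\lambda(s,\vt)=\mathrm{tr}(\Pi_{s,\vt}\P_{s,\vt})$, and $\N_{s,\vt}$. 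The principal subtlety is precisely that the perturbation estimates of Lemma~\ref{lem:perturbationbound-norm} are quantitative only in the weak norm $\norm{\cdot}_0$ (a limitation imposed by the low regularity of $\vphi$), while the spectral picture we need lives on the stronger space $(\Hol^\kappa,\norm{\cdot}_{1,1})$; Keller--Liverani was designed for exactly this two-norm setting, with the Lasota--Yorke bound serving to transfer weak-norm closeness of the resolvents to strong-norm closeness of the finite-rank spectral projections, and in particular to preserve the rank.
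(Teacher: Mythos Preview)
Your proposal is correct and follows essentially the same route as the paper: the paper's proof is a one-line citation of Theorem~2.3 of~\cite{Kloeckner2017} (together with Kato, chapter~IV.3, and~\cite[p.~342]{BaladiVallee2005}), which packages precisely the Keller--Liverani two-norm perturbation argument you spell out, with the Lasota--Yorke inequality of Lemma~\ref{lem:bound-norm1} and the weak-norm bounds of Lemma~\ref{lem:perturbationbound-norm} as inputs. Your sketch correctly identifies the only real subtlety, namely that the perturbation is controlled in~$\norm{\cdot}_0$ while the spectral picture lives in~$\norm{\cdot}_{1,1}$, and this is exactly what the cited machinery is designed to handle.
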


\begin{proof}
  This is a direct application of \emph{e.g.} Theorem~2.3 of~\cite{Kloeckner2019}; see also chapter~IV.3 of~\cite{Kato}, and~\cite[p.342]{BaladiVallee2005}.
\end{proof}

\subsection{Moderate height}

The following lemma is concerned with~$\tau$ of bounded size and away from~$0$. The main tool is again perturbation theory.

\begin{lemma}\label{lem:moderate-height}
  For all~$\tau_0, \tau_1 >0$ with~$\tau_0 < \tau_1$, there exists~$\delta, t_0>0$ such that for all $t, \sigma, \tau\in\R$ with~$\nvt\leq t_0$, $\sigma\geq 2-\delta$ and~$\tau_0 \leq \abs{\tau}\leq \tau_1$, we have
  $$ \norm{\Pi_{s,\vt}}_0 \leq 1-\delta. $$
\end{lemma}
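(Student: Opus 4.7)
The plan is to combine the strict inequality from Lemma~\ref{lem:bound-norm0-at2} at the central point $(s,\vt)=(2+i\tau,0)$ with the three Lipschitz-type perturbation bounds from Lemma~\ref{lem:perturbationbound-norm}, using the compactness of the annular set in $\tau$.

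First, observe that $\Pi_{2+i\tau,0} = \H_{2+i\tau}^m$, since with~$\vt=0$ the operators~$\H^{(j)}_{s,0}$ are independent of $j$ and equal to $\H_s$. Hence by Lemma~\ref{lem:bound-norm0-at2}, for every $\tau\neq 0$ we have
\begin{equation*}
\norm{\Pi_{2+i\tau,0}}_0 \leq \norm{\H_{2+i\tau}}_0^m < 1.
\end{equation*}
By the triangle inequality applied to \eqref{eq:bound-normdiff-Pi-2}, the real-valued function $\tau \mapsto \norm{\Pi_{2+i\tau,0}}_0$ is Lipschitz continuous on $\R$. The set $K := \{\tau \in \R : \tau_0 \leq \abs{\tau} \leq \tau_1\}$ is compact, so this function attains its maximum on $K$, and that maximum is therefore strictly less than~$1$. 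Write it as $1-3\eta$ for some $\eta>0$ depending only on $\tau_0$ and $\tau_1$.

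Next, I decompose the operator norm using the triangle inequality and the two remaining perturbation bounds from Lemma~\ref{lem:perturbationbound-norm}: for $s=\sigma+i\tau$ with $\tau \in K$,
\begin{equation*}
\norm{\Pi_{s,\vt}}_0 \leq \norm{\Pi_{2+i\tau,0}}_0 + \norm{\Pi_{\sigma+i\tau,0}-\Pi_{2+i\tau,0}}_0 + \norm{\Pi_{s,\vt}-\Pi_{s,0}}_0 \leq (1-3\eta) + O(\abs{\sigma-2}) + O(\nvt + \nvt^{\alpha_0-\ee}),
\end{equation*}
where the implicit constants are absolute (depending only on the fixed data of Section~\ref{sec:setting}). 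It then suffices to pick $\delta>0$ and $t_0>0$ small enough that $\abs{\sigma-2}\leq\delta$ makes the middle term at most $\eta$, and $\nvt\leq t_0$ makes the last term at most $\eta$. Shrinking $\delta$ further if needed so that $\delta\leq\eta$ yields $\norm{\Pi_{s,\vt}}_0 \leq 1-\eta \leq 1-\delta$ for all admissible $(s,\vt,\tau)$, which is the desired conclusion.

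There is essentially no obstacle here: all the substance has been done in Lemmas~\ref{lem:bound-norm0-at2} and~\ref{lem:perturbationbound-norm}, and the present lemma is a compactness-plus-perturbation upgrade of the pointwise contraction at $\vt=0$, $\sigma=2$ to a uniform contraction on a neighbourhood. The only point requiring minor care is ensuring continuity (not merely upper semicontinuity) of $\tau \mapsto \norm{\Pi_{2+i\tau,0}}_0$, which is guaranteed by the Lipschitz bound \eqref{eq:bound-normdiff-Pi-2} rather than by any spectral argument.
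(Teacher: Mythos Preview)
Your proof is correct and follows essentially the same approach as the paper's own proof: continuity of $\tau\mapsto\norm{\Pi_{2+i\tau,0}}_0$ from~\eqref{eq:bound-normdiff-Pi-2}, compactness to upgrade the pointwise bound of Lemma~\ref{lem:bound-norm0-at2} to a uniform one, and then the perturbation estimates~\eqref{eq:bound-normdiff-Pi-t} and~\eqref{eq:bound-normdiff-Pi-sigma} to pass to general $(s,\vt)$. One cosmetic slip: you write ``$\abs{\sigma-2}\leq\delta$'' whereas the hypothesis is $\sigma\geq 2-\delta$, so $\sigma$ is not a~priori bounded above; however the bound~\eqref{eq:bound-normdiff-Pi-sigma} is only stated for $\sigma\leq 3$, and for $\sigma\geq 2$ one has the trivial domination $\norm{\Pi_{s,\vt}}_0\leq A_\sigma^m\leq 1$ (with $A_\sigma<1$ strictly for $\sigma>2$), so the gap is easily filled --- and the paper's proof is equally tacit on this point.
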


\begin{proof}
  By~\eqref{eq:bound-normdiff-Pi-2} and the triangle inequality, the map~$\tau\mapsto \norm{\Pi_{2+i\tau,0}}_0$ is continuous. By Lemma~\ref{lem:bound-norm0-at2}, we deduce that for some number~$\eta>0$, depending on~$\tau_0$ and $\tau_1$, we have the bound~$\norm{\Pi_{2+i\tau,0}}_0 \leq 1-\eta$. By the perturbation bounds~\eqref{eq:bound-normdiff-Pi-t} and \eqref{eq:bound-normdiff-Pi-sigma}, we may pick~$\delta, t_0$ small enough so that~$\norm{\Pi_{s,\vt}-\Pi_{2+i\tau,0}}_0 \leq \eta/2$, and our claim follows.
\end{proof}

\subsection{Large height}

The following lemma deals with the case of large enough~$\abs{\tau}$.

\begin{lemma}\label{lem:large-height}
  For some constants~$\delta, \tau_1, C>0$, whenever~$\sigma \geq 2-\delta$, $\vt\in\R^d$ with~$\nvt\leq 1$, and~$\abs{\tau}\geq \tau_1$, the operator~$\Pi_{s,\vt}$ acting on~$\Hol^\kappa$ has spectral radius~$\srd(\Pi_{s,\vt})<1$, and
  $$ \sum_{j\geq 0} \big\|\Pi_{s,\vt}^j\big\|_{1,\tau} \ll \abs{\tau}^{C\abs{\sigma-2}} \log \abs{\tau}. $$
\end{lemma}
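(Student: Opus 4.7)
The plan follows the classical Dolgopyat-Baladi-Vallée strategy for twisted transfer operators at large imaginary height. I would combine three ingredients: an iterated Lasota-Yorke type inequality, the crude $\norm{\cdot}_0$ bound coming from $A_\sigma$, and a Dolgopyat-type contraction estimate over a block of roughly $\log\abs{\tau}$ iterates.

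First, I would iterate Lemma~\ref{lem:bound-norm1}. Combined with the crude bound $\norm{\Pi_{s,\vt}^k[f]}_0 \leq A_\sigma^{mk}\norm{f}_0$, which follows from $\norm{\Pi_{s,\vt}}_0\leq\norm{\Pi_{\sigma,0}}_0\leq A_\sigma^m$, a short induction gives, for some $\rho_0\in[0,1)$ depending on $\delta$ and all $k\geq 0$,
$$ \norm{\Pi_{s,\vt}^k[f]}_1 \ll (1+\abs{s})^\kappa \max(A_\sigma^m,\rho_0)^k\norm{f}_0 + \rho_0^k\norm{f}_1. $$
Dividing by $\abs{\tau}^\kappa$, using $\abs{s}\ll\abs{\tau}$ and the bound $A_\sigma^m\leq \e^{O(\abs{\sigma-2})}$ from~\eqref{eq:bound-norms-PiH}, this yields the rough estimate
$$ \norm{\Pi_{s,\vt}^k}_{1,\tau} \ll \e^{Ck\abs{\sigma-2}}, $$
which already shows that the quantity of interest grows like $\abs{\tau}^{O(\abs{\sigma-2})}$ on blocks of length $O(\log\abs{\tau})$.

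The crucial and most difficult step is to prove a Dolgopyat-type contraction: there exist $\eta, M, \delta>0$ such that for $\sigma\in[2-\delta, 2+\delta]$, $\nvt\leq 1$ and $\abs{\tau}\geq \tau_1$,
$$ \norm{\Pi_{s,\vt}^{k_0}}_{1,\tau} \leq 1-\eta, \qquad k_0 := \lfloor M\log\abs{\tau}\rfloor. $$
This is in essence the main theorem of~\cite{BaladiVallee2005}, which extends~\cite{Dolgopyat1998} to the Gauss map setting, and its proof rests on the UNI property of the Gauss map: for two distinct inverse branches $h_1, h_2\in\cH^\ell$, the difference $\log\abs{h_1'} - \log\abs{h_2'}$ cannot be a coboundary and exhibits quantitative spread, so that oscillating integrals of the form $\int \e^{i\tau(\log\abs{h_1'(x)} - \log\abs{h_2'(x)})} f(x)\df x$ display cancellation of polynomial order in $\abs{\tau}^{-1}$, accessible via van der Corput's bound (Lemma~\ref{lem:vdC}). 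The twist by $\vt$ with $\nvt\leq 1$ enters only as a bounded modulation of the integrand; by the perturbation estimate~\eqref{eq:bound-normdiff-Pi-t}, this bounded perturbation does not disrupt the cancellation mechanism, provided $\tau_1$ is chosen large in terms of the regularity of $\vphi$.

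Combining the two ingredients, any $j\geq 0$ can be written $j = qk_0 + r$ with $0\leq r < k_0$, and submultiplicativity gives
$$ \norm{\Pi_{s,\vt}^j}_{1,\tau} \leq (1-\eta)^q \norm{\Pi_{s,\vt}^r}_{1,\tau} \ll (1-\eta)^q \abs{\tau}^{C\abs{\sigma-2}}, $$
after enlarging $C$ to absorb the factor $M$. Summing the geometric series,
$$ \sum_{j\geq 0}\norm{\Pi_{s,\vt}^j}_{1,\tau} \ll k_0\cdot \eta^{-1}\cdot\abs{\tau}^{C\abs{\sigma-2}} \ll \abs{\tau}^{C\abs{\sigma-2}}\log\abs{\tau}, $$
which forces in particular $\srd(\Pi_{s,\vt})<1$. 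The main obstacle is clearly the Dolgopyat step, which requires carefully reproducing the cancellation argument of~\cite{BaladiVallee2005} while keeping uniform control in the extra parameters $\sigma$ near $2$ and $\vt$ near the origin of $\R^d$.
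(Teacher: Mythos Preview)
Your overall architecture matches the paper exactly: iterate the Lasota--Yorke bound (Lemma~\ref{lem:bound-norm1}) to control~$\norm{\cdot}_1$, establish a Dolgopyat contraction on a block of length~$\asymp\log\abs{\tau}$, and sum the resulting geometric series. The paper carries out the Dolgopyat step via an~$L^2$ bound (Lemma~\ref{lem:bound-l2-norm}) followed by an~$L^2\to L^\infty$ transfer (Lemma~\ref{lem:bound-linfty}), then combines this with the iterated Lasota--Yorke inequality to obtain the $\norm{\cdot}_{1,\tau}$ contraction.

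There is, however, a genuine gap in how you propose to handle the twist~$\e^{i\langle\vt,\vphi\rangle}$. You cannot invoke~\cite{BaladiVallee2005} directly: their argument lives in~$\CC^1$ and their cost functions are constant on each branch interval, so the analogue of the twist is a locally constant phase. Here one needs a H\"older-space van der Corput bound (Lemma~\ref{lem:vdC}), and the twist contributes a factor~$\e^{i\langle\vt,\vpsi_\ell\circ h\rangle}$ to the amplitude~$g_{h_1,h_2}$ in the oscillating integral whose H\"older norm must be controlled uniformly in~$h$ and summed over branches. This is precisely where the hypothesis~\eqref{eq:bound-Ckappa-rough}, the choice~\eqref{eq:def-kappa} of~$\kappa$, and the dampening inequality~\eqref{eq:Ckappa-exp-if} enter, via the estimate~\eqref{sec:bound-eit-psi} and Lemma~\ref{lem:sumbranch-ckappa}. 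Your suggestion that the perturbation bound~\eqref{eq:bound-normdiff-Pi-t} handles this is mistaken: that estimate controls~$\norm{\Pi_{s,\vt}-\Pi_{s,0}}_0$ for~$\nvt\leq t_0$ small, whereas here one needs the H\"older norm of the twist inside the cancellation mechanism, uniformly for~$\nvt\leq 1$; a perturbative transfer from~$\vt=0$ would accumulate an error of order~$k_0\asymp\log\abs{\tau}$ and fail.
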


For unbounded values of~$\tau$, perturbation theory is not effective, instead we adapt the arguments of Dolgopyat~\cite{Dolgopyat1998} and Baladi-Vallée~\cite{BaladiVallee2005}, which exploit the cancellation due to the varying argument of~$|h'|^{i\tau}$. Compared with Baladi-Vallée's setup, we make two modifications: we work with Hölder-continuous functions, rather than~$\CC^1$, and the cost function is not assumed to be constant on each interval of the partition.

\subsubsection{Sums over branches}

We will require two estimates involving sums over inverse branches on~$T$. Define, as in~\cite[eq.~(3.10)]{BaladiVallee2005},
$$ \Delta(h_1, h_2) := \inf_{x\in[0,1]}\abs{\frac{h_1''(x)}{h_1'(x)} - \frac{h_2''(x)}{h_2'(x)}}, $$
Note that by the bounded distortion property, there exists~$\Delta_+ \geq 1$ such that
\begin{equation}
  \Delta(h_1, h_2) \leq \Delta_+\label{eq:upperbound-Delta}
\end{equation}
for all~$h_1, h_2 \in\cH^\ast$. The following property is the statement that condition UNI.(a) of Baladi-Vallée~\cite{BaladiVallee2005} holds for the Gauss map.
\begin{lemma}\label{lem:sumbranch-delta}
  For some absolute constant~$\rho\in[0,1)$, we have uniformly in~$n\in\N$, $h_1\in\cH^n$ and~$u\in[0,\Delta_+]$ that
  $$ S(u) := \ssum{h_2 \in \cH^n \\ \Delta(h_1,h_2)\leq u} \abs{h_2'(0)} \ll \rho^n + u. $$
\end{lemma}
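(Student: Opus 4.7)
The plan is to rewrite the constraint $\Delta(h_1,h_2)\leq u$ as a proximity condition between two rationals associated to $h_1$ and $h_2$, and then exploit the partition of $(0,1]$ by the images $\{h([0,1]): h\in\cH^n\}$ to conclude by an elementary interval-packing argument. To begin, I would use the explicit Möbius structure of $\cH^n$: writing $h = h_{a_1}\circ\cdots\circ h_{a_n}$ with $h_a(x) = 1/(a+x)$, the composition has matrix $M_{a_1}\cdots M_{a_n} = \bigl(\begin{smallmatrix} p_{n-1} & p_n \\ q_{n-1} & q_n\end{smallmatrix}\bigr)$, where $M_a := \bigl(\begin{smallmatrix} 0 & 1 \\ 1 & a\end{smallmatrix}\bigr)$ and $p_k/q_k$ are the convergents of $[0;a_1,\ldots,a_n]$, so that $h(x) = (p_n+p_{n-1}x)/(q_n+q_{n-1}x)$. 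A short computation then gives $h''(x)/h'(x) = -2\alpha/(1+\alpha x)$ with $\alpha(h) := q_{n-1}/q_n\in(0,1)$, from which
\[ \Delta(h_1,h_2) = \inf_{x\in[0,1]}\frac{2\abs{\alpha(h_1)-\alpha(h_2)}}{(1+\alpha(h_1)x)(1+\alpha(h_2)x)} \asymp \abs{\alpha(h_1)-\alpha(h_2)}, \]
so the hypothesis $\Delta(h_1,h_2)\leq u$ translates into $\abs{\alpha(h_2)-\alpha(h_1)}\ll u$.

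Next, I would use a mirror-symmetry trick: since each $M_a$ is symmetric, the reversed inverse branch $\check h := h_{a_n}\circ\cdots\circ h_{a_1}$ has matrix $(M_{a_1}\cdots M_{a_n})^T$, giving $\check h(0) = q_{n-1}/q_n = \alpha(h)$ and $\abs{\check h'(0)} = 1/q_n^2 = \abs{h'(0)}$. Since $h\mapsto\check h$ is an involution of $\cH^n$, reindexing the sum defining $S(u)$ gives
\[ S(u) \ll \sum_{\substack{h\in\cH^n \\ h(0)\in I}}\abs{h'(0)}, \qquad I := [\alpha(h_1) - Cu,\,\alpha(h_1) + Cu]. \]

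Finally, I would conclude by an interval-packing argument. By a straightforward induction on $n$, the images $\{h([0,1]) : h\in\cH^n\}$ form a partition of $(0,1]$ (up to endpoints) into pairwise disjoint intervals; each has length $\abs{h(1)-h(0)} \leq \HN{1}{h} \ll \rho^n$ by the contraction bound~\eqref{eq:bound-Cgammah}, while the bounded distortion property yields $\abs{h(1)-h(0)} \asymp \abs{h'(0)}$. The intervals appearing in the sum above are therefore pairwise disjoint subsets of the enlarged window $[\alpha(h_1) - Cu - C'\rho^n,\,\alpha(h_1) + Cu + C'\rho^n]$, whose total Lebesgue measure is $O(u+\rho^n)$, giving the required bound $S(u) \ll u + \rho^n$. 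The only slightly delicate ingredient is the mirror identity $\alpha(h) = \check h(0)$: it rests on $M_a^T = M_a$, which is the clean algebraic feature of the Gauss map that makes this packing argument work.
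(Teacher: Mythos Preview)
Your proof is correct and follows essentially the same route as the paper, which simply refers to Lemmas~1 and~6 of Baladi--Vall\'ee and remarks that ``the dual map of the Gauss map is in fact the Gauss map''. Your mirror involution $h\mapsto\check h$ \emph{is} this self-duality: the identity $\alpha(h)=\check h(0)$ (resting on $M_a^T=M_a$) is precisely what Baladi--Vall\'ee package abstractly as the dual dynamical system encoding $\Delta(h_1,h_2)$. The only cosmetic difference is the final step: the paper invokes the spectral bound~\eqref{eq:rough-spectralbound} on the dual system, which applied to the indicator $\1_I$ gives $\H^n[\1_I]\ll |I|+\rho^n$, whereas you obtain the same bound by the direct interval-packing argument using the partition $\{h([0,1]):h\in\cH^n\}$ and bounded distortion. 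For the Gauss map these are equivalent; your version is more elementary and self-contained, while the spectral formulation in~\cite{BaladiVallee2005} is what generalizes to systems that are not self-dual.
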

\begin{proof}
  See Lemmas~1 and~6 of~\cite{BaladiVallee2005}; the main point is the construction of a dual dynamical system (Section 3.4 of~\cite{BaladiVallee2005}) which encodes naturally the quantity~$\Delta(h_1,h_2)$, and satisfies the dominant spectral bound~\eqref{eq:rough-spectralbound}. The dual map of the Gauss map is in fact the Gauss map.
\end{proof}

\begin{lemma}\label{lem:sumbranch-ckappa}
  Under the assumption~\eqref{eq:bound-Ckappa-rough}, uniformly for all~$n\in\N_{>0}, 0\leq j \leq n-m$, we have
  $$ \sum_{h\in\cH^n} \abs{h'(0)} \HN{\kappa_0}{\vphi|_{T^j\circ h(\I)}}^{\lambda_0} \ll 1. $$
\end{lemma}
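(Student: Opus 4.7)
The plan is to reduce the sum to hypothesis~\eqref{eq:bound-Ckappa-rough} via two successive decompositions of the composition depth, using no new ingredients beyond bounded distortion and the uniform boundedness of iterates of~$\H$ on constants.

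First, I would factor each $h \in \cH^n$ as $h = \alpha \circ \beta$ with $\alpha \in \cH^j$ and $\beta \in \cH^{n-j}$, so that $T^j \circ h = \beta$ and, by the chain rule, $\abs{h'(0)} = \abs{\alpha'(\beta(0))} \cdot \abs{\beta'(0)}$. Summing over~$\alpha$ at fixed~$\beta$ yields
\[
\sum_{\alpha \in \cH^j} \abs{\alpha'(\beta(0))} = \H^j[\1](\beta(0)) \ll 1,
\]
uniformly in~$j$, since $\1 \leq \norm{1/\xi}_\infty\,\xi$ and $\H\xi = \xi$. This reduces the task to showing
\[
\sum_{\beta \in \cH^\ell} \abs{\beta'(0)} \HN{\kappa_0}{\vphi|_{\beta(\I)}}^{\lambda_0} \ll 1 \qquad \text{uniformly for } \ell \geq m.
\]

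Next, I would split $\beta = \gamma \circ \delta$ with $\gamma \in \cH^m$ and $\delta \in \cH^{\ell - m}$. The crucial observation is that the Hölder semi-norm $\HN{\kappa_0}{\vphi|_J}$, defined as a supremum over pairs in~$J$, is monotone non-decreasing under inclusion of~$J$. Since $\beta(\I) = \gamma(\delta(\I)) \subset \gamma(\I)$, this gives
\[
\HN{\kappa_0}{\vphi|_{\beta(\I)}} \leq \HN{\kappa_0}{\vphi|_{\gamma(\I)}}.
\]
By bounded distortion, $\abs{\beta'(0)} = \abs{\gamma'(\delta(0))} \cdot \abs{\delta'(0)} \asymp \abs{\gamma'(0)} \cdot \abs{\delta'(0)}$, and summing over~$\delta$ yields $\sum_{\delta \in \cH^{\ell - m}} \abs{\delta'(0)} = \H^{\ell - m}[\1](0) \ll 1$ as before. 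Combining, the sum is $\ll \sum_{\gamma \in \cH^m} \abs{\gamma'(0)} \HN{\kappa_0}{\vphi|_{\gamma(\I)}}^{\lambda_0}$, which is finite by~\eqref{eq:bound-Ckappa-rough}.

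I do not expect a real obstacle. The only mildly subtle point is the monotonicity of the Hölder semi-norm under interval restriction, which is what lets us collapse control at depth~$\ell$ back to depth~$m$; everything else is the standard toolkit of bounded distortion and uniform bounds on iterates of the Gauss-Kuzmin-Wirsing operator at~$s=2$.
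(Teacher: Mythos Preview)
Your proof is correct and follows essentially the same approach as the paper. The paper performs a single three-fold decomposition $h = h_1 \circ h_2 \circ h_3$ with $h_1 \in \cH^j$, $h_2 \in \cH^m$, $h_3 \in \cH^{n-j-m}$, then bounds $\HN{\kappa_0}{\vphi|_{T^j\circ h(\I)}} \leq \HN{\kappa_0}{\vphi|_{h_2(\I)}}$ and factors the sum; you achieve the same thing via two successive two-fold splits, and the key ingredients (bounded distortion, monotonicity of the H\"older semi-norm under interval restriction, and the uniform bound on $\H^k[\1]$) are identical.
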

We recall that~$\I = [0, 1]$.

\begin{proof}
  We decompose~$h = h_1 \circ h_2 \circ h_3$, where~$h_1\in\cH^j$, $h_2\in\cH^m$ and~$h_3\in\cH^{n-j-m}$. We have
  $$ \abs{h'(0)} \ll \abs{h_1'(0)h_2'(0)h_3'(0)}, \qquad \HN{\kappa_0}{\vphi|_{T^j\circ h(\I)}} \leq \HN{{\kappa_0}}{\vphi|_{h_2(\I)}}, $$
  so that
  $$ \sum_{h\in\cH^n} \abs{h'(0)} \HN{{\kappa_0}}{\vphi|_{T^j\circ h(\I)}}^{\lambda_0} \ll \Big(\sum_{h_1\in\cH^j} \abs{h_1'(0)}\Big) \Big(\sum_{h_2\in\cH^m} \abs{h_2'(0)} \HN{{\kappa_0}}{\vphi_{h_2(\I)}}^{\lambda_0}\Big) \Big(\sum_{h_3\in\cH^{n-j-m}} \abs{h_3'(0)}\Big). $$
  The sums over~$h_1$ and~$h_3$ are uniformly bounded by~\eqref{eq:bound-norms-PiH}. The sum over~$h_2$ is finite by our hypothesis~\eqref{eq:bound-Ckappa-rough} and the triangle inequality (\emph{cf.}~\ref{eq:triangleineq-hypoth-phi}).
\end{proof}

\subsubsection{Bound on the $L^2$ norm}

We recall that~$A_\sigma$ is an upper-bound for the norm~$\|H_{\sigma, 0}\|_{0}$ provided in~\eqref{eq:bound-norms-PiH}.

\begin{lemma}\label{lem:bound-l2-norm}
  For some~$\delta, t_0>0$ and~$\rho\in[0, 1)$, whenever $\abs{\sigma-2}\leq \delta$,~$\abs{\tau}\geq 1$,~$\nvt\leq t_0$ and~$\ell\in\N$, we have
  $$ \Big(\int_{[0,1]} \abs{\Pi_{s,\vt}^{\ell}[f]}^2 \df\nu \Big)^{1/2} \ll A_{2\sigma-2}^{m\ell/2} \big(\big(\abs{\tau}^{-\kappa/2} + \rho^{m\ell/4}\big)\norm{f}_0 + \rho^{\kappa m\ell/2}\abs{\tau}^{-\kappa/2}\norm{f}_1\big). $$
\end{lemma}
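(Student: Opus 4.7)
The plan is to expand $\abs{\Pi_{s,\vt}^{\ell}[f](x)}^2$ as a double sum over pairs of inverse branches and exploit the oscillation in $\tau$ via a Dolgopyat-type argument combined with a pointwise Cauchy-Schwarz identity for the transfer operator. Writing
$$\Pi_{s,\vt}^{\ell}[f](x) = \sum_{h\in\cH^{m\ell}} \abs{h'(x)}^{s/2}\Phi_{\vt,h}(x)f(h(x)),$$
where $\Phi_{\vt,h}$ is unimodular and accumulates the factors $\e^{i\langle\vt,\vphi_j\rangle}$ along the composition, squaring and integrating yields a double sum indexed by pairs $(h_1,h_2)\in(\cH^{m\ell})^2$ of oscillatory integrals with phase $\tau\psi_{h_1,h_2}(x)/2$, where $\psi_{h_1,h_2}(x):=\log\abs{h_1'(x)/h_2'(x)}$. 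By the bounded distortion property, $\psi'_{h_1,h_2}$ has magnitude at least $\Delta(h_1,h_2)$ and is monotone on $[0,1]$, putting Lemma~\ref{lem:vdC} at our disposal.

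I would then dichotomize pairs according to whether $\abs{\tau}\Delta(h_1,h_2)\geq 1$ or not. In the oscillatory regime, Lemma~\ref{lem:vdC} yields a saving of $(\abs{\tau}\Delta(h_1,h_2))^{-\kappa}$ against the $\kappa$-Hölder constant of the amplitude $\overline{\Phi_{\vt,h_2}}\Phi_{\vt,h_1}\cdot(\overline{f}\circ h_2)\cdot(f\circ h_1)$. This norm is controlled via the Leibniz rule~\eqref{eq:Ckappa-product}, the chain rule~\eqref{eq:Ckappa-composition} together with the contraction~\eqref{eq:bound-Cgammah} (yielding $\HN{\kappa}{f\circ h_j}\ll\rho^{\kappa m\ell}\norm{f}_1$), and the estimate~\eqref{eq:Ckappa-exp-if} applied with $\lambda=\lambda_0$ to the $\vt$-dependent phases; the sums of Hölder norms of $\vphi$ along branches that arise are bounded by Lemma~\ref{lem:sumbranch-ckappa}. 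In the non-oscillatory regime, the integral is bounded trivially. The summation over pairs with $\Delta(h_1,h_2)\leq u$ is controlled by Lemma~\ref{lem:sumbranch-delta}, giving $\ll \rho^{m\ell}+u$; optimizing $u=\abs{\tau}^{-1}$ produces the Dolgopyat-type gain $\abs{\tau}^{-\kappa}+\rho^{m\ell/2}$ at the level of the squared norm, corresponding to $\abs{\tau}^{-\kappa/2}+\rho^{m\ell/4}$ after taking the square root.

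The prefactor $A_{2\sigma-2}^{m\ell/2}$ is extracted from the pointwise Cauchy-Schwarz inequality $\abs{\H_{s,\vt}[g](x)}^2\leq\H_{2\sigma-2}[\1](x)\cdot\H_2[\abs{g}^2](x)$, which, iterated along the composition and combined with the invariance $\int\H_2[\abs{g}^2]\df\nu=\int\abs{g}^2\df\nu$, yields an $L^2$ amplification factor of $A_{2\sigma-2}^{m\ell}$ on $\norm{f}_0^2$. The main obstacle is ensuring that this sharp $\sigma$-dependence, rather than the cruder $A_\sigma^{2m\ell}$ arising from the direct summation of $\abs{h_1'(0)h_2'(0)}^{\sigma/2}$, emerges simultaneously with the oscillatory gain: one has to interleave the Cauchy-Schwarz step at an intermediate depth of the composition so that the amplitude bounds obtained from van der Corput transfer correctly through the split, and the two gains multiply rather than simply adding.
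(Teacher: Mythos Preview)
Your first two paragraphs are correct and coincide with the paper's argument: expand the square as a double sum over pairs $(h_1,h_2)\in(\cH^{m\ell})^2$, apply Lemma~\ref{lem:vdC} when $\Delta(h_1,h_2)$ is large, bound trivially when it is small, control the Hölder constants of the amplitudes via~\eqref{eq:Ckappa-product}--\eqref{eq:Ckappa-exp-if} and Lemma~\ref{lem:sumbranch-ckappa}, sum over pairs via Lemma~\ref{lem:sumbranch-delta}, and optimize the threshold at $u=\abs{\tau}^{-1}$.

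Your third paragraph, however, misidentifies the mechanism that produces the prefactor $A_{2\sigma-2}^{m\ell/2}$. The paper does \emph{not} use the pointwise inequality $\abs{\H_{s,\vt}[g]}^2\le\H_{2\sigma-2}[\1]\cdot\H_2[\abs g^2]$ here (that device appears only in the \emph{next} lemma, to pass from $L^2$ to $L^\infty$), and no ``interleaving at intermediate depth'' is required. Instead, the $A_{2\sigma-2}$ factor is obtained by a Cauchy--Schwarz step applied directly to the double sum over pairs, \emph{after} the full expansion. Concretely, the sums that arise have the shape
\[
K(u)=\ssum{h_1,h_2\in\cH^{m\ell}\\ \Delta(h_1,h_2)\le u}\abs{h_1'(0)h_2'(0)}^{\sigma/2}\,w(h_1),
\]
with $w(h_1)$ a weight bounded, after Lemma~\ref{lem:sumbranch-ckappa}, in the $\abs{h_1'(0)}$-weighted $\ell^1$ norm. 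One splits the exponent as $\abs{h_j'(0)}^{\sigma/2}=\abs{h_j'(0)}^{1/2}\abs{h_j'(0)}^{(\sigma-1)/2}$ and applies Cauchy--Schwarz pairing the weights asymmetrically:
\[
K(u)\ll\Big(\sum_{h_1,h_2}\abs{h_1'(0)}\,w(h_1)^2\,\abs{h_2'(0)}^{\sigma-1}\Big)^{1/2}\Big(\ssum{h_1,h_2\\ \Delta\le u}\abs{h_1'(0)}^{\sigma-1}\abs{h_2'(0)}\Big)^{1/2}.
\]
In each factor exactly one branch-sum carries the weight $\abs{h'(0)}$, so that Lemma~\ref{lem:sumbranch-ckappa} (first factor) and Lemma~\ref{lem:sumbranch-delta} (second factor) apply; the other branch-sum carries $\abs{h'(0)}^{\sigma-1}$, giving $\sum_{h\in\cH^{m\ell}}\abs{h'(0)}^{\sigma-1}\le A_{2\sigma-2}^{m\ell}$. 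This yields $K(u)\ll A_{2\sigma-2}^{m\ell}(\rho^{m\ell/2}+u^{1/2})$ directly, and the oscillatory gain and the sharp $\sigma$-dependence coexist without further manoeuvring. Your ``interleaving'' suggestion is thus unnecessary; the obstacle you describe is an artefact of applying Cauchy--Schwarz too early (pointwise, before expanding), which indeed destroys the $\tau$-oscillation.
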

\begin{proof}
  Changing~$f$ to~$\bar{f}$, $\vt$ to~$-\vt$ and taking conjugates if necessary, we may assume that~$\tau\geq 0$.
  Define, for all~$\ell\in\N_{>0}$, $\vpsi_\ell := \sum_{0\leq j < \ell} \vphi \circ T^{mj}$,
  so that
  $$ \Pi_{s,\vt}^\ell[f] = \sum_{h\in\cH^{m\ell}} \e^{i\langle{\vt,\vpsi_\ell\circ h}\rangle} \abs{h'}^{s/2} (f \circ h). $$
  We note that for all~$h\in\cH^{m\ell}$, by~\eqref{eq:Ckappa-composition} and \eqref{eq:Ckappa-exp-if}, we have
  \begin{equation}\label{sec:bound-eit-psi}
    \begin{aligned}
      \HN{\kappa}{\e^{i\langle{\vt, \vpsi_\ell \circ h}\rangle}} \leq {}& \sum_{0\leq j < \ell} \HN{1}{T^{mj} \circ h}^\kappa \HN{\kappa}{\e^{i\langle \vt, \vphi \rangle}|_{T^{mj}\circ h(\I)}} \\
      \ll {}& \sum_{0\leq j < \ell} \rho^{m(\ell-j)\kappa} \HN{{\kappa_0}}{\vphi|_{T^{mj}\circ h(\I)}}^{\kappa/\kappa_0}.
    \end{aligned}
  \end{equation}
  For~$h_1, h_2 \in\cH^{m\ell}$, let
  $$ g_{h_1,h_2} := \e^{i\langle{\vt,\vpsi_\ell\circ h_1 - \vpsi_\ell\circ h_2}\rangle} \abs{h_1' h_2' }^{\sigma/2} (f\circ h_1)\overline{(f\circ h_2)}. $$
  This defines a function in~$\Hol^\kappa$. Expanding the square, we find
  $$ \int_{[0,1]} \abs{\Pi_{s,\vt}^\ell[f]}^2 \df\nu = \sum_{h_1, h_2\in\cH^{m\ell}} I(h_1, h_2), \qquad I(h_1, h_2) := \int_0^1 g_{h_1,h_2}(x) \abs{\frac{h_1'(x)}{h_2'(x)}}^{i\tau/2} \df x. $$
  We have, for all~$h_1, h_2\in\cH^{m\ell}$, the trivial bound
  \begin{equation}
    \abs{I(h_1, h_2)} \ll \norm{g_{h_1,h_2}}_\infty.\label{eq:bounds-Ihh-triv}
  \end{equation}
  On the other hand, for all~$h_1, h_2\in\cH^{m\ell}$ satisfying~$\Delta(h_1, h_2)>0$, we have from Lemma~\ref{lem:vdC} the bound
  \begin{equation}
    \abs{I(h_1, h_2)} \ll \frac{\norm{g_{h_1,h_2}}_\infty}{\abs{\tau}\Delta(h_1,h_2)} + \frac{\HN{\kappa}{g_{h_1,h_2}}}{(\abs{\tau} \Delta(h_1,h_2))^\kappa},\label{eq:bound-Ihh-vdc}
  \end{equation}
  The norms are bounded, using~\eqref{eq:bound-Cgammah},~\eqref{eq:Ckappa-product}, \eqref{eq:Ckappa-composition} and~\eqref{sec:bound-eit-psi}, by
  \begin{align}
    \norm{g_{h_1,h_2}}_\infty \ll {}& \norm{f}_\infty^2 \abs{h_1'(0) h_2'(0)}^{\sigma/2}, \label{eq:bounds-ghh} \\
    \HN{\kappa}{g_{h_1,h_2}} \ll {}& \norm{f}_\infty \abs{h_1'(0) h_2'(0)}^{\sigma/2}\big\{(1 + \sum_{h\in\{h_1,h_2\}}\sum_{0\leq j < \ell} \rho^{m(\ell-j)\kappa} \HN{{\kappa_0}}{\vphi|_{T^{mj}\circ h(\I)}}^{\kappa/\kappa_0}) \norm{f}_\infty \notag \\
    {}& \hspace{13em} + \rho^{\kappa m\ell} \HN{\kappa}{f}\big\}. \notag
  \end{align}
  We write~$\HN{\kappa}{f}\norm{f}_\infty \ll \HN{\kappa}{f}^2 + \norm{f}^2_\infty \ll \HN{\kappa}{f/\xi}^2 + \norm{f/\xi}^2_\infty$, which implies the variant
  \begin{equation}
    \HN{\kappa}{g_{h_1,h_2}} \ll \abs{h_1'(0) h_2'(0)}^{\sigma/2}\Big\{\Big(1 + \sum_{h\in\{h_1,h_2\}}\sum_{0\leq j < \ell} \rho^{m(\ell-j)\kappa} \HN{{\kappa_0}}{\vphi|_{T^{mj}\circ h(\I)}}^{\kappa/\kappa_0}\Big)\norm{f}_0^2 + \rho^{\kappa m\ell} \norm{f}_1^2\Big\}.\label{eq:bound-ghh-var}
  \end{equation}
  Next, for all~$u\in[0, \Delta_+]$ (where we recall~\eqref{eq:upperbound-Delta}), we have uniformly
  \begin{align*}
    K(u) := {}& \max_{0\leq j < \ell} \ssum{h_1, h_2 \in \cH^{m\ell} \\ \Delta(h_1,h_2)\leq u} \abs{h_1'(0)h_2'(0)}^{\sigma/2}(1+\HN{{\kappa_0}}{\vphi|_{T^{mj}\circ h_1(\I)}}^{\lambda_0/2}) \\
    \ll {}& \max_{0\leq j < \ell} \Big(\sum_{h_1, h_2\in \cH^{m\ell}} \abs{h_1'(0)}(1+\HN{{\kappa_0}}{\vphi|_{T^{mj}\circ h_1(\I)}}^{\lambda_0})\abs{h_2'(0)}^{\sigma-1}\Big)^{1/2} \\
    {}& \hspace{6em} \times \Big(\ssum{h_1, h_2\in \cH^{m\ell} \\ \Delta(h_1, h_2)\leq u} \abs{h_1'(0)}^{\sigma-1} \abs{h_2'(0)} \Big)^{1/2} \\
    \ll {}& A_{2\sigma-2}^{m\ell} (\rho^{m\ell/2} + u^{1/2})
  \end{align*}
  by Lemmas~\ref{lem:sumbranch-delta} and \ref{lem:sumbranch-ckappa}.
  Let~$\eta\in(0, 1]$ be a parameter. We insert the bounds~\eqref{eq:bounds-ghh} and~\eqref{eq:bound-ghh-var} in~\eqref{eq:bounds-Ihh-triv}, \eqref{eq:bound-Ihh-vdc}, and we sum over~$(h_1, h_2)$. When~$\Delta(h_1, h_2)\leq \eta$, we use the trivial bound~\eqref{eq:bounds-Ihh-triv}, otherwise we use~\eqref{eq:bound-Ihh-vdc}. Using our bond on~$K(u)$ above, the symmetry~$h_1\leftrightarrow h_2$, the fact that~$\kappa/\kappa_0 \leq \lambda_0/2$, and partial summation, we find
  \begin{align*}
    A_{2\sigma-2}^{-m\ell}\sum_{h_1, h_2\in\cH^{m\ell}} I(h_1, h_2)
    \ll {}& \norm{f}_0^2\Big(K(\eta) + \frac{K(\Delta_+)}{\tau^\kappa} + \int_\eta^{\Delta_+} \Big(\frac1{\tau u} + \frac{\kappa}{(\tau u)^\kappa}\Big) \frac{K(u)\df u}{u}\Big) \\
    {}& \hspace{1em} + \frac{\rho^{\kappa m\ell}\norm{f}_1^2}{\tau^\kappa}\Big(K(\Delta_+) + \kappa\int_\eta^{\Delta_+} \frac{K(u)\df u}{u^{\kappa+1}} \Big) \\
    \ll {}& \norm{f}_0^2 \Big((\rho^{m\ell/2}+\eta^{1/2})\Big(1 + \frac1{\tau\eta} + \frac1{(\tau\eta)^\kappa}\Big) + \frac1{\tau^\kappa}\Big) \\
    {}& \hspace{1em} + \norm{f}_1^2 \rho^{\kappa m\ell} \Big(\frac{1}{\tau^\kappa} + \frac{\rho^{m\ell/2}+\eta^{1/2}}{(\tau\eta)^\kappa}\Big).
  \end{align*}
  Choosing~$\eta = 1/\tau$, we obtain
  \begin{align*}
    \Big(\int_{[0,1]} \abs{\Pi_{s,\vt}^{\ell}[f]}^2 \df\nu\Big)^{1/2} \ll {}& A_{2\sigma-2}^{m\ell}\big(\big(\tau^{-\kappa/2} + \rho^{m\ell/4}\big)\norm{f}_0 + \tau^{-\kappa/2}\rho^{\kappa m\ell/2}\norm{f}_1\big)
  \end{align*}
  as claimed.
\end{proof}

\subsubsection{Bound on the~$L^\infty$ norm}

Next, we transfer the $L^2$ bound relative to the invariant measure into an~$L^\infty$ bound, following ideas of Dolgopyat~\cite{Dolgopyat1998} adapted to this context by Baladi and Vallée~\cite[section~3.3]{BaladiVallee2005}.

\begin{lemma}\label{lem:bound-linfty}
  For some~$\rho\in[0,1)$ and~$c_0, \delta, \tau_0>0$, depending on~$\eta$ and~$\kappa$ at most, whenever
  $$ \sigma \geq 2 - \delta, \qquad \tau\geq \tau_0, \qquad \nvt\leq 1, $$
  then letting~$n = \floor{c_0 \log\tau}$, we have
  $$ \norm{\Pi_{s,\vt}^n[f]}_0 \leq \rho^n \norm{f}_{1,\tau}. $$
\end{lemma}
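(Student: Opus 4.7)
The strategy is the classical Dolgopyat--Baladi--Vallée transfer from the $L^2$-bound of Lemma~\ref{lem:bound-l2-norm} to a sup-norm bound, via Cauchy--Schwarz applied to a few additional outer iterates. Write $n=n_1+n_2$ where both $n_1$ and $n_2$ will be chosen as fixed positive fractions of $n$, hence both $\Theta(\log\tau)$. Since
$$ \Pi_{s,\vt}^n[f](x)=\sum_{h_1\in\cH^{mn_1}} \e^{i\langle\vt,\vpsi_{n_1}\circ h_1\rangle}\,\abs{h_1'(x)}^{s/2}\,\Pi_{s,\vt}^{n_2}[f](h_1(x)), $$
Cauchy--Schwarz with weights $\abs{h_1'(x)}^{\sigma/2}$ yields the pointwise bound
$$ \abs{\Pi_{s,\vt}^n[f](x)}^2 \leq \Pi_{\sigma,0}^{n_1}[\1](x)\cdot \Pi_{\sigma,0}^{n_1}\big[\abs{\Pi_{s,\vt}^{n_2}[f]}^2\big](x). $$

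For the two factors on the right, I would use the spectral structure of $\Pi_{\sigma,0}$, viewed as a perturbation of $\Pi_{2,0}$ via~\eqref{eq:bound-normdiff-Pi-sigma}. By the dominant spectral decomposition~\eqref{eq:pi20-spectral-decomp} and the rough spectral bound~\eqref{eq:rough-spectralbound}, applied after replacing $\Pi_{\sigma,0}^{n_1}$ by $\Pi_{2,0}^{n_1}$ at the cost of a tame factor $A_\sigma^{n_1 m}\leq \tau^{O(\abs{\sigma-2})}$, one obtains
$$ \Pi_{2,0}^{n_1}[\1](x)\ll \xi(x),\qquad \Pi_{2,0}^{n_1}\big[\abs{\Pi_{s,\vt}^{n_2}[f]}^2\big](x)\ll \xi(x)\int_0^1\abs{\Pi_{s,\vt}^{n_2}[f]}^2\df\nu + \rho_1^{n_1}\norm{\Pi_{s,\vt}^{n_2}[f]}_0^2, $$
for some $\rho_1\in[0,1)$ independent of~$\tau$. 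Dividing by $\xi(x)^2$ and taking the supremum in $x$ gives
$$ \norm{\Pi_{s,\vt}^n[f]}_0^2 \ll \tau^{O(\abs{\sigma-2})}\Big(\int_0^1\abs{\Pi_{s,\vt}^{n_2}[f]}^2\df\nu + \rho_1^{n_1}\norm{\Pi_{s,\vt}^{n_2}[f]}_0^2\Big). $$

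The two terms on the right are then handled separately. For the integral term, apply Lemma~\ref{lem:bound-l2-norm} with $\ell=n_2$: choosing the constant $c_0$ large enough so that $\rho_0^{mn_2/4}\leq \tau^{-\kappa/2}$ (where $\rho_0$ is the constant in that lemma), the $L^2$-bound reduces to $\tau^{-\kappa}A_{2\sigma-2}^{mn_2}(\norm{f}_0^2+\tau^{-2\kappa}\norm{f}_1^2) \leq \tau^{-\kappa+O(\abs{\sigma-2})}\norm{f}_{1,\tau}^2$. For the second term, the crude bound $\norm{\Pi_{s,\vt}^{n_2}[f]}_0\leq A_\sigma^{mn_2}\norm{f}_0\leq \tau^{O(\abs{\sigma-2})}\norm{f}_0$ reduces it to $\rho_1^{n_1}\tau^{O(\abs{\sigma-2})}\norm{f}_0^2$. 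Picking~$n_1$ as a suitably large constant multiple of~$n$ (so that $\rho_1^{n_1}\leq \tau^{-\kappa}$) and then~$\delta$ small enough that all the exponents $O(\abs{\sigma-2})$ are dominated by $\kappa/2$, we obtain $\norm{\Pi_{s,\vt}^n[f]}_0^2 \ll \tau^{-\kappa/2}\norm{f}_{1,\tau}^2$, and choosing $\rho\in[0,1)$ with $\log(1/\rho)<\kappa/(5c_0)$ converts the $\tau^{-\kappa/4}$ decay into the desired $\rho^n$ decay since $n\asymp c_0\log\tau$.

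The main obstacle is the triple balance between: (i) the $L^2$-decay, which demands $n_2/\log\tau$ large enough; (ii) the spectral-gap error $\rho_1^{n_1}$, which demands $n_1/\log\tau$ large enough; and (iii) the factor $A_\sigma^{mn}=\tau^{O(\abs{\sigma-2}\log\tau)}$ coming from perturbing $\Pi_{\sigma,0}$ to $\Pi_{2,0}$, which forces us to keep $\abs{\sigma-2}$ genuinely small. These constraints are compatible because $A_2=1$ and $\sigma\mapsto A_\sigma$ is Lipschitz at~$2$, so $\delta$ can be chosen small enough (depending on~$c_0$ and~$\kappa$) that all three exponents compare favourably; the rest is a careful bookkeeping of constants.
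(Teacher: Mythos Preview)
Your overall architecture is exactly that of the paper: split $n=n_1+n_2$, apply Cauchy--Schwarz on the outer $n_1$ iterates, feed the $L^2$-bound of Lemma~\ref{lem:bound-l2-norm} into the integral term, and balance constants. The bookkeeping at the end (choice of $c_0$, $\delta$, $\rho$) is correct in substance; the slip ``large constant multiple of~$n$'' should of course read ``fraction of~$n$''.

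There is, however, a genuine soft spot in the line ``replacing $\Pi_{\sigma,0}^{n_1}$ by $\Pi_{2,0}^{n_1}$ at the cost of a tame factor $A_\sigma^{n_1 m}$''. No such pointwise domination holds: for $\sigma<2$ the weight $|h'|^{\sigma/2}$ can exceed $|h'|$ by an unbounded factor (the ratio $(n+x)^{2-\sigma}$ is unbounded in~$n$), so $\Pi_{\sigma,0}^{n_1}[g]\le C\,\Pi_{2,0}^{n_1}[g]$ is simply false for general $g\ge 0$. Since the rough spectral bound~\eqref{eq:rough-spectralbound} is stated only for~$\Pi_{2,0}$, your second displayed factor $\Pi_{\sigma,0}^{n_1}\big[|\Pi_{s,\vt}^{n_2}[f]|^2\big]$ cannot be handled as written.

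The paper avoids this issue by choosing the Cauchy--Schwarz split asymmetrically: write $|h'|^{\sigma/2}=|h'|^{(\sigma-1)/2}\cdot|h'|^{1/2}$ rather than your symmetric $|h'|^{\sigma/4}\cdot|h'|^{\sigma/4}$. This yields
\[
\frac{\abs{\Pi_{s,\vt}^{n_1}[g](x)}}{\xi(x)}\le\Big(\sum_{h\in\cH^{mn_1}}|h'(x)|^{\sigma-1}\Big)^{1/2}\Big(\sum_{h\in\cH^{mn_1}}|h'(x)|\,(|g|^2\circ h)(x)\Big)^{1/2},
\]
so that the second factor is exactly $\H^{mn_1}[|g|^2](x)=\Pi_{2,0}^{n_1}[|g|^2](x)$, to which~\eqref{eq:rough-spectralbound} applies directly; the first factor is bounded by $A_{2\sigma-2}^{mn_1}$ via~\eqref{eq:bound-norms-PiH}. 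With this one change your argument goes through verbatim. (Alternatively one can prove an analogue of~\eqref{eq:rough-spectralbound} for $\Pi_{\sigma,0}$ via the perturbation theory of Lemma~\ref{lem:small-height}, but the asymmetric split is cheaper.)
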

\begin{proof}
  By using the Cauchy--Shwarz inequality, as in~\cite[Lemma~1]{BaladiVallee2005}, for all~$x\in[0,1]$ and~$f\in\Hol^\kappa$, we have
  \begin{align*}
    \frac{\abs{\Pi_{s,\vt}^k[f](x)}}{\xi(x)} \leq {}& 
    \Big(\sum_{h\in\cH^{mk}} \abs{h'(x)}^{\sigma-1} \Big)^{1/2} \Big(\sum_{h\in\cH^{mk}} \abs{h'(x)} (\abs{f}^2\circ h)(x) \Big)^{1/2} \\
    \ll {}& A_{2\sigma-2}^{mk/2} \Big(\int_0^1 \abs{f}^2\df\nu + \rho_1^{mk} \norm{f}_0^2 \Big)^{1/2}
  \end{align*}
  for some~$\rho_1\in[0,1)$ independent of~$k$. We use this with~$f$ replaced by~$\Pi_{s,\vt}^\ell[f]$, with~$k, \ell$ being any choice with~$k+\ell=n$ and~$\ell = k + O(1)$. By Lemma~\ref{lem:bound-l2-norm} and the bound~\eqref{eq:bound-norms-PiH}, we deduce that for all small enough~$\delta\geq 0$, if~$\sigma\geq 2-\delta$, then
  $$ \norm{\Pi_{s,\vt}^{n}[f]}_0 \ll \e^{O(\delta n)}\big((\tau^{-\kappa/2} + \rho_1^{mn/8})\norm{f}_0 + \rho_1^{mn\kappa/4}\tau^{-\kappa/2} \norm{f}_1\big).$$
  By choosing~$n = c \log \abs{\tau} + O(1)$ with~$c = 4(m\abs{\log \rho_1})^{-1}$, and then~$\tau_0$ large enough and~$\delta$ small enough in terms of~$\kappa$, we may ensure that
  $$ \norm{\Pi_{s,\vt}^{n}[f]}_0 \leq \rho^n\norm{f}_{1,\tau}, $$
  with~$\rho = \rho_1^{m\kappa/10} < 1$ and as claimed.
\end{proof}

\subsubsection{Proof of Lemma~\ref{lem:large-height}}

Iterating the bound of Lemma~\ref{lem:bound-norm1}, and using~\eqref{eq:bound-norms-PiH}, we have for~$\delta$ small enough and all~$n\geq 0$ the bound
$$ \norm{\Pi_{s,\vt}^n[f]}_1 \ll \e^{O(\delta n)} \abs{\tau}^\kappa \norm{f}_0 + \rho^n\norm{f}_1, $$
for some~$\rho\in[0,1)$. We replace~$f$ by~$\Pi_{s,\vt}^n[f]$ and use Lemmas~\ref{lem:bound-linfty} and~\ref{lem:bound-norm1}. We find that for some constants~$\tau_0\geq 0$, $c_0>0$ and~$\rho\in[0,1)$, if~$\delta$ is small enough and~$n = \floor{c_0 \log\tau}$, then
\begin{align*}
  \norm{\Pi_{s,\vt}^{2n} f}_{1,\tau}
  \ll {}& (\rho^n + \e^{O(\delta n)})\norm{\Pi_{s,\vt}^n f}_0 + \abs{\tau}^{-\kappa}\rho^n \norm{\Pi_{s,\vt}^n f}_1 \\
  \ll {}& (\rho^{2n} + \e^{O(\delta n)}\rho^n)\norm{f}_0 + \abs{\tau}^{-\kappa}(\rho^{2n} + \e^{O(\delta n)}\rho^n)\norm{f}_1.
\end{align*}
At the cost of choosing~$c_0$ large enough and~$\delta$ small enough in terms of the implied constants, we obtain
$$ \norm{\Pi_{s,\vt}^{2n} f}_{1,\tau} \leq \rho^{n/2}\norm{f}_{1,\tau}. $$
By iterating, this bounds also holds for~$n = k\floor{c_0 \log \tau}$, $k\in\N$, from which we deduce by Gelfand's inequality that $\srd(\Pi_{s,\vt})\leq \rho^{1/4}$, and~$\norm{(\Id-\Pi_{s,\vt}^{2n})^{-1}}_{1,\tau} \ll 1$. Finally, from the bounds
$$ \norm{(\Id-\Pi_{s,\vt})^{-1}}_{1,\tau} \leq \norm{(\Id-\Pi_{s,\vt}^{2n})^{-1}}_{1,\tau} \sum_{0\leq j < 2n} \norm{\Pi_{s,\vt}^j}_{1,\tau} $$
and~$\norm{\Pi_{s,\vt}^j}_{1,\tau} \ll \e^{O(\abs{\sigma-2}j)}$, we get the claimed result.

\subsection{Deduction of the meromorphic continuation}

\begin{proposition}\label{prop:mero-continuation}
  For some~$\tau_0, t_0, \delta>0$, and all~$\nvt\leq t_0$, the function~$s\mapsto \gS(s,\vt)$, initially only defined for~$\Re(s)>2$, has a meromorphic continuation to the set
  $$ H := \Big\{s\in\C,\ s = \sigma + i\tau,\ \sigma \geq 2-\delta \}, $$
  with possible poles occuring only for~$\abs{\tau} < \tau_0$ and~$\lambda(s, t) = 1$.
  The meromorphic continuation of~$s\mapsto \gS(s, \vt)$ is bounded uniformly in~$\Re(s)\geq 2-\delta$ and~$\abs{\tau}\geq \tau_0$ by
  \begin{equation}
    \abs{\gS(s, \vt)} \ll \abs{\tau}^{O(\max(0, 2-\sigma))}\log(\abs{\tau}+2).\label{eq:bound-gS-high}
  \end{equation}

  More precisely, for~$\abs{\tau} \leq \tau_0$, the function
  \begin{equation}
    s \mapsto \gS(s,\vt) -  \frac{\lambda(s,\vt)}{1-\lambda(s,\vt)}(\Pi^{(0)}_{s,\vt} + \Pi^{(1)}_{s,\vt} + \dotsb + \Pi^{(m-1)}_{s,\vt})\P_{s,\vt}[\1](1)\label{eq:extractpole-S}
  \end{equation}
  has an analytic continuation to~$\sigma\geq 2-\delta$ and~$\abs{\tau}\leq \tau_0$, and is uniformly bounded there.
\end{proposition}

\begin{proof}
  By Lemma~\ref{lem:S-Pi}, we have for~$\Re(s)>1$
  \begin{equation}
    \gS(s, \vt) = (\Pi_{s, \vt}^{(0)} + \dotsb + \Pi_{s,\vt}^{(m-1)})\sum_{j\geq 0} \Pi_{s,\vt}[\1](0).\label{eq:recall-sum-gS}
  \end{equation}
  Then Lemma~\ref{lem:small-height} yields the analytic continuation of~\eqref{eq:extractpole-S} for~$|\tau|\leq \tau_0$ for some~$\tau_0>0$. Then Lemma~\ref{lem:large-height} ensures the existence of~$\tau_1>0$ such that the sum over~$j$ in~\eqref{eq:extractpole-S} converges uniformly over compacts in the region~$\sigma\geq 2-\delta$ and~$\abs{\tau} \geq \tau_1$, and yields the bound~\eqref{eq:bound-gS-high}. Finally, applying Lemma~\ref{lem:moderate-height} with the values of~$\tau_0$ and~$\tau_1$ gives the same conclusion for~$\tau_0 \leq \abs{\tau} \leq \tau_1$. The conjunction of these three cases covers the whole half-plane~$H$.
\end{proof}

\section{Asymptotic behaviour of the leading eigenvalue}\label{sec:asympt-behav-lead}

In this section, we study the behaviour, for small~$t$ and~$s$ close to~$2$, of the leading eigenvalue~$\lambda(s,\vt)$. The estimates in this section will reduce the problem to the estimation as~$\vt\to 0$ of the integral
$$ \int_0^1 \e^{i\langle \vt, \vphi(x) \rangle} \xi(x) \df x, $$
where we recall the notation~\eqref{eq:def-vgamtot}. We recall the hypotheses~\eqref{eq:bound-norm-rough}, \eqref{eq:bound-Ckappa-rough}.

\subsection{Perturbation theory and existence}

Let
\begin{equation}
  \fd := -m \int_0^1 \log(x) \xi(x) \df x = \frac{m\pi^2}{12\log 2}.\label{eq:def-fd}
\end{equation}

\begin{lemma}\label{lem:approx-partialder}
  For all small enough~$\eps>0$, there exists~$t_0>0$ such that whenever~$\abs{s-2}\leq\eps$ and~$\nvt\leq t_0$, we have
  \begin{equation}
    \partial_{10}\lambda(s,\vt) = -\fd + O(\eps),\label{eq:approx-partial10lambda}
  \end{equation}
  and
  \begin{equation}
    \lambda(s,\vt) - 1 = \big({-\fd} + O(\eps)\big) (s-2) + O(\eps).\label{eq:taylor1-lambda}
  \end{equation}
\end{lemma}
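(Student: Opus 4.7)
The plan is to pin down the value of $\partial_{10}\lambda$ at the distinguished point $(s,\vt)=(2,0)$ using the eigenvalue-perturbation formula of Kato's theory (already invoked in Lemma~\ref{lem:small-height}), and then propagate this via the norm-continuity estimates of Lemma~\ref{lem:perturbationbound-norm}.

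First I would compute $\partial_{10}\lambda(2,0)$ directly. At the central point, $\lambda(2,0)=1$ is a simple isolated eigenvalue of $\Pi_{2,0}$ with right eigenfunction $\xi$ and left eigenfunctional $f\mapsto\int_{[0,1]}f\df\nu$; these are in duality since $\int_{[0,1]}\xi\df\nu=1$, and the identification of the left eigenfunctional is a consequence of the fact that $\H$ is the transfer operator of $T$ with respect to Lebesgue measure (so $\int\H[g]\df\nu=\int g\df\nu$). The standard perturbation formula then yields
\begin{equation*}
  \partial_{10}\lambda(2,0)=\int_{[0,1]}\bigl(\partial_{10}\Pi_{s,0}\bigr)\bigl|_{s=2}[\xi]\,\df\nu.
\end{equation*}
Differentiating \eqref{eq:def-Pi-st} gives $\partial_{10}\Pi_{s,0}|_{s=2}[f]=\H^m[\log(\pdt)\,f]$, and applying $\int\cdot\df\nu$ to both sides, together with $\log\pdt=\sum_{0\leq j<m}\log\circ T^j$ and the $T$-invariance of the Gauss measure $\xi\df\nu$, one gets
\begin{equation*}
  \partial_{10}\lambda(2,0)=\int_{[0,1]}\log(\pdt)\,\xi\df\nu=m\int_0^1\log(x)\,\xi(x)\df x=-\fd,
\end{equation*}
by the definition~\eqref{eq:def-fd}.

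Next, to extend this identity to a neighbourhood of $(2,0)$ I would invoke the joint continuity of $\partial_s\lambda(s,\vt)$ afforded by Kato's theory together with Lemma~\ref{lem:perturbationbound-norm}. The perturbation formula gives $\partial_{10}\lambda(s,\vt)$ as an integral of $\partial_{10}\Pi_{s,\vt}$ against the spectral projectors $\P_{s,\vt}$; all three vary continuously in operator norm, with moduli of continuity controlled by the bounds \eqref{eq:bound-normdiff-Pi-t}--\eqref{eq:bound-normdiff-Pi-sigma} and their derivative versions (obtained by differentiating under the convergent $\sum_{h\in\cH^m}$-expansion, exactly as in the proof of Lemma~\ref{lem:perturbationbound-norm}). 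This gives
\begin{equation*}
  \partial_{10}\lambda(s,\vt)-\partial_{10}\lambda(2,0)=O\bigl(|s-2|+\nvt+\nvt^{\alpha_0-\ee}\bigr),
\end{equation*}
which is $O(\ee)$ once $t_0$ is chosen small enough in terms of $\ee$, proving \eqref{eq:approx-partial10lambda}.

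Finally, for the Taylor expansion \eqref{eq:taylor1-lambda} I would split
\begin{equation*}
  \lambda(s,\vt)-1=[\lambda(s,\vt)-\lambda(2,\vt)]+[\lambda(2,\vt)-\lambda(2,0)].
\end{equation*}
By analyticity in $s$ (Lemma~\ref{lem:small-height}), the first bracket equals $(s-2)\int_0^1\partial_{10}\lambda(2+u(s-2),\vt)\df u=(-\fd+O(\ee))(s-2)$ by the estimate just proved, which contributes the first summand. For the second bracket, Kato's resolvent representation $\lambda(2,\vt)=\mathrm{tr}(\P_{2,\vt}\Pi_{2,\vt})$ together with the operator-norm bound $\|\Pi_{2,\vt}-\Pi_{2,0}\|_0\ll\nvt+\nvt^{\alpha_0-\ee}$ from Lemma~\ref{lem:perturbationbound-norm} yields $|\lambda(2,\vt)-1|\ll\nvt+\nvt^{\alpha_0-\ee}$, which accounts for the additive error term.

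The main technical point is the first one: establishing that the natural "candidate" for the left eigenfunctional at $(2,0)$ really is $\int\cdot\,\df\nu$, and that the eigenvalue-derivative identity above holds in the $\Hol^\kappa$-setting. This is where one must appeal to the Ruelle--Perron--Frobenius structure recalled at \eqref{eq:pi20-spectral-decomp}, and verify that the spectral projector $\P_{2,0}[f]=(\int f\df\nu)\xi$ acts as claimed. Once this point is granted, the rest of the argument reduces to differentiating a convergent series termwise and invoking the uniform bounds already proved in Section~\ref{sec:setting}.
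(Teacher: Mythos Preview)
Your proposal is correct and follows essentially the same approach as the paper: both compute $\partial_{10}\lambda(2,0)=-\fd$ via the pairing of the left eigenfunctional $f\mapsto\int f\,\df\nu$ against $\partial_s\Pi_{s,0}|_{s=2}[\xi]=\H^m[(\log\pdt)\xi]$, and then extend to a neighbourhood using the perturbation bounds of Lemma~\ref{lem:perturbationbound-norm}. The only cosmetic difference is that the paper derives this identity by differentiating the integrated eigenvalue equation $\int g_{s,\vt}f_{s,\vt}\,\df\nu=\lambda(s,\vt)\int f_{s,\vt}\,\df\nu$ directly (obtaining a formula valid for all $(s,\vt)$, into which the bound $\|f_{s,\vt}-\xi\|_\infty\ll|s-2|+\nvt+\nvt^{\alpha_0-\ee}$ is then inserted), whereas you invoke the abstract Kato formula at the centre and argue continuity afterwards.
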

\begin{proof}
  Let~$f_{s,\vt} = \P_{s,\vt}[\xi]$ denote an eigenfunction of~$\Pi_{s,\vt}$ associated with the eigenvalue~$\lambda(s,\vt)$. By~Lemma~\ref{lem:perturbationbound-norm} and~\cite[Theorem~2.6, estimation of~$P_L$]{Kloeckner2019}, we have
  \begin{equation}
    \norm{f_{s,\vt}-\xi}_\infty \ll_\eps \abs{s-2} + \nvt + \nvt^{\alpha_0-\eps}.\label{eq:bound-fst-xi}
  \end{equation}
  On the other hand, differentiating the eigenvalue equation~$\H_{2,0}[g_{s,\vt} f] = \lambda(s, \vt) f_{s,\vt}$ and integrating with respect to the Lebesgue measure, we get
  $$ \partial_{10}\lambda(s,\vt) \int_{[0,1]} f_{s,\vt}\df\nu = \int_{[0,1]}\Big((\log \pdt)g_{s,\vt} f_{s,\vt} + (g_{s,\vt}-\lambda(s,\vt))\partial_{10}f_{s,\vt}\Big)\df\nu. $$
  Here we recall the notation~\eqref{eq:def-pdt}. Setting~$(s,\vt)=(2,{\bf 0})$, with~$f_{2,\mathbf{0}}=\xi$ and~$g_{2,\mathbf{0}}=1$, gives~$\partial_{10}\lambda(2,\mathbf{0})=-\fd$. Using the bound~\eqref{eq:bound-fst-xi}, we get the approximation~\eqref{eq:approx-partial10lambda}.
\end{proof}

\begin{lemma}
  For all~$\eta>0$, there exists~$t_0>0$ and a unique function~$s_0:[-t_0,t_0]^d\to\C$ such that~$s_0(0) = 2$ and, for~$\nvt\leq t_0$,
  $$ \abs{s_0(\vt) - 2} \leq \eta, \qquad \lambda(s_0(\vt), \vt) = 1. $$
\end{lemma}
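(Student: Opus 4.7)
The plan is to apply Rouché's theorem to the function $s \mapsto \lambda(s, \vt) - 1$, viewed as an analytic function of $s$ on a small disk around $s = 2$, with $\vt$ as a parameter close to $\mathbf{0}$. Analyticity in $s$ (for $\vt$ fixed and small) is guaranteed by Lemma~\ref{lem:small-height}.

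First I would verify that $s = 2$ is a simple zero of $s \mapsto \lambda(s, \mathbf{0}) - 1$. This is immediate from the spectral decomposition \eqref{eq:pi20-spectral-decomp}, which gives $\lambda(2, \mathbf{0}) = 1$, together with Lemma~\ref{lem:approx-partialder}, which yields $\partial_{10}\lambda(2, \mathbf{0}) = -\fd \neq 0$. In particular, shrinking $\eta$ if necessary, I may pick a radius $\eta_0 \in (0, \eta]$ such that $\lambda(\cdot, \mathbf{0}) - 1$ has $s = 2$ as its unique zero in the closed disk $|s - 2| \leq \eta_0$ and satisfies a uniform lower bound $|\lambda(s, \mathbf{0}) - 1| \geq c_0$ on the circle $|s - 2| = \eta_0$, for some absolute $c_0 > 0$.

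Next I would invoke continuity of the leading eigenvalue in $\vt$. The operator bound
\[
  \|\Pi_{s, \vt} - \Pi_{s, \mathbf{0}}\|_0 \ll \|\vt\| + \|\vt\|^{\alpha_0 - \ee}
\]
from Lemma~\ref{lem:perturbationbound-norm}, combined with the spectral gap for $\Pi_{s, \mathbf{0}}$ provided by Lemma~\ref{lem:small-height} and the standard analytic perturbation theory for isolated simple eigenvalues (see \cite[Theorem~2.6]{Kloeckner2017}), transfers to the eigenvalue estimate $|\lambda(s, \vt) - \lambda(s, \mathbf{0})| \ll \|\vt\| + \|\vt\|^{\alpha_0 - \ee}$, uniformly for $s$ on the circle $|s - 2| = \eta_0$. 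Choosing $t_0$ small enough thus ensures that $|\lambda(s, \vt) - \lambda(s, \mathbf{0})| < c_0$ on this circle whenever $\|\vt\| \leq t_0$. Rouché's theorem, applied to the analytic (in $s$) functions $\lambda(\cdot, \vt) - 1$ and $\lambda(\cdot, \mathbf{0}) - 1$ on the closed disk $|s - 2| \leq \eta_0$, then yields a unique zero $s_0(\vt)$ of $\lambda(\cdot, \vt) - 1$ in this disk, delivering both existence and uniqueness simultaneously.

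I do not anticipate any serious obstacle. The mildly delicate point is to confirm that the norm-level perturbation bound of Lemma~\ref{lem:perturbationbound-norm} indeed promotes to a bound on the dominant eigenvalue, but this is a routine consequence of the stability of an isolated simple eigenvalue under a small operator perturbation, once Lemma~\ref{lem:small-height} is in hand.
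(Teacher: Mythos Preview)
Your Rouché argument is correct and complete. The paper takes a different route: it simply invokes a generalized implicit function theorem (Kumagai's version, which requires differentiability only in the $s$-variable), with the hypothesis $\partial_{10}\lambda(2,\mathbf{0})=-\fd\neq 0$ supplied by Lemma~\ref{lem:approx-partialder}.

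The two approaches are equally valid but emphasize different structural features. The paper's argument is shorter and treats the problem as a black-box application of an off-the-shelf theorem; it does not use the analyticity of $\lambda$ in $s$, only its $\CC^1$ regularity together with continuity in $\vt$. Your approach is more self-contained and more explicitly complex-analytic: it exploits the holomorphy in $s$ granted by Lemma~\ref{lem:small-height}, and in return Rouché delivers existence and uniqueness in one stroke, without any appeal to a nonstandard implicit function theorem. A small bonus of your method is that it makes transparent why one does not need any regularity of $\lambda$ in $\vt$ beyond continuity (uniform in $s$ on the circle), which is exactly what the Keller--Liverani--Klöckner perturbation bound provides. The only cosmetic point: the phrase ``shrinking $\eta$ if necessary'' is slightly off, since $\eta$ is given; you mean simply ``pick $\eta_0\in(0,\eta]$ small enough that $\lambda(\cdot,\mathbf{0})$ is defined on the closed disk and $s=2$ is its unique zero there''.
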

\begin{proof}
  This follows from a general form of the implicit functions theorem, \textit{e.g.}~\cite[Theorem~1.1]{Kumagai1980}, whose hypotheses are satisfied by virtue of Lemma~\ref{lem:approx-partialder}.
\end{proof}

In what follows we will not discuss the regularity of~$s_0(\vt)$ at each~$\vt$: we are only interested about its asymptotic behaviour around~$\vt=0$. We will use results on effective perturbation theory of linear operators, which have been worked out recently in~\cite{Kloeckner2019}.

\subsection{The sub-CLT case}

We first focus on the case where we do not aim at extracting a term of order~$2$ in the asymptotic expansion.

\begin{lemma}\label{lem:estim-s0-subclt}
  For~$\nvt \leq t_0$, we have
  $$ s_0(\vt) - 2 = \frac{1}{\fd}\int_0^1 (\e^{i\langle{\vt,\vphi(x)}\rangle}-1)\xi(x)\df x + O_\eps(\nvt^2 + \nvt^{2\alpha_0-\eps}). $$
\end{lemma}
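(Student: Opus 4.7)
The plan is to exploit the eigenvalue equation $\Pi_{s,\vt}[f_{s,\vt}] = \lambda(s,\vt) f_{s,\vt}$ at $s = s_0(\vt)$, where $f_{s,\vt} := \P_{s,\vt}[\xi]$. The starting point is that $\H$ preserves Lebesgue measure by duality (a change of variable on each inverse branch gives $\int_0^1 \H[f]\df\nu = \int_0^1 f\df\nu$), hence by~\eqref{eq:def-Pi-st} one has $\int_0^1 \Pi_{s,\vt}[f]\df\nu = \int_0^1 g_{s,\vt}f\df\nu$. Applying this to the eigenvalue equation and specializing to $s = s_0(\vt)$ (so that $\lambda = 1$) yields the master identity
$$ \int_0^1 \bigl(g_{s_0(\vt),\vt}-1\bigr) f_{s_0(\vt),\vt}\df\nu = 0. $$

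Setting $\epsilon := s_0(\vt) - 2$ and expanding $g_{s_0(\vt),\vt} = \e^{i\langle\vt,\vphi\rangle}\pdt^{\epsilon}$, one writes
$$ g_{s_0(\vt),\vt} - 1 = (\e^{i\langle\vt,\vphi\rangle}-1) + \epsilon\,\e^{i\langle\vt,\vphi\rangle}\log\pdt + R(x,\vt,\epsilon), $$
where $R = O(\epsilon^2 (\log\pdt)^2)$ on $\{|\epsilon\log\pdt|\leq 1\}$ and is handled by the trivial bound $|\pdt^\epsilon|+1+|\epsilon\log\pdt|$ on the complement, of small measure. In each integral I would then replace $f_{s_0(\vt),\vt}$ by $\xi$ using the operator-norm bound
$$ \norm{f_{s_0(\vt),\vt}-\xi}_\infty \ll |\epsilon|+\nvt+\nvt^{\alpha_0-\ee} $$
from the proof of Lemma~\ref{lem:approx-partialder}. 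The key identification
$$ \int_0^1 \log\pdt\cdot\xi\df\nu = -\fd $$
follows from the $T$-invariance of the Gauss probability measure $\xi\df\nu$, giving $\int_0^1 \log T^j(x)\xi(x)\df x = \int_0^1\log x\cdot\xi(x)\df x$ for each $0\leq j<m$, combined with the definition~\eqref{eq:def-fd}.

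Assembling the pieces, the master identity becomes
$$ 0 = \II_\vphi(\vt) - \epsilon\,\fd + O\!\bigl(\epsilon^2 + |\epsilon|(\nvt+\nvt^{\alpha_0-\ee}) + (\nvt+\nvt^{\alpha_0-\ee})^2\bigr), $$
where in the first error I use the standard estimate $|\II_\vphi(\vt)| + \int_0^1 |\e^{i\langle\vt,\vphi\rangle}-1|\df\nu \ll \nvt + \nvt^{\alpha_0-\ee}$, obtained from $|\e^{iy}-1|\leq \min(|y|,2) \ll |y|^\alpha$ with $\alpha = \min(1,\alpha_0-\ee)$ together with hypothesis~\eqref{eq:bound-gamma-rough}. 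The a priori bound $|\epsilon| \ll \nvt+\nvt^{\alpha_0-\ee}$ comes from~\eqref{eq:taylor1-lambda} of Lemma~\ref{lem:approx-partialder}, combined with the analogous integration trick at $s=2$ (which gives $\lambda(2,\vt)-1 = O(\nvt+\nvt^{\alpha_0-\ee})$). Feeding this back, the total error fits into $O_\ee(\nvt^2 + \nvt^{2\alpha_0-\ee})$ after starting with $\ee/2$ in place of $\ee$, and dividing by $\fd$ yields the claimed asymptotic. The main technical point is the Taylor remainder for $\pdt^\epsilon$: since $\log\pdt$ is unbounded as $x\to 0^+$ (the bound $\pdt\leq x$ is all one has), one must split the domain of integration as above and exploit the fact that $\int_0^1|\log\pdt|^p\xi\df\nu<\infty$ for every $p\geq 1$; everything else is a bookkeeping exercise.
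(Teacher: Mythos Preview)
Your argument is correct and complete. The one point requiring care is the remainder $R$ when $\Re\epsilon<0$, since then $|\pdt^\epsilon|=\pdt^{\Re\epsilon}$ is unbounded near the singular set; your splitting works once you combine $\int\pdt^{-1/2}\xi\df\nu<\infty$ (which follows from $\pdt\circ h=|h'|^{1/2}$ and convergence of $\sum_{h\in\cH^m}|h'(0)|^{3/4}$) with the tail bound $\nu(|\log\pdt|>1/|\epsilon|)\ll_p|\epsilon|^p$ via Cauchy--Schwarz. Alternatively, the double-integral representation $\pdt^\epsilon-1-\epsilon\log\pdt=\epsilon^2\int_0^1\int_0^1 u\,\pdt^{uv\epsilon}(\log\pdt)^2\df u\df v$ gives $|R|\leq|\epsilon|^2\pdt^{-|\epsilon|}(\log\pdt)^2$ in one stroke.

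Your route differs from the paper's. The paper invokes Kloeckner's second-order perturbation formula directly for the eigenvalue,
$$ \lambda(s,\vt) = \int_0^1 \Pi_{s,\vt}\xi\df\nu + O(\|\Pi_{s,\vt}-\Pi_{2,0}\|^2), $$
then Taylor-expands $\int\Pi_{s,\vt}\xi\df\nu$ in $s$ around $2$ and uses the same bootstrap. You instead integrate the \emph{exact} eigenvalue equation against Lebesgue measure (exploiting $\int\H^m[f]\df\nu=\int f\df\nu$) to obtain $\int(g_{s_0,\vt}-1)f_{s_0,\vt}\df\nu=0$, and then approximate the perturbed eigenfunction $f_{s_0,\vt}$ by $\xi$ using the first-order bound~\eqref{eq:bound-fst-xi}. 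Both approaches rest ultimately on the same perturbation inputs (Lemma~\ref{lem:perturbationbound-norm} and~\cite{Kloeckner2017}); yours trades the black-box eigenvalue formula for a more explicit eigenfunction estimate. The paper's version is slightly more streamlined, while yours is more self-contained and makes the role of the Gauss-measure identity $\int\log\pdt\cdot\xi\df\nu=-\fd$ transparent.
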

\begin{proof}
  By Theorem~2.6 of~\cite{Kloeckner2019}, we have
  \begin{align*}
    \lambda(s, \vt) = {}& \int_0^1 \Pi_{s,\vt}\xi(x) \df x + O(\norm{\Pi_{s,\vt}-\Pi_{2,0}}^2) \\
    = {}& (s-2)\int_0^1 \big[\tfrac{\partial}{\partial s}\Pi_{s,0}\xi\big]_{s=2}(x)\df x + \int_0^1 \Pi_{2,\vt}\xi(x) \df x + O_\eps(\abs{s-2}^2 + \nvt^2 + \nvt^{2\alpha_0-\eps}) \\
    = {}& (s-2)\int_0^1 \big[\tfrac{\partial}{\partial s}\H^m[g_{s,0}\xi]\big]_{s=2}(x)\df x + \int_0^1 \H^m[\e^{i\langle \vt, \vphi \rangle }\xi](x) \df x + O_\eps(\abs{s-2}^2 + \nvt^2 + \nvt^{2\alpha_0-\eps}) \numberthis\label{eq:lambdast-prebound}
  \end{align*}
  by~\eqref{eq:def-pdt}. Since~$\int \H[f]\df\nu = \int f\df\nu$ and~$\H[\xi] = \xi$, the first integral is~$m\int_0^1 \log(x)\xi(x)\df x = -\fd$. The second equals~$\int_0^1 \e^{i\langle \vt, \vphi(x) \rangle}\xi(x)\df x$. For~$\alpha = \min(\alpha_0, 1)$, we have
  \begin{equation}
    \abs{\int_0^1 (\e^{i\langle \vt, \vphi \rangle}-1)\xi\df\nu} \leq \nvt^\alpha \int_0^1 \norm{\vphi}^\alpha \xi\df\nu = \nvt^\alpha \int_0^1\H[\norm{\vphi}^\alpha \xi] \df \nu \ll \nvt^\alpha\label{eq:lambdast-trivintegral}
  \end{equation}
  by our hypothesis~\eqref{eq:bound-norm-rough} and the triangle inequality (\emph{cf.}~\eqref{eq:triangleineq-hypoth-phi}). Setting~$s = s_0(\vt)$, we deduce~$s_0(\vt)-2 = O(\nvt + \nvt^{\alpha_0})$ by combining~\eqref{eq:lambdast-prebound} and \eqref{eq:lambdast-trivintegral}. Then another use of~\eqref{eq:lambdast-prebound} yields our claimed estimate.
\end{proof}

\subsection{The CLT case}

Our next goal is to extract the term of order~$\|\vt\|^2$ in the analysis above. We assume throughout that~$\alpha_0 > 1$.
In order to describe the order~2 coefficients, we introduce the following notation. Recalling~\eqref{eq:def-pdt}, let
\begin{align*}
  \mu_\vphi := {}& \frac{1}{\fd}\int_0^1 \vphi(x) \xi(x) \df x, \\
  \K[f] := {}& \frac1\xi (\Id - \H^m)^{-1} \N^m[f\xi], \numberthis\label{eq:def-opK} \\
  \vpsi := {}& \vphi + \mu_\vphi \log \pdt, \\
  \vchi := {}& \K \vpsi. \numberthis\label{eq:def-vchi}
\end{align*}
The well-definedness of~$\mu_\vphi$ follows from our hypothesis~$\alpha_0>1$. Note that~$\vchi$ is bounded on~$[0, 1]$, because
\begin{align*}
  \|\N^m[\vpsi \xi]\|_\infty \ll {}& \int_{[0, 1]} \|\vpsi\| \df \nu + \|\H^m[\vpsi\xi]\|_\infty \\
  \ll {}& 1 + \int_{[0, 1]} \|\vphi\|\df\nu + \sum_{h\in\cH^m} |h'(0)| \|\vphi|_{h(\I)}\|_{\infty},
\end{align*}
which is finite by~\eqref{eq:bound-norm-rough}.

\begin{lemma}\label{lem:estim-s0}
  If~$\alpha_0>1$, then
  \begin{equation}
    s_0(\vt) - 2 = \frac{1}{\fd}\int_0^1(\e^{i\langle \vt, \vphi(x) \rangle}-1)\xi(x)\df x - \vt^T C_\vphi \vt + O_\eps(\nvt^3 + \nvt^{\alpha_0+1-\eps}),\label{eq:approxs0-alpha-1-2}
  \end{equation}
  with
  \begin{equation}
    C_\vphi = \frac{1}{\fd}\int_0^1\Big(\frac12(\vpsi-\vphi)\cdot(\vpsi-\vphi)^T + \vphi\cdot(\vpsi-\vphi)^T + \vpsi\cdot\vchi^T \Big)\xi\df\nu. \label{eq:def-Cgam}
  \end{equation}
  Moreover, if~$\alpha_0> 2$, then
  \begin{equation}
    s_0(\vt) - 2 =  i \langle\vt, \mu_\vphi\rangle - \tfrac12 \vt^T \Sigma_\vphi \vt + O_\eps(\nvt^3 + \nvt^{\alpha_0-\eps}),\label{eq:approxs0-alpha-2}
  \end{equation}
  with
  \begin{equation}
    \Sigma_\vphi := \frac{1}{\fd}\int_0^1 (\vpsi + \vchi - \vchi\circ T^m\big)\cdot(\vpsi + \vchi - \vchi\circ T^m\big)^T \xi\df\nu.\label{eq:def-sigmagam}
  \end{equation}
\end{lemma}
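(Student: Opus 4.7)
The plan is to carry the perturbation argument used in Lemma~\ref{lem:estim-s0-subclt} one order further. Starting from the family $\Pi_{s,\vt} = \Pi_{2,0} + E_{s,\vt}$ and applying an effective second-order perturbation formula for the dominant eigenvalue of a simple rank-one spectral projection (\cite[Theorem~2.6]{Kloeckner2017} taken to one higher order), one obtains
\begin{equation*}
\lambda(s,\vt) - 1 = \int (g_{s,\vt}-1)\,\xi\df\nu + \int (g_{s,\vt}-1)\,\K[g_{s,\vt}-1]\,\xi\df\nu + O\bigl(\|E_{s,\vt}\|_0^3\bigr),
\end{equation*}
where the first-order term has been simplified using $\int\H^m[f]\df\nu = \int f\df\nu$, and the bilinear term encodes the reduced resolvent of $\Pi_{2,0}$ at $\lambda=1$ through the operator $\K$ of~\eqref{eq:def-opK} (via the spectral decomposition~\eqref{eq:pi20-spectral-decomp} and the identity $\N_{2,0} = \H^m - \P_{2,0}$). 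By~\eqref{eq:bound-normdiff-Pi-t}, under $\alpha_0>1$ we have $\|E_{s,\vt}\|_0 = O(|s-2| + \nvt)$, so the remainder fits the target error.

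Next I would Taylor expand $g_{s,\vt} - 1 = (\e^{i\langle\vt,\vphi\rangle}-1) + (s-2)\log\pdt + (s-2)(\e^{i\langle\vt,\vphi\rangle}-1)\log\pdt + O(|s-2|^2\log^2\pdt)$, set $s = s_0(\vt)$, and solve for $s_0(\vt)-2$ by isolating the coefficient $-\fd$ coming from $\int\log\pdt\cdot\xi\df\nu$. Lemma~\ref{lem:estim-s0-subclt} gives $s_0(\vt)-2 = O(\nvt)$, so all cross and bilinear terms are of order~$\nvt^2$; iterating the substitution $(s_0-2)\mapsto \fd^{-1}\int(\e^{i\langle\vt,\vphi\rangle}-1)\xi\df\nu$ into these terms produces~\eqref{eq:approxs0-alpha-1-2}. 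The three summands of~\eqref{eq:def-Cgam} then arise, respectively, from (a) iterating $(s_0-2)$ inside the cross Taylor term and rewriting $(s_0-2)\log\pdt$ via $\vpsi - \vphi = \mu_\vphi\log\pdt$; (b) the pure $(s_0-2)^2 \log^2\pdt$ contribution; (c) the bilinear resolvent term expressed through $\K$.

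For the sharper regime $\alpha_0>2$, the first moment $\mu_\vphi$ and the matrix $\int\vphi\vphi^T\xi\df\nu$ are both finite. Further Taylor expanding
\begin{equation*}
\int(\e^{i\langle\vt,\vphi\rangle}-1)\xi\df\nu = i\fd\langle\vt,\mu_\vphi\rangle - \tfrac12\int\langle\vt,\vphi\rangle^2\xi\df\nu + O(\nvt^3),
\end{equation*}
and combining with $-\vt^T C_\vphi\vt$, the quadratic terms must reorganize into $-\tfrac12\vt^T \Sigma_\vphi\vt$. The required identification is the classical dynamical Green-Kubo computation: expand the outer product $(\vpsi + \vchi - \vchi\circ T^m)(\vpsi + \vchi - \vchi\circ T^m)^T$, integrate against $\xi\df\nu$, and exploit the cohomological relation $(\Id - \H^m)[\vchi\xi] = \N_{2,0}[\vpsi\xi]$ satisfied by $\vchi = \K[\vpsi]$ together with the $T$-invariance of $\xi\df\nu$; this rewrites the perturbative variance as the $L^2(\xi\df\nu)$ norm of the coboundary-corrected observable.

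The main obstacle is the careful bookkeeping required to identify $C_\vphi$ and $\Sigma_\vphi$ in the precise forms~\eqref{eq:def-Cgam},~\eqref{eq:def-sigmagam}: the extraction simultaneously involves Taylor expansions in $s$ and in $\vt$ together with an implicit-function substitution, and all third-order corrections must be absorbed in the claimed error $O(\nvt^3 + \nvt^{\alpha_0+1-\ee})$. A further technical point is that when $1 < \alpha_0 \leq 2$ the second moment $\int\vphi\vphi^T\xi\df\nu$ is typically infinite, so the proof of~\eqref{eq:approxs0-alpha-1-2} must keep the full integral $\int(\e^{i\langle\vt,\vphi\rangle}-1)\xi\df\nu$ intact throughout and only invoke integrals of $\vphi$ against $\log\pdt\cdot\xi\df\nu$ or integrals of the smoother function $\K[\vpsi]$, which are available under the working hypothesis $\alpha_0>1$.
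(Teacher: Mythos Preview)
Your approach is essentially the same as the paper's: second-order perturbation of the dominant eigenvalue via \cite[Theorem~2.6]{Kloeckner2017}, Taylor expansion of $g_{s,\vt}$ in $(s-2,\vt)$, substitution of the first-order approximation $s_0(\vt)-2 \approx i\langle\vt,\mu_\vphi\rangle$ into the already-quadratic terms, and the Green--Kubo rewriting of the variance for~\eqref{eq:def-sigmagam}. Two small presentational glitches to fix: your labels (a) and (b) are interchanged relative to the ordering of the three summands in~\eqref{eq:def-Cgam} (the cross term $(s_0-2)(\e^{i\langle\vt,\vphi\rangle}-1)\log\pdt$ produces $\vphi\cdot(\vpsi-\vphi)^T$, while the pure $(s_0-2)^2$ term produces $\tfrac12(\vpsi-\vphi)(\vpsi-\vphi)^T$); and your displayed Taylor expansion puts the $(s-2)^2\log^2\pdt$ contribution inside an $O(\cdot)$ but you then need it explicitly for (b), so write the term $\tfrac12(s-2)^2\log^2\pdt$ out and push the error to third order.
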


\begin{remark}
  With the definition~\eqref{eq:def-sigmagam}, it is clear that the matrix~$\Sigma_\vphi$ is symmetric, positive semi-definite. It is definite if and only if the vectors~$\{(\vpsi + \vchi - \vchi\circ T^m)(x), x\in(0,1)\}$ span the whole space~$\R^d$. By our hypothesis~\eqref{eq:bound-norm-rough}, we have~$\int_0^1 \|\vphi\| \abs{\log\pdt} \df\nu < \infty$ whenever~$\alpha_0>1$, so that the matrix~$C_\vphi$ is well-defined in this case. Similarly, the matrix~$\Sigma_\vphi$ is well-defined whenever~$\alpha_0>2$, since in that case~$\int_{[0, 1]} \|\vpsi \cdot \vpsi^T\|\df\nu \ll 1 + \int_{[0, 1]} \|\vphi\|^2 \df\nu < \infty$.
\end{remark}

\begin{proof}
  We extend the computations of Lemma~\ref{lem:estim-s0-subclt}, using our hypothesis on~$\alpha_0$ to expand the quantity~$g_{s,\vt} = \e^{i\langle \vt, \vphi\rangle} \pdt^{s-2}$ to order~$2$ at~$s=2$ and order~$1$ at~$\vt=0$. Let
  $$ \Delta_{s,\vt} = \Pi_{s,\vt} - \Pi_{2,0}. $$
  We write~$\e^{i\langle \vt, \vphi \rangle} = 1 + i\langle \vt, \vphi \rangle + O((\norm{\vphi}\nvt)^{\min(2,\alpha_0-\eps)})$. Letting~$s = s_0(\vt) = 2 + O(\nvt)$, we obtain
  \begin{align*}
    \int_0^1 \Delta_{s,\vt}\xi\df\nu = {}& \int_0^1 (\e^{i\langle \vt, \vphi \rangle}-1)\xi\df\nu - \fd(s-2) + i(s-2) \int_0^1 \langle \vt, \vphi \rangle (\log \pdt)\xi\df\nu \\
    {}& \qquad + \frac12(s-2)^2\int_0^1(\log\pdt)^2\xi\df\nu +  O(\nvt^3 + \nvt^{1+\alpha_0-\eps}) \\
    = {}&  \int_0^1 (\e^{i\langle \vt, \vphi \rangle}-1)\xi\df\nu - \fd(s-2) - \vt^T C_{1,\vphi} \vt + O(\nvt^3 + \nvt^{1+\alpha_0-\eps}),
  \end{align*}
  where~$C_{1,\vphi} := \int_0^1 \vphi \cdot (\vpsi-\vphi)^T \xi\df\nu + \frac12\int_0^1 (\vpsi-\vphi)\cdot(\vpsi-\vphi)^T \xi\df\nu$.
  We use again Theorem~2.6 of~\cite{Kloeckner2019}, getting
  $$ \lambda(s, \vt) = 1 + \int_0^1 \Delta_{s,\vt}\xi \df\nu + \int_{[0,1]} \Delta_{s,\vt} (\Id-\H^m)^{-1}(\Id - \P) \Delta_{s,\vt}\xi\df\nu + O(\norm{\Delta_{s,\vt}}^3). $$
  By computations similar to~\eqref{eq:bound-normdiff-Pi-t}, we have
  $$ \norm{\Delta_{s,\vt}[f] - i\H^m[\langle \vt, \vphi \rangle f] - i\fd^{-1}\Big(\int_0^1\langle \vt, \vphi \rangle \xi\df\nu\Big)\H^m[\log\pdt f]}_0 \ll_\eps (\nvt^{\alpha_0-\eps} + \nvt^{2})\norm{f}_0. $$
  Note that the left-hand side can be written~$\norm{\Delta_{s,t}[f] - i\langle{\vt, \H^m[\vpsi f]}\rangle}_0$. We deduce
  \begin{align*}
    \int_{[0,1]} \Delta_{s,\vt} & (\Id-\H^m)^{-1}(\Id - \P) \Delta_{s,\vt}\xi\df\nu  \\
    = {}& -\int_0^1 \langle \vt, \vpsi \rangle (\Id-\H^m)^{-1}(\Id-\P)\H^m[\langle \vt, \vpsi \rangle \xi]\df\nu + O_\eps(\nvt^{3} + \nvt^{1+\alpha_0-\eps}).    
  \end{align*}
  Since~$\P\H^m = \P$ and~$\H^m = \P + \N^m$, we have~$(\Id-\H^m)^{-1}(\Id-\P)\H^m[f\xi] = \xi\K[f]$ where we recall the definition~\eqref{eq:def-opK}. Therefore, we have
  $$ -\int_0^1 \langle \vt, \vpsi \rangle (\Id-\H^m)^{-1}(\Id-\P)\H^m[\langle \vt, \vpsi \rangle \xi]\df\nu = -\vt^T C_{2,\vphi} \vt, $$
  where~${\bf u}^T$ denotes the transpose of the column vector~${\bf u}$, and~$C_{2,\vphi} = \int_{[0,1]} \big( \vpsi \cdot \K[\vpsi]^T \big)\xi\df\nu$. This proves~\eqref{eq:approxs0-alpha-1-2} with~$C_\vphi = C_{1,\vphi} + C_{2,\vphi}$ as claimed.

  To prove~\eqref{eq:approxs0-alpha-2}, we note that by the hypothesis~$\alpha_0>2$, the quantity~\eqref{eq:def-sigmagam} is well-defined. In order to expand it, we first note that, with the definition~\eqref{eq:def-vchi}, that by construction
  $$ \P[\vpsi \xi] = \P[\vphi \xi + \mu_{\vphi} \xi \log \pdt] = \Big(\int \vphi \xi + \fd^{-1}\Big(\int\vphi \xi\df\nu\Big) \Big(\int(\log\pdt)\xi\df\nu\Big)\Big)\xi \df\nu= 0 $$
  since~$\int(\log\pdt) \xi = -\fd$. Therefore
  $$ \vchi\xi = (\Id-\H^m)^{-1}\N^m[\vpsi\xi] = \sum_{j\geq 1}\H^{jm}[\vpsi\xi] = \H^m[(\vpsi+\vchi)\xi]. $$
  By the property~$\int f (g\circ T^m)\df\nu = \int \H^m[f] g\df\nu$, we have
  $$ \int (\vpsi + \vchi)\cdot(\vchi \circ T^m)^T \xi\df\nu = \int \H^m[(\vpsi+\vchi)\xi] \cdot \vchi^T \df\nu= \int \vchi \cdot \vchi^T \xi\df\nu. $$
  Similarly, we have
  $$ \int (\vchi \circ T^m) \cdot (\vchi \circ T^m)^T \xi\df\nu = \int \H^m[(\vchi\circ T^m)\xi] \cdot \vchi^T\df\nu = \int \vchi \cdot \vchi^T \xi\df\nu. $$
  We deduce that, with the definition~\eqref{eq:def-sigmagam}, we have
  \begin{align*}
    \Sigma_\vphi = {}& \int (\vpsi + \vchi)\cdot (\vpsi + \vchi)^T \xi\df\nu - 2 \int (\vpsi + \vchi)(\vchi \circ T^m)\df\nu + \int (\vchi \circ T^m) \cdot (\vchi \circ T^m)^T \xi\df\nu \\
    = {}& \int \vpsi \cdot \vpsi^T \xi\df\nu + 2 \int \vpsi \cdot \vchi^T\xi\df\nu.
  \end{align*}
  On the other hand, expanding the squares in~\eqref{eq:def-Cgam}, we find
  $$ 2C_\vphi = \int \vpsi\cdot\vpsi^T\xi\df\nu - \int\vphi\cdot\vphi^T\xi\df\nu + 2\int \vpsi\cdot\vchi^T\xi\df\nu. $$
  The claimed formula~\eqref{eq:approxs0-alpha-2} follows by the Taylor expansion~$\e^{iu} = 1 + u + \frac12u^2 + O(u^{\alpha_0-\eps})$ with~$u = i \langle \vt, \vphi \rangle$.
\end{proof}

\section{Proof of Theorem~\ref{th:main-general}}

Recall that~$\Omega_Q$ consists of the rationals in~$(0,1]$ of denominators at most~$Q$, and let
\begin{equation}
  \chi_Q(\vt) := \sum_{x\in\Omega_Q} \exp(\langle{i\vt, S_\vphi(x)}\rangle).\label{eq:def-chiQ}
\end{equation}

\begin{proposition}\label{prop:asymp-E-s0}
  For all~$\eps>0$, there exists~$\delta, t_0>0$ such that for~$\nvt\leq t_0$, we have
  $$ \E_Q(\e^{i\langle{\vt,S_\vphi(x)}\rangle}) = Q^{s_0(\vt)-2} \big\{1 + O_\eps(Q^{-\delta} + \nvt + \nvt^{\alpha_0-\eps})\big\}. $$
\end{proposition}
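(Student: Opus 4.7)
The approach is classical Mellin (Perron) inversion applied to the generating Dirichlet series
$$ \gS(\vt, s) = \sum_{q \geq 1} a_q(\vt) q^{-s}, \qquad a_q(\vt) := \sum_{\substack{1\leq a \leq q \\ (a,q) = 1}} \e^{i\langle \vt, S_\vphi(a/q)\rangle}, $$
so as to recover~$\chi_Q(\vt) = \sum_{q\leq Q} a_q(\vt)$ defined in~\eqref{eq:def-chiQ}, for which~$\chi_Q(\vt) = \abs{\Omega_Q} \cdot \E_Q(\e^{i\langle\vt, S_\vphi(x)\rangle})$. The trivial bound~$\abs{a_q(\vt)} \leq \varphi(q)$ guarantees absolute convergence for~$\Re s > 2$, and a truncated Perron formula yields
$$ \chi_Q(\vt) = \frac{1}{2\pi i}\int_{c-iT}^{c+iT} \gS(\vt, s)\frac{Q^s}{s}\df s + O\Big(\frac{Q^2 \log Q}{T}\Big), \qquad c := 2 + \frac{1}{\log Q}, $$
where~$T = Q^A$ is a parameter to be fixed.

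The plan is then to shift the contour to the line~$\Re s = 2-\delta$, crossing the unique pole of~$\gS(\cdot,\vt)$ in the resulting rectangle, located at~$s = s_0(\vt)$ by Proposition~\ref{prop:mero-continuation} (valid for~$\nvt$ small enough that~$s_0(\vt)$ stays in the disc around~$2$ where the perturbation analysis applies). On the shifted vertical line and on the two horizontal segments, the growth bound~$\abs{\gS(s,\vt)} \ll \abs{\tau}^{O(\delta)}\log(\abs{\tau}+2)$ from the same proposition, combined with~$1/\abs{s}$, produces a contribution of size~$\ll Q^{2-\delta} T^{O(\delta)}\log T$ on the vertical line and~$\ll Q^2 T^{O(\delta)-1}\log T$ on the horizontal sides. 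Balancing these against the Perron truncation error by choosing~$A$ large and~$\delta$ small enough (so that~$A\cdot O(\delta) < 1/2$, say) gives a total error of size~$Q^{2-\delta'}$ for some~$\delta' > 0$.

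From the explicit splitting~\eqref{eq:extractpole-S} and the first-order expansion $1 - \lambda(s, \vt) = -\partial_{10}\lambda(s_0(\vt), \vt)(s - s_0(\vt)) + O((s - s_0(\vt))^2)$, the residue at~$s = s_0(\vt)$ equals
$$ -\frac{Q^{s_0(\vt)}}{s_0(\vt)} \cdot \frac{\sum_{j=0}^{m-1}\Pi^{(j)}_{s_0(\vt),\vt}\P_{s_0(\vt),\vt}[\1](1)}{\partial_{10}\lambda(s_0(\vt),\vt)}. $$
At~$\vt = 0$ this evaluates, using~$\partial_{10}\lambda(2, 0) = -\fd$, $\P_{2,0}[\1] = \xi$, and $\H[\xi] = \xi$ (so the numerator is~$m\xi(1) = m/(2\log 2)$), together with~$\fd = m\pi^2/(12\log 2)$, to~$3Q^2/\pi^2$, which matches~$\abs{\Omega_Q} = (3/\pi^2) Q^2 + O(Q\log Q)$. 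For general~$\vt$ with~$\nvt \leq t_0$, the perturbation bounds~\eqref{eq:bound-normdiff-Pi-t},~\eqref{eq:bound-fst-xi} and~\eqref{eq:approx-partial10lambda} applied to each factor separately (the operators~$\Pi^{(j)}_{s_0(\vt),\vt}$, the projector~$\P_{s_0(\vt),\vt}$, and the derivative~$\partial_{10}\lambda$) yield the multiplicative correction~$1 + O_\ee(\nvt + \nvt^{\alpha_0-\ee})$. Dividing by~$\abs{\Omega_Q}$ then produces the claimed asymptotic.

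The main obstacle will be the interplay between the Perron truncation error and the polynomial growth~$\abs{\tau}^{O(\delta)}$ on the shifted line: one must track the implied constant in the exponent of~$\abs{\tau}$ carefully, so that the trade-off yields a genuine power saving~$Q^{-\delta'}$. Beyond this, the residue computation itself is essentially mechanical given the effective perturbation estimates from Section~\ref{sec:asympt-behav-lead}.
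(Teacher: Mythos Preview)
Your proposal is correct and follows essentially the same route as the paper: Mellin inversion for~$\gS(\vt,s)$, a contour shift to~$\Re s = 2-\delta$ using Proposition~\ref{prop:mero-continuation}, and evaluation of the residue at~$s_0(\vt)$ via the perturbation estimates of Section~\ref{sec:asympt-behav-lead}. The only difference is the choice of inversion device: the paper introduces a smooth weight~$w$ (so that~$\hat w(s)$ decays as fast as needed, at the cost of a smoothing error~$O(\Omega^{-1}Q^2)$ and an~$\Omega^{C+2}$ factor on the shifted line, optimized in~$\Omega$), whereas you use the truncated Perron formula with a sharp cutoff and optimize in~$T$. Both are standard and lead to the same conclusion; your identification of the trade-off between the truncation error and the polynomial growth~$\abs{\tau}^{C\delta}$ is exactly the point, and the residue computation matches (your factor~$1/s_0$ plays the role of the paper's~$\hat w(s_0)$).
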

\begin{proof}
  Recall that~$\chi_Q(\vt)$ was defined in~\eqref{eq:def-chiQ}, so that~$\E_Q(\e^{i\langle{\vt,S_\vphi(x)}\rangle}) = \chi_Q(\vt) / \chi_Q(0)$. Let~$\Omega\geq1$ be a parameter, and~$w:\R_+\to[0,1]$ be a smooth function satisfying
  \begin{equation}
    \1_{[0,1]} \leq w \leq \1_{[0,1+\Omega^{-1}]}, \qquad \|w^{(j)}\|_\infty \ll_j \Omega^j. \label{eq:smoothing-bounds}
  \end{equation}
  Then by a trivial bound on the contribution of~$q\in[Q, Q(1+\Omega^{-1})]$, we have
  \begin{equation}
    \chi_Q(\vt) = O(\Omega^{-1}Q^2) + {\tilde \chi}_Q(\vt), \qquad \text{where} \quad {\tilde \chi}_Q(\vt) := \ssum{1 \leq a \leq q \\ (a,q)=1} \e^{i\langle{\vt,S_\vphi(a/q)}\rangle}w\Big(\frac qQ\Big).\label{eq:chiQ-smoothing}
  \end{equation}
  By Perron's formula, we have
  $$ {\tilde \chi}_Q(\vt) = \frac1{2\pi i}\int_{3-i\infty}^{3+i\infty} Q^s \gS(s,\vt) {\hat w}(s)\df s, $$
  where the Mellin transform~${\hat w}(s) = \int_0^\infty w(u) u^{s-1}\df u$ is defined for~$\Re(s)>0$.
  We move the contour to the line~$\Re(s) = 2-\delta$. If~$t_0, \delta>0$ are small enough, then by Proposition~\ref{prop:mero-continuation}, we encounter exactly one pole, at~$s = s_0(\vt)$. By Cauchy's theorem, we deduce
  \begin{equation}
    {\tilde \chi}_Q(\vt) = \underset{s=s_0(\vt)}\Res(Q^s \gS(s, \vt) {\hat w}(s)) + \frac1{2\pi i}\int_{2-\delta-i\infty}^{2-\delta+i\infty} Q^s \gS(s,\vt) {\hat w}(s)\df s.\label{eq:chiQ-contourshift}
  \end{equation}
  For some absolute constant~$C\geq 1$, Proposition~\ref{prop:mero-continuation} yields the bound
  $$ \abs{\gS(s,\vt)} \ll (\abs{\tau}+1)^{C}. $$
  On the other hand, we have~$\abs{{\hat w}(s)} \ll_C \Omega^{C+2} \abs{s}^{-C-2}$ for~$\Re(s)\in[1/2,3]$ by integration by parts and~\eqref{eq:smoothing-bounds}, so that
  \begin{equation}\label{eq:bound-restcontour}
    \abs{\int_{2-\delta-i\infty}^{2-\delta+i\infty} Q^s \gS(s,\vt) {\hat w}(s)\df s} \ll \Omega^{C+2} Q^{2-\delta}.
  \end{equation}
  Finally, we have by~\eqref{eq:extractpole-S}
  \begin{equation}
    \underset{s=s_0(\vt)}\Res( Q^s \gS(s,\vt) {\hat w}(s)) = -\frac{Q^{s_0} {\hat w}(s_0)}{\partial_{10}\lambda(s_0,\vt)} \sum_{0\leq j < m} \Pi_{s_0,\vt}^{(j)}\P_{s_0,\vt}[\1](1), \label{eq:chiQ-residue}
  \end{equation}
  where we abbreviated~$s_0 = s_0(\vt)$ in the right-hand side. Again by~\cite[Theorem~1.6, estimation of~$P_L$]{Kloeckner2019} and Lemma~\ref{lem:perturbationbound-norm}, we have~$\|P_{s_0,\vt} - \P_{2, 0}\|_0 \ll \nvt + \nvt^{\alpha_0-\eps}$, and therefore
  $$ \Pi_{s_0,\vt}^{(j)}\P_{s_0,\vt}[\1](1) = \frac1{2\log 2}\{1 + O_\eps(\nvt + \nvt^{\alpha_0 - \eps})\} $$
  for~$0\leq j < m$. The quantity~$\partial_{10}\lambda(s_0(\vt), \vt)$ was estimated in~\eqref{eq:approx-partial10lambda}. Finally, we have by~\eqref{eq:smoothing-bounds}
  $$ {\hat w}(s_0) = \frac12\{1+O_\eps(\Omega^{-1} + \nvt + \nvt^{\alpha_0-\eps})\}. $$
  Inserting these estimates in~\eqref{eq:chiQ-residue}, we deduce
  $$ \underset{s=s_0(\vt)}\Res( Q^s \gS(s,\vt) {\hat w}(s)) = \frac 3{\pi^2} Q^{s_0(\vt)} \big\{ 1 + O_\eps(\Omega^{-1} + \nvt + \nvt^{\alpha_0-\eps})\big\}. $$
  Grouping this~\eqref{eq:chiQ-smoothing}\eqref{eq:chiQ-contourshift} and \eqref{eq:bound-restcontour}, we conclude
  $$ \chi_Q(\vt) = \frac 3{\pi^2} Q^{s_0(\vt)} \big\{ 1 + O_\eps(\Omega^{-1} + \Omega^{C+2} Q^{-\delta} + \nvt + \nvt^{\alpha_0-\eps})\big\}. $$
  Our claim follows by optimizing~$\Omega = Q^{\delta/(C+3)}$ and dividing by~$\chi_Q(0)$.
\end{proof}

\begin{proof}[Proof of Theorem~\ref{th:main-general}]
  We use Proposition~\ref{prop:asymp-E-s0} along with Lemmas~\ref{lem:estim-s0-subclt}--\ref{lem:estim-s0} and the value~\eqref{eq:def-fd}.
\end{proof}

\section{Applications}\label{sec:applications}

When using the Berry--Esseen inequality, we will require a separate treatment of very small values of~$t$ in order to handle the error term~$O(Q^{-\delta})$ in Theorem~\ref{th:main-general} (the argument described in~\cite[Remark~3.8]{Goueezel2015} is not readily adapted since part of this term originates from counting pairwise coprime numbers).

\begin{lemma}\label{lem:Caq-roughbound}
  Suppose that the function~$\vphi$ satisfies~\eqref{eq:bound-norm-rough} and \eqref{eq:bound-Ckappa-rough}. Then we have
  \begin{equation}
    \E_Q(\e^{i \langle\vt, S_\vphi(x)\rangle}) = 1 + O(\norm{\vt}^{\alpha_0/3}\log Q).\label{eq:Caq-roughbound-it0}
  \end{equation}
\end{lemma}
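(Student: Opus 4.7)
The plan is to use an elementary sub-linear bound for the exponential followed by the first-moment estimate for the associated Birkhoff sum. Set $\beta := \alpha_0/3$; we may assume $\beta \leq 1$, which is the regime of interest (for $\alpha_0 > 3$ the same argument with $\beta := 1$ yields a stronger bound that dominates $\nvt^{\alpha_0/3}\log Q$ on any fixed bounded $\vt$-range). Using the elementary inequality $|e^{iu}-1| \leq 2^{1-\beta}|u|^\beta$, valid for all $u \in \R$ and $\beta \in [0,1]$, applied with $u = \langle \vt, S_\vphi(x)\rangle$, followed by the Cauchy--Schwarz-type bound $|\langle\vt, v\rangle| \leq \nvt\norm{v}$, one obtains
\[ |\E_Q(\e^{i\langle\vt, S_\vphi\rangle}) - 1| \leq 2^{1-\beta}\nvt^{\beta}\,\E_Q\bigl(\norm{S_\vphi(x)}^{\beta}\bigr). \]
Since $\beta \leq 1$, sub-additivity of $y \mapsto y^{\beta}$ on $\R_+$ gives $\norm{S_\vphi(x)}^{\beta} \leq \sum_{j=1}^{r(x)}\norm{\vphi_j(T^{j-1}(x))}^{\beta}$, reducing the claim to the bound $\E_Q(S_f(x)) \ll \log Q$ for the positive scalar cost $f := \norm{\vphi_j}^{\beta}$, for each fixed $j \in \{1,\dots,m\}$.

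Next I would verify that each such $f$ meets the summability hypothesis required by Vall\'ee's first-moment estimate, namely $\sum_{h \in \cH^m}|h'(0)|\,\norm{f|_{h(\I)}}_\infty < \infty$. This follows from the assumption~\eqref{eq:bound-gamma-rough} by H\"older's inequality with conjugate exponents $\alpha_0/\beta$ and $\alpha_0/(\alpha_0-\beta)$:
\[ \sum_{h\in\cH^m}|h'(0)|\,\norm{\vphi|_{h(\I)}}_\infty^{\beta} \leq \Big(\sum_{h}|h'(0)|\,\norm{\vphi|_{h(\I)}}_\infty^{\alpha_0}\Big)^{\beta/\alpha_0}\Big(\sum_{h}|h'(0)|\Big)^{1-\beta/\alpha_0} < \infty, \]
since the second factor is $\H^m[\1](0)$, finite because iterating $\sum_{n\geq 1}n^{-2} < \infty$ shows $\H^m[\1]$ is bounded on $[0,1]$.

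The mean-value estimate $\E_Q(S_f(x)) = \mu_f \log Q + O(1)$ is then a classical consequence of the dynamical generating-series method of~\cite{Vallee2000}: via~\eqref{eq:def-Pi-st}, the Dirichlet series $\sum_{x \in \Q \cap (0,1]} q(x)^{-s}S_f(x)$ expresses in terms of the quasi-inverse $(\Id-\H_s^m)^{-1}$ weighted by a multiplier involving $f$, which inherits a simple pole at $s = 2$ from the dominant eigenvalue of $\H$ on $\Hol^\kappa$; a Perron inversion with contour shifted to $\Re(s) = 2-\delta$, performed exactly along the lines of the proof of Proposition~\ref{prop:asymp-E-s0}, extracts this residue and yields the claim. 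The only (modest) technical obstacle is to confirm that this first-moment estimate remains available in the present setting, where $f = \norm{\vphi_j}^{\beta}$ has only the bound above and no assumed regularity: this is indeed the case since neither the dominant spectral analysis of $\H$ at $s = 2$ nor the residue computation at the pole appeals to the H\"older condition~\eqref{eq:bound-Ckappa-rough} on $\vphi$.
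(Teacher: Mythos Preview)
Your argument is correct and follows the same overall strategy as the paper: bound $|\e^{iu}-1|$ by $|u|^{\alpha_0/3}$, use sub-additivity (via telescoping of the product $\prod_j \e^{iu_j}$) to reduce to a Birkhoff sum, and then invoke a first-moment estimate of order $\log Q$.

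The only difference lies in how that first-moment estimate is justified. The paper majorizes pointwise by a \emph{digit-only} cost $c(n):=\sup_{x\in(\frac1{n+1},\frac1n)}\norm{\vphi(x)}^{\alpha_0/3}$, observes that hypothesis~\eqref{eq:bound-gamma-rough} says precisely that $c$ has ``strong moments to order~$3$'' in the terminology of Baladi--Hachemi, and cites~\cite[Remark~1.2]{Baladi.Hachemi2008} for $\E_Q(\sum_j c(a_j))\ll\log Q$. You instead sketch the generating-series argument directly for $f=\norm{\vphi_j}^{\beta}$. This is fine, but your last paragraph is a little cavalier: the contour shift does rely on the transfer-operator machinery in $\Hol^\kappa$, and one should note that although $f$ is only \emph{piecewise} H\"older (on each $[\tfrac1{n+1},\tfrac1n]$), this is exactly what is needed for $g\mapsto\H_s[fg]$ to preserve $\Hol^\kappa$, so the argument goes through. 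The paper's route via the citation is more economical; yours is more self-contained.
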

\begin{proof}
  For all~$n\geq 1$, define~$c(n) := \sup_{x\in(\frac 1{n+1}, \frac1n)} \norm{\vphi(x)}^{\alpha_0/3}$. Then with the terminology of~\cite[p.750]{Baladi.Hachemi2008}, the function~$c$ has strong moments to order~$3$. Setting~$a_j(x) = \floor{1/T^{j-1}(x)}$, we deduce~$\E_Q(\sum_{j=1}^r c(a_j)) \ll \log Q$ by~\cite[Remark~1.2]{Baladi.Hachemi2008}. We conclude by taking expectations in the bound $\abs{\e^{i\langle{\vt,S_\vphi(x)}\rangle}-1} = O(\sum_{j=1}^r (\norm{\vt} \norm{\vphi(T^{j-1}(x))})^{\alpha_0/3})$.
\end{proof}

\begin{proof}[Proof of Corollary~\ref{cor:CLT}]
  In~\eqref{eq:thmgen-secondterm}, we estimate the integral~$\II_\phi(t)$ using~\cite[Proposition~2.1]{appB}, which amounts here to integrating the Taylor expansion of~$\e^{it\phi(x)}$ at~$t=0$. From~\eqref{eq:def-sigmagam}, and with the notation~\eqref{eq:def-mean}, we obtain
  \begin{equation}
    U(t) = it \mu - \frac{t^2}2 \sigma^2 + O_\eps(\abs{t}^{\min(3, \alpha_0-\eps)}),\label{eq:estim-psi-CLT}  
  \end{equation}
  where, with~$\psi(x) = \phi(x) + \mu \log x$,
  $$ \sigma = \frac{12\log 2}{\pi^2}\int_0^1 (\psi(x) + \chi(x) - \chi(T(x)))^2 \frac{\df x}{1+x}. $$
  We recall that~$\chi$ is related to~$\psi$ by~\eqref{eq:def-vchi} (with~$m=1$). It is clear that~$\sigma\geq 0$. If~$\sigma=0$, then the integrand vanishes identically, and we would conclude that~$\phi = -\mu\log - \chi + \chi\circ T$, contradicting our hypothesis.  We use the Berry-Esseen theorem~\cite[theorem~II.7.16]{Tenenbaum2015a},~\cite[equation~XVI.(3.13)]{Feller1971} with~$T=c(\log Q)^{1/2}$ for some parameter~$c>0$ to be chosen. Let
  $$ f(\tau) := \E_Q\Big(\exp\Big(i\tau\frac{S_\phi(x) - \mu\log Q}{\sigma\sqrt{\log Q}}\Big)\Big), \qquad g(\tau) := \e^{-\tau^2/2}. $$
  When~$\abs{\tau}\leq Q^{-\delta}$, we use Lemma~\ref{lem:Caq-roughbound} and a Taylor bound, getting
  $$ f(\tau) = 1 + O(\abs{\tau}^{\alpha_0/3}(\log Q)^{1-\alpha_0/6} + \abs{\tau}(\log Q)^{1/2}). $$
  When~$Q^{-\delta}<\abs{\tau}\leq (\log Q)^\eps$, we use Theorem~\ref{th:main-general}, getting
  \begin{align*}
    f(\tau) ={}& \exp\Big\{-\frac{\tau^2}2 + O\Big(\frac{\abs{\tau}}{(\log Q)^{1/2}} + \frac{\abs{\tau}^{\alpha_0-\eps}}{(\log Q)^{\alpha_0/2-1-\eps/2}} + \frac1{Q^{\delta}}\Big)\Big\} \\
    = {}& \e^{-\tau^2/2}\Big(1 + O\Big(\frac{\abs{\tau}}{(\log Q)^{1/2}} + \frac{\abs{\tau}^{\alpha_0-\eps}}{(\log Q)^{\alpha_0/2-1-\eps/2}} + \frac1{Q^{\delta}}\Big)\Big).
  \end{align*}
  Finally, when~$(\log Q)^{\eps} < \abs{\tau} \leq T$, we get
  \begin{align*}
    f(\tau) ={}& \exp\Big\{-\frac{\tau^2}2\Big(1 + O\Big(\frac{1}{\abs{\tau}(\log Q)^{1/2}} + \frac{\abs{\tau}^{\alpha_0-2-\eps}}{(\log Q)^{\alpha_0/2-1-\eps/2}} + \frac1{\tau ^2Q^{\delta}}\Big)\Big\} \\
    = {}& \exp\Big\{-\frac{\tau^2}2(1 + O(c^{\alpha_0-2-\eps})\Big\}
  \end{align*}
  and so, for some small enough choice of~$c>0$, we have~$\abs{f(\tau)} \leq \e^{-\tau^2/3}$.

  We deduce
  \begin{align*}
    {}&\int_{-T}^T \abs{\frac{f(\tau)-g(\tau)}{\tau}}\df \tau \\
    \ll {}& \int_0^{Q^{-\delta}} (\tau^{\alpha_0/3-1} (\log Q)^{1-\alpha_0/6} + (\log Q)^{1/2} + \tau)\df t \\
    {}& + \int_{Q^{-\delta}}^{(\log Q)^\eps} \e^{-\tau^2/2}((\log Q)^{-1/2} + \tau^{\alpha_0-1-\eps}(\log Q)^{-\alpha/2+1+\eps/2} + \tau^{-1} Q^{-\delta})\df \tau \\
    {}& + \int_{(\log Q)^\eps}^T \tau^{-1} \e^{-\tau^2/3} \df \tau \\
    \ll{}& (\log Q)^{-1/2} + (\log Q)^{-\alpha_0/2+1+\eps},
  \end{align*}
  and therefore by the Berry-Esseen inequality
  $$ \sup_{v\in \R}\Big| \P_Q\Big(\frac{S_\phi(x) - \mu\log Q}{\sigma\sqrt{\log Q}} \leq v\Big) - \Phi(v)\Big| \ll \frac1T + \int_{-T}^T \abs{\frac{f(\tau)-g(\tau)}\tau} \df \tau \ll \frac1{(\log Q)^{\min(1/2, \alpha_0/2-1-\eps)}} $$
  as claimed.
\end{proof}

\subsection{Central modular symbols}\label{sec:mod-symb}

Let $f(z) = \sum_{n\geq1}a_n\e(nz)$ be a non-zero primitive Hecke eigencuspform of weight $k$ for~$SL(2,\Z)$ with trivial multiplier. Note that~$k$ is necessarily even and~$k\geq 12$.

Define, for all integer~$1\leq m \leq k-1$ and all~$x\in\Q$, the modular symbol
\begin{equation}
  \syb{x}_{f,m} := \frac{(2\pi i)^m}{(m-1)!} \int_x^{i\infty} f(z) (z-x)^{m-1}\df z.\label{eq:def-syb}
\end{equation}

\begin{lemma}\label{lem:symbol-holder}
  For~$m>k/2$, the function~$x\mapsto \syb{x}_{f,m}$, initially defined over~$\Q$, can be extended to a bounded function in~$\Hol^{1-\eps}(\R)$ for any~$\eps\in(0,1)$.
\end{lemma}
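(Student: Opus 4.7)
The plan is to realize $\syb{x}_{f,m}$ for $x\in\R$ by deforming the path in \eqref{eq:def-syb} to the vertical line $\Re z = x$. Since $f(z)(z-x)^{m-1}$ is holomorphic on $\H$, Cauchy's theorem gives, for $x\in\Q$,
$$\syb{x}_{f,m} = \frac{(2\pi i)^m\, i^m}{(m-1)!} \int_0^\infty f(x+it)\, t^{m-1}\, dt,$$
and I would adopt this formula as the definition of $F(x):=\syb{x}_{f,m}$ for all $x\in\R$. Using the standard cusp-form bound $|f(x+it)| \ll_f t^{-k/2}$ (since $y^{k/2}|f|$ is $SL_2(\Z)$-invariant and bounded on $\H$) together with the exponential decay $|f(x+it)| \ll_f \e^{-2\pi t}$ for $t\geq 1$ coming from the Fourier expansion, the integrand is $\ll t^{m-1-k/2}$ near $0$, which is integrable precisely because $m>k/2$. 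This establishes well-definedness of $F$ on $\R$ and $\|F\|_\infty \ll_f 1$.

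For the Hölder bound I would fix $0 < h \leq 1$ and split at $t=h$:
$$F(x+h) - F(x) = c_m \int_0^h \bigl(f(x{+}h{+}it) - f(x{+}it)\bigr)t^{m-1}\df t + c_m \int_h^\infty \bigl(f(x{+}h{+}it) - f(x{+}it)\bigr)t^{m-1}\df t.$$
The first piece is handled by the pointwise cusp-form bound: its modulus is $\ll \int_0^h t^{m-1-k/2}\df t \ll h^{m-k/2}$. For the second, I would apply the mean value theorem horizontally using the derivative bound $|f'(x+it)|\ll_f t^{-k/2-1}$ (obtained by Cauchy's integral formula on the disc of radius $t/2$) combined with the exponential decay of $f'$ for $t\geq 1$, giving
$$\Bigl|\int_h^\infty \cdots\df t\Bigr| \ll h\Bigl(\int_h^1 t^{m-k/2-2}\df t + \int_1^\infty \e^{-2\pi t} t^{m-1}\df t\Bigr) \ll h\,\log(2/h).$$

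Because $k$ is even and $m$ is an integer, the hypothesis $m>k/2$ forces $m\geq k/2+1$, so $h^{m-k/2}\leq h$, and combining the two bounds yields $|F(x+h)-F(x)| \ll h\log(2/h) \ll_\ee h^{1-\ee}$, proving membership in $\Hol^{1-\ee}(\R)$. The one delicate point is the borderline case $m=k/2+1$, where $\int_h^1 t^{-1}\df t$ produces a logarithmic factor: this is the main technical subtlety, but it is easily absorbed by the $\ee$ slack. For $m\geq k/2+2$, this reasoning actually yields full Lipschitz continuity.
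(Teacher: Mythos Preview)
Your argument is correct and takes a genuinely different route from the paper. The paper expresses $\syb{x}_{f,m}$ as the absolutely convergent Dirichlet series $(-1)^m\sum_{n\geq 1} a_n \e(nx) n^{-m}$ (invoking Deligne's bound $|a_n|\ll n^{(k-1)/2+\ee}$ to justify convergence), and then obtains the Hölder estimate by partial summation against the Wilton-type bound $\sum_{n\leq t} a_n \e(nx) \ll t^{k/2}\log(2t)$ (quoted from Iwaniec). Your approach stays with the period-integral representation and uses only the invariant-norm bound $|f(x+it)|\ll t^{-k/2}$ together with Cauchy's estimate for $f'$; splitting the difference at height $t=h$ is the natural analogue of partial summation in this picture. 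Your argument is more self-contained in that it avoids both Deligne and the cited exponential-sum bound, at the cost of not yielding the explicit $L$-series expression that the paper uses elsewhere. One cosmetic remark: the path from the cusp $x$ to $i\infty$ is already the vertical line $\Re z = x$, so no genuine contour deformation is needed; your formula is simply the parametrization $z = x+it$.
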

\begin{proof}
  By Deligne's bound~\cite{Deligne1974}, we have~$\abs{a_n}\ll_{\eps,f} n^{(k-1)/2+\eps}$. Therefore the sum~$\sum_{n\geq 1} \abs{a_n}/n^m$ is finite, and we deduce by Fubini's theorem that for~$x\in\Q$,
  $$ \syb{x}_{f,m} = (-1)^m \sum_{n\geq 1} \frac{a_n \e(nx)}{n^m}, $$
  and the left-hand side is now defined for~$x\in\R$. By~\cite[Theorem~5.3]{Iwaniec1997}, we have
  \begin{equation}
    \sum_{n\leq t} a_n \e(nx) \ll_f t^{k/2} \log(2t) \qquad (t\geq 1)\label{eq:estim-an-exp}
  \end{equation}
  uniformly in~$x\in\R$. Let~$x, x'\in\R$,~$\delta = \abs{x-x'}$, and for~$t\geq 1$, $S(t) := \sum_{n\leq t} a_n (\e(nx)-\e(nx'))$. Then using~\eqref{eq:estim-an-exp} and partial summation, we obtain~$\abs{S(t)} \ll_{\eps,f} t^{k/2+\eps}\min(1,\delta t)$, and so
  \begin{align*}
    \abs{\syb{x}_{f,m} - \syb{x'}_{f,m}} \ll_{\eps,f} {}& \int_1^\infty t^{-m-1+k/2+\eps}\min(1,\delta t) \df t \\
    \ll_{\eps,f} {}& \delta + \delta^{m-k/2-\eps}
  \end{align*}
  as claimed, since~$m\geq k/2+1$.
\end{proof}

\begin{lemma}\label{lem:recip-symbol}
  For any~$\eps\in(0,1)$, some function~$\phi_f \in \Hol^{1-\eps}([0,1], \C)$, and all~$x\in\Q\smallsetminus\{0\}$, we have
  $$ \syb{x}_{f,k/2} = \syb{\tfrac{-1}x}_{f,k/2} + \phi_f(x). $$
  Moreover, we have~$\HN{1-\eps}{\phi_f\circ h} \ll 1$ uniformly for~$h\in\cH^\ast$.
\end{lemma}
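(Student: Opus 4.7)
The plan is to apply the modular transformation $f(-1/w)=w^k f(w)$ to the Shimura integral via the change of variable $z=-1/w$. Starting from $\syb{x}_{f,k/2}=\frac{(2\pi i)^{k/2}}{(k/2-1)!}\int_x^{i\infty}f(z)(z-x)^{k/2-1}\df z$, the substitution transforms the integrand into $(-1)^{k/2-1}f(w)w^{k/2-1}(1+xw)^{k/2-1}\df w$. The image of the vertical contour $\{x+iy:y>0\}$ in the $w$-plane is a semicircular arc $\Gamma$ from $w=-1/x$ to $w=0$, contained in $\overline{\mathbb{H}}$. Since $f$ is holomorphic on $\mathbb{H}$ with rapid cusp-form decay at $i\infty$, Cauchy's theorem allows deforming $\Gamma$ into the concatenation $(-1/x\to i\infty)\cup (i\infty\to 0)$ of two vertical rays, the horizontal piece at infinity contributing nothing. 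This yields $\int_\Gamma=\int_{-1/x}^{i\infty}-\int_0^{i\infty}$ of the new integrand.

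Next I would algebraically expand using $1+xw=x(w+1/x)$ and $w=(w+1/x)-1/x$:
$$w^{k/2-1}(1+xw)^{k/2-1}=\sum_{j=0}^{k/2-1}\binom{k/2-1}{j}(-1)^{k/2-1-j}x^j(w+1/x)^{k/2-1+j}.$$
Integrating term-wise over $(-1/x,i\infty)$ identifies each summand with a multiple of $\syb{-1/x}_{f,k/2+j}$. After collecting prefactors, the $j=0$ term recovers the main reciprocity contribution $(-i)^{k/2-1}\syb{-1/x}_{f,k/2}$; the $j\ge 1$ terms produce expressions of the form $x^j\cdot\mathrm{const}\cdot\syb{-1/x}_{f,k/2+j}$, involving higher modular symbols of index $m=k/2+j>k/2$. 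The $\int_0^{i\infty}$ piece is a polynomial in $x$ of degree at most $k/2-1$ whose coefficients are convergent Shimura integrals at $0$ (using rapid decay of $f$ at both cusps along the imaginary axis). I would define $\phi_f(x)$ to be the sum of this polynomial and the $j\ge1$ contributions.

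The regularity of $\phi_f$ is inherited from Lemma~\ref{lem:symbol-holder}: for $m>k/2$, $\syb{\cdot}_{f,m}\in\Hol^{1-\ee}(\mathbb{R})$ and is bounded, so each $x^j\syb{-1/x}_{f,k/2+j}$ is Hölder on every compact subset of $(0,1]$, and the polynomial part is smooth. For the uniform bound on $\cH^4$, fix $h=h_{n_1}\circ\dotsb\circ h_{n_4}\in\cH^4$. Its image $h([0,1])\subset[1/(n_1+1),1/n_1]$, on which $x\mapsto -1/x$ is smooth with Lipschitz constant $\ll n_1^2$; combining this with Lemma~\ref{lem:symbol-holder} via the chain rule gives $\HN{1-\ee}{\syb{-1/\cdot}_{f,m}|_{h([0,1])}}\ll n_1^{2(1-\ee)}$. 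The bounded distortion property yields $\|h'\|_\infty\ll n_1^{-2}$, so $\HN{1-\ee}{\syb{-1/\cdot}_{f,m}\circ h}\ll n_1^{2(1-\ee)}\cdot n_1^{-2(1-\ee)}=1$. Combining with the tame factor $(x^j\circ h)\ll n_1^{-j}$ (which further dampens the Hölder norm), summing over $j\ge1$ and adding the smooth polynomial part, one obtains $\HN{1-\ee}{\phi_f\circ h}\ll 1$ uniformly for $h\in\cH^4$.

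The main technical obstacle will be the careful bookkeeping of signs, powers of $i$, and factorial factors through the contour manipulation to identify the coefficient $(-i)^{k/2-1}$ precisely. A secondary but instructive point is the cancellation in the uniform Hölder estimate on $\cH^4$: the Hölder blow-up of $\syb{\cdot}_{f,m}\circ(-1/\cdot)$ as $h([0,1])$ approaches $0$ is offset exactly by the contraction rate of depth-$4$ Gauss inverse branches, which is why the statement of the lemma pertains specifically to $\cH^4$ (and not to $\cH$ or $\cH^2$).
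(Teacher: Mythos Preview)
Your approach is correct and genuinely different from the paper's. The paper works with the full Eichler integral $\hat f(z)=\frac{(-i)^{k-1}}{(k-1)!}\int_z^{i\infty}(\tau-z)^{k-2}f(\tau)\df\tau$, establishes that the period polynomial $r_f(z)=\hat f(z)-z^{k-2}\hat f(-1/z)$ has degree $k-2$, then expands both $\hat f(x(1+i\delta))$ and $(x(1+i\delta))^{k-2}\hat f(-1/(x(1+i\delta)))$ as $\delta\to0^+$ and reads off the coefficient of $\delta^{k/2-1}$. Your route—substituting $z=-1/w$ directly in the integral defining $\syb{x}_{f,k/2}$, deforming the contour through $i\infty$, and expanding $w^{k/2-1}(1+xw)^{k/2-1}$ in powers of $(w+1/x)$—is more elementary and avoids the limiting argument entirely. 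Both yield $\phi_f$ as a polynomial in $x$ plus a finite linear combination of $x^j\syb{-1/x}_{f,k/2+j}$ for $j\geq1$, and Lemma~\ref{lem:symbol-holder} then controls the regularity. Your approach has the advantage of transparency; the paper's has the advantage of producing the full period-polynomial relation as a byproduct.

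One remark on your final paragraph: the reason the statement is phrased for $\cH^4$ is \emph{not} a cancellation that fails at smaller depth. Your own computation shows that for $h\in\cH$ (depth one) with $h([0,1])\subset[\tfrac1{n+1},\tfrac1n]$, the blow-up $n^{2(1-\ee)}$ of $\HN{1-\ee}{\syb{-1/\cdot}_{f,m}|_{h([0,1])}}$ is already exactly compensated by $\|h'\|_\infty^{1-\ee}\asymp n^{-2(1-\ee)}$. The paper in fact proves $\HN{1-\ee}{\phi_f\circ h}\ll1$ first for $h\in\cH$, then deduces it for $\cH^4$ by composition. The depth $4$ in the statement is there only because the downstream application uses $m=4$ (period $4$ coming from $\omega^4=1$); the Hölder bound itself holds uniformly over $\cH^n$ for every $n\geq1$.
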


We will actually only require the last bound for~$h\in\cH$.

\begin{proof}
  For~$\Im(z)>0$, define
  \begin{align*}
    \hat f(z):= {}& \sum_{n\geq 1}\frac{a_n}{n^{k-1}}\e(nz) \\
    = {}& \sum_{n\geq 1}a_n \e(nz) \frac{(-2\pi i)^{k-1}}{(k-1)!}\int_0^{i\infty}\tau^{k-2}\e^{2\pi i n \tau}\df\tau \\
    = {}& \frac{(-2\pi i)^{k-1}}{(k-1)!}\int_z^{i\infty}(\tau-z)^{k-2}f(\tau)\df\tau.
  \end{align*}
  Using the modularity relation $f(-1/z)=z^{k} f(z)$, and changing variables $\tau\to-1/\tau$, we obtain
  \begin{align*}
    \frac{(k-1)!}{(-2\pi i)^{k-1}} z^{k-2}\hat f(-1/z)&=
    z^{k-2}\int_{-1/z}^{i\infty}(\tau+1/z)^{k-2}f(\tau)\df\tau=\int_{z}^{0}(-1/\tau+1/z)^{k-2}(z\tau)^{k-2}f(\tau)\df\tau\\
    &=-\int_{0}^{z}(\tau-z)^{k-2}f(\tau)\df\tau
  \end{align*}
  and so the period polynomial of~$f$
  \begin{equation*}
    r_f(z):=\hat f(z)-z^{k-2}\hat f(-1/z) = \frac{(-2\pi i)^{k-1}}{(k-1)!} \int_0^{i\infty} (\tau-z)^{k-2}f(\tau) \df\tau, \qquad (\Im(z)>0),
  \end{equation*}
  is indeed a polynomial in~$z$ of degree at most $k-2$.

  Let now~$x\in\Q_{>0}$. As~$\delta\to0$ with~$\delta>0$, we have
  \begin{align*}
    \hat f(x(1+i\delta)) = {}& \frac{(-2\pi i)^{k-1}}{(k-1)!} \int_{x(1+i\delta)}^{i\infty}(\tau-x-ix\delta)^{k-2}f(\tau)\df\tau\\
    = {}& \frac{(-2\pi i)^{k-1}}{(k-1)!} \bigg(\int_{x}^{i\infty}-\int_{x}^{x(1+i\delta)}\bigg) (\tau-x-ix\delta)^{k-2}f(\tau) \df\tau.
  \end{align*}
  The second integral is $O_{M, x, f}(\delta^{M})$ as $\delta\to0$ for any fixed $M>0$, since $f$ is a cusp form. Thus, by the binomial formula, we obtain
  \begin{align*}
    \hat f(x(1+i\delta))&=\frac{(-1)^{k-1}}{(k-1)} \sum_{\ell=0}^{k-2}\frac{(2\pi x\delta)^{\ell}}{\ell!} \syb{x}_{f,k-1-\ell} + o_{x, f}(\delta^{k-2}).
  \end{align*}
  In the same way, since $-\frac1{x(1+i\delta)}=-\frac1x(1-i\delta')$ with $\delta' = \delta/(1+i\delta)$, so that $\Re(\delta)>0$, we have
  \begin{align*}
    {}& (x(1+i\delta))^{k-2} \hat f(-\frac1{x(1+i\delta)}) \\
    {}& \quad = \frac{(-1)^{k-1}}{(k-1)} \sum_{\ell=0}^{k-2} (i\delta)^\ell \sum_{j=0}^{\ell} \frac{(-2\pi i)^j}{j!}\binom{k-2-j}{\ell-j}x^{k-2-j} \left\langle\frac{-1}x\right\rangle_{f,k-1-j} + o_{x, f}(\delta^{k-2}).
  \end{align*}
  With~$m := k/2-1$, reading the coefficients of~$\delta^{m}$ on each side of the definition of~$r_f(x(1+i\delta))$, and since~$k$ is even, we deduce
  $$ \syb{x}_{f,1+m} - \sum_{j=0}^{m} c_{j,k} x^j \left\langle\frac{-1}x\right\rangle_{f,1+m+j} = -(k-1)(2\pi)^{m+1} i^m r_f^{(m)}(x), $$
  $$ c_{j,k} := j!\binom{m}{j} \binom{m+j}{j} (-2\pi i)^{-j} . $$
  We single out the term~$j=0$. The function
  $$ \phi_f(x) := \sum_{j=1}^{m} c_{j,k}x^j \left\langle\frac{-1}x\right\rangle_{f,1+m+j} -(k-1)(2\pi)^{m+1} i^m r_f^{(m)}(x), $$
  defines, by Lemma~\ref{lem:symbol-holder}, a function in~$\Hol^{1-\eps}([\frac1{n+1}, \frac1n])$ for all~$\eps\in(0,1)$ and~$n\geq 1$. The value~$c_{0,k}=1$ proves our claimed formula. Finally, for~$h\in\cH$, by the rules~\eqref{eq:Ckappa-product}, \eqref{eq:Ckappa-composition}, \eqref{eq:Ckappa-morediff} and~$1$-periodicity of~$x\mapsto \syb{x}_{f,1+m+j}$, we have
  \begin{align*}
    \HN{1-\eps}{\phi_f\circ h} \ll_f {}& \sum_{j=1}^m \HN{1-\eps}{x\mapsto h(x)^j \syb{x}_{f,1+m+j}} + \HN{1-\eps}{r_f^{(m)}\circ h} \\
    \ll_f {}& \sum_{j=1}^m \big(\norm{(h^j)'}_\infty^{\eps}\norm{h^j}_\infty^{1-\eps} + \norm{h^j}_\infty\big) + \norm{h'}_\infty^{1-\eps} \\
    \ll_f {}& 1.
  \end{align*}
  By the rule~\eqref{eq:Ckappa-composition} again, and since~$\|h\|_{(1)} \leq \|h'\|_\infty \ll 1$ for~$h\in\cH^\ast$, we deduce that the same bound~$\HN{1-\eps}{\phi_f\circ h} \ll 1$ holds for~$h\in\cH^*$.
\end{proof}

\begin{proof}[Proof of Theorems~\ref{thm:modsymbol} and~\ref{thm:modsymbol-LD}]

  Iterating Lemma~\ref{lem:recip-symbol}, we have for all~$x\in\Q\cap (0, 1]$,
  \begin{equation}
    \syb{x}_{f,k/2} = \sum_{j=1}^{r}
    \phi((-1)^{j-1} T^{j-1}(x)) + \syb{0}_{f,k/2}.\label{eq:syb-iter}
  \end{equation}
  Note that changing the coordinates of the set~$\cR\subset \R^2$ by an amount~$O(1/\sqrt{\log Q})$ in~\eqref{eq:estim-modsymbol} does not alter the right-hand side, so that we may replace~$\syb{x}_{f,k/2}$ by~$\syb{x}_{f,k/2}-\syb{0}_{f,k/2}$. For all~$\vt\in\R^2$, identifying~$\C\simeq \R^2$ with basis~$(1, i)$, we let
  $$ \chi(t) := \E_Q\Big(\exp\Big\{\frac{i\langle \vt, \syb{x}_{f,k/2}-\syb{0}_{f,k/2}\rangle}{\sigma_f\sqrt{\log Q}}\Big\}\Big), $$
  where we recall that~$\sigma_f$ was defined in Theorem~\ref{thm:modsymbol}.
  We apply Theorem~\ref{th:main-general} with~$m=1$ and $d=2$. The hypothesis~\eqref{eq:bound-norm-rough} is satisfied with~$\alpha_0=4$, since the function $\phi_f$ is continuous on~$[0,1]$ by Lemma~\ref{lem:recip-symbol} and therefore bounded. The hypothesis~\eqref{eq:bound-Ckappa-rough} is satisfied for any~$\lambda_0 < \frac1{2-2\eps}$, by using Lemma~\ref{lem:recip-symbol} and noting that~$\HN{1-\eps}{\phi_f|_{h(\I)}} \ll \abs{h'(0)}^{-1+\eps} \HN{1-\eps}{\phi_f\circ h}$. Using~\eqref{eq:thmgen-secondterm} along with the expressions~\eqref{eq:approxs0-alpha-2}, \eqref{eq:def-sigmagam}, we obtain for some~$\mu_f \in\R^2$ and real~$2\times 2$ matrix~$\Sigma_f$ the estimate
  $$ \chi(\vt) = \exp\Big\{ i \langle \vt, \mu_f \rangle - \tfrac12 \vt^T \Sigma_f \vt + O\Big(\frac{\nvt + \nvt^3}{\sqrt{\log Q}} + \frac1{Q^\delta}\Big)\Big\}. $$
  To compute the variance, we appeal to the bound
  \begin{equation}
    \E_Q(|\syb{x}_{f,m} - \mu_f\log Q|^4) \ll (\log Q)^{2}.\label{eq:syb-3mom-bound}
  \end{equation}
  This can be proved by shifting to the setting of~\cite{BaladiVallee2005}, where the variable~$t$ is extended to a complex neighborhood of the origin; the functions~$U, V$ defined in~\eqref{eq:def-psi-1} are, in this case, analytic in~$t$ near the origin, by boundedness of~$\vphi_j$.
  Then by \emph{e.g.}~\cite[th.~25.12]{Billingsley1995} and~\eqref{eq:syb-3mom-bound}, we find
  $$ \mu_f = \lim_{Q\to \infty} \frac{\E_Q(\syb{x}_{f,k/2})}{\log Q}, \qquad \Sigma_f = \lim_{Q\to\infty} \frac{\E_Q(\syb{x}_{f,k/2} \syb{x}_{f,k/2}^T)}{\sigma_f^2\log Q}. $$
  On the other hand, as~$Q\to\infty$, we have the following asymptotic formulae, where now~$\syb{x}_{f,k/2}$ is interpreted as a complex number:
  \begin{equation}
    \E_Q(\syb{x}_{f,m}) = o(\log Q),\label{eq:syb-expec}
  \end{equation}
  \begin{equation}
    \E_Q(\syb{x}_{f,m}^2) = o(\log Q),\label{eq:syb-secmom-sign}
  \end{equation}
  \begin{equation}
    \E_Q(|\syb{x}_{f,m}|^2) \sim 2 \sigma_f^2 \log Q.\label{eq:syb-secmom-abs}
  \end{equation}
  These statements can be proven (in a stronger form) by standard methods, using orthogonality of additive characters, the approximate functional equation~\cite[Theorem~5.3]{IwaniecKowalski2004} and Rankin-Selberg theory~\cite[Chapter~13.6]{Iwaniec1997}. The value~$\sigma_f$ appears as~$\sigma_f^2 = \Res_{s=1} L(f\times \bar{f}, 1)$, which is evaluated in~\cite[eq.~(13.52)]{Iwaniec1997}. Note that the analogues of~\eqref{eq:syb-secmom-sign} and~\eqref{eq:syb-secmom-abs} with a single average over numerator have recently been computed in~\cite{BlomerEtAl2018}; in their result as stated, however, the denominator is assumed to be prime.

  The equality~\eqref{eq:syb-expec} shows that~$\mu_f = 0$. The equality~\eqref{eq:syb-secmom-sign} shows that the matrix~$\Sigma_f$ is a multiple of the identity, and the equality~\eqref{eq:syb-secmom-abs} then shows that~$\Sigma_f = \Id$. Using the Berry--Esseen inequality, along with Lemma~\ref{lem:Caq-roughbound}, concludes the proof of Theorem~\ref{thm:modsymbol}.

  To justify Theorem~\ref{thm:modsymbol-LD}, we use again the analyticity of~$U, V$ defined in~\eqref{eq:def-psi-1} in a neighborhood of the origin. Hypothesis~$(4)$ of~\cite{Hwang1996} is therefore satisfied in our case with~$\phi(n)$ replaced by~$\log Q$, and~\eqref{eq:modsym-ldp} follows by~\cite[Theorem~1]{Hwang1996}, taking~$t = \eps \sqrt{\log Q}$.
\end{proof}

\subsection{Central value of the Estermann function}

\begin{proof}[Proof of Theorem~\ref{thm:estermann}]
  The Estermann function~$D$ corresponds to~$D_0$ in notation of~\cite{Bettin2016}. For~$x\in\Q$, it is the analytic continuation of
  $$ D(s, x) = \sum_{n\geq 1} \frac{\e(nx) \tau(n)}{n^s}, $$
  initially defined for~$\Re(s) > 1$, evaluated at~$s=\tfrac12$. We recall that~$\tau(n)$ is the number of divisors of~$n$. We use~\cite[Lemma~10]{Bettin2016}, noting that the quantity~$v_{j-1}/v_j$ corresponds to~$T^{j-1}(x)$. Therefore
  \begin{align*}
    D(\tfrac12, x) = {}& \zeta(\tfrac12)^2 + \sum_{j=1}^r \phi_j(T^{j-1}(x)),
  \end{align*}
  where
  \begin{align*}
    \phi_j(x) = {}& \tfrac12 x^{-1/2}\big(\log(1/x) + \gamma_0 - \log(8\pi) - \tfrac\pi 2\big) \\
    {}& + (-1)^{j-1} i\tfrac12 x^{-1/2}\big(\log(1/x) + \gamma_0 - \log(8\pi) + \tfrac\pi 2\big) + \zeta\big(\tfrac12\big)^2 + \cE((-1)^jx),
  \end{align*}
  and~$\cE$, which corresponds to~$\cE(0,\cdot)$ in the notation of~\cite[p.~6900]{Bettin2016}, is bounded and continuous. By comparing the cases~$N=0$ and~$N=1$ of~\cite[eq.~(3.17)]{Bettin2016}, we have
  \begin{equation}
    \cE(x) = \cE_1(x) + \sum_{j\in\{1, 2\}} \frac{(-1)^j}{j\pi}\Gamma(\tfrac12+j)^2 \Big(\frac{x}{2\pi i}\Big)^j\big( D(\tfrac12+j, -1/x) + \zeta(\tfrac12+j)^2/j\big),\label{eq:rel-E-E1}
  \end{equation}
  where~$\cE_1 \in \CC^1([0, 1])$. For~$j\geq 1$, the function~$D(\frac12+j, \cdot)$
  belongs to~$\Hol^{1/2-\eps}([0, 1], \C)$ for all~$\eps\in(0, 1/2)$. We deduce that the right-hand side of~\eqref{eq:rel-E-E1} defines, for all~$n\geq 1$, a function in~$\Hol^{1/2-\eps}([\frac1{n+1},\frac1n], \C)$. By an argument identical to Lemma~\ref{lem:recip-symbol}, we also have the bound~$\HN{1/2-\eps}{\cE\circ h} \ll 1$ for all~$h\in\cH^2$, and similarly for~$x\mapsto \cE(-x)$. This validates the hypothesis~\eqref{eq:bound-Ckappa-rough} with any~$\lambda_0 < 1$.

  On the other hand, setting~$\alpha_0 = 2-\eps$, we have
  $$ \sum_{h\in\cH} \abs{h'(0)} \|\vphi_j|_{h(\I)}\|_\infty^{\alpha_0} \ll \sum_{n\geq 1} n^{-2} \abs{n^{1/2}\log(2n)}^{\alpha_0} < \infty, $$
  and so the hypothesis~\eqref{eq:bound-norm-rough} holds. We apply Theorem~\ref{th:main-general} with~$m=2$, $d=2$, and the above given value~$\alpha = 2-\eps$. 
  In the estimate~\eqref{eq:thmgen-secondterm} we evaluate the integrals by appealing to~\cite[Corollary~3.1]{appB}. We deduce that there is a constant~$\mu\in\C $ such that, letting~$\sigma = 1/\pi$ and
  $$ \chi(t) := \E_Q\Big(\exp\Big\{i\Re\Big(t\frac{D(\frac12,x)-\mu\log Q}{\sigma(\log Q)^{1/2}(\log\log Q)^{3/2}}\Big)\Big\}\Big), \qquad (t\in\C), $$
  we have
  \begin{equation}
    \chi(t) = \exp\Big\{ - \frac{\abs{t}^2}{2} + O\Big(\frac1{Q^\delta} + \frac{\abs{t} + \abs{t}^2}{(\log \log Q)^{1-\eps}} \Big) \Big\}.\label{eq:esterman-estim-chi}
  \end{equation}
  We then obtain, by the Berry-Essen inequalities and the bound~\eqref{eq:Caq-roughbound-it0} for small frequencies, the statement of Theorem~\ref{thm:estermann} up to the value of the expectation. We compute the expectation from the initial object, using the expression~\cite[eq. (3.2)]{IwaniecKowalski2004} for Ramanujan sums. For~$s\in\C$ with~$\Re(s)>1$ we have
  \begin{align*}
    \ssum{1\leq a \leq q \\ (a, q)=1} D(s, a/q) =
    \zeta(s)^2 \sum_{\ell\mid q} \mu\Big(\frac q\ell\Big)\frac{\tau(\ell)}{\ell^{s-1}} \prod_{p^\nu\| \ell} (1 - \tfrac{\nu}{\nu+1}p^{-s}),
  \end{align*}
  and so by analytic continuation
  \begin{align*}
    \ssum{1\leq a \leq q \\ (a, q)=1} D(\tfrac12, a/q) = \zeta(\tfrac12)^2 \sum_{\ell\mid q} \mu\Big(\frac q\ell\Big) \tau(\ell) \ell^{1/2} \prod_{p^\nu \| \ell} (1 - \tfrac{\nu}{\nu+1}p^{-1/2}) = O_\eps(q^{1/2+\eps}).
  \end{align*}
  We deduce that
  \begin{equation}
    \E_Q(D(\tfrac12, x)) \ll_{\eps} Q^{-1/2+\eps}.\label{eq:bound-exp-D0}
  \end{equation}
  On the other hand, by~\eqref{eq:esterman-estim-chi}, we have
  $$ \chi(t) = 1 + O_\eps(Q^{-\delta} + (\log\log Q)^{-1/2+\eps} t + t^2), $$
  whereas exanding the exponential as~$\e^{iu} = 1 + iu + O(u^{3/2})$ in the definition of~$\chi(t)$, by~\eqref{eq:bound-exp-D0}, we get
  $$ \chi(t) = 1 + it\mu(\log Q)^{1/2}(\log\log Q)^{-3/2} + O(Q^{-1/3} + \abs{t}^{3/2}((\log Q)^{1/2} + \E_Q(|D(\tfrac12, x)|^{3/2})). $$
  Using the trivial bound~$\abs{D(\tfrac12, x)} \ll \sum_{j=1}^{r(x)} a_j(x)^{2/3}$ and Hölder's inequality, we find
  \begin{align*}
    \E_Q(|D(\tfrac12, x)|^{3/2}) \ll {}& \E_Q\Big(\big(\sum_{j=1}^{r(x)}1\big)^{1/2}\big(\sum_{j=1}^{r(x)} a_j(x)\big)\Big) \\
    \ll {}& (\log Q)^{5/2}
  \end{align*}
  by the bound~$r(x) \ll \log(\denom(x)+1)$ and~\cite[Theorem]{YaoKnuth1975}. Setting~$t = Q^{-\delta/2}$, we obtain
  $$ \mu (\log Q)^{1/2}(\log\log Q)^{-3/2} = O_\eps((\log\log Q)^{-1/2+\eps}) $$
  and so~$\mu = 0$, by letting~$Q\to\infty$.
\end{proof}

\subsection{Large moments of continued fractions expansions}

For all~$\lambda\geq 0$ and~$x\in\Q\cap(0,1)$, we recall that~$\Sigma_\lambda(x)$ was defined in~\eqref{eq:def-Sigmalambda}.
%
For~$0<\alpha<2$, define
\begin{equation*}
  c_\alpha = \bigg(\frac{\Gamma(1-\alpha)\cos(\frac{\pi\alpha}{2})}{\pi^2/12}\bigg)^{1/\alpha}
\end{equation*}
and by continuity~$c_1 = \frac{6}\pi$. Let
\begin{equation*}
  g_\alpha(x) :=  \left\{
    \begin{aligned}
      {}& \frac1{2\pi} \int_{-\infty}^\infty \e^{-itx - (c_\alpha \abs{t})^\alpha(1 - i\sgn(t)\tan(\frac{\pi \alpha}2))} \df t, {}& \qquad (\alpha\neq 1) \\
      {}& \frac1{2\pi} \int_{-\infty}^\infty \e^{-itx - c_1 \abs{t}(1 + i\frac{2}\pi \sgn(t)\log \abs{t})} \df t, {}& \qquad (\alpha= 1)
    \end{aligned}\right.
\end{equation*}
be the probability distribution function of a stable law~$S_{\alpha}(c_\alpha, 1, 0)$ (see~\cite{SamorodnitskyTaqqu1994}), and
$$ G_\alpha(v) := \int_{-\infty}^v  g_\alpha(x)\df x. $$
\begin{theorem}\label{th:m_lambda}
  Let~$\lambda\geq 0$ and~$v\in\R$, and for~$\lambda<1$ define~$\mu_\lambda = \frac{12}{\pi^2}\sum_{n\geq 1} n^\lambda\log(\frac{(n+1)^2}{n(n+2)})$.
  \begin{enumerate}
    \item If~$\lambda<1/2$, then with for some~$\sigma_\lambda>0$, we have
    \begin{equation}
      \P_Q\Big(\frac{\Sigma_{1/2}(x) - \mu_\lambda \log Q}{\sigma_\lambda\sqrt{\log Q}} \leq v\Big) = \Phi(v) + O_\eps\Big(\frac1{(\log Q)^{\min(1/2, 1/(2\lambda)-1-\eps)}}\Big).\label{eq:moment-clt}
    \end{equation}
    \item If~$\lambda=1/2$, then with~$\sigma=(\pi^2/6)^{-1/2}$, we have
    \begin{equation}
      \P_Q\Big(\frac{\Sigma_{1/2}(x) - \mu_{1/2} \log Q}{\sigma\sqrt{\log Q\log \log Q}} \leq v\Big) = \Phi(v) + O\Big(\frac1{(\log\log Q)^{1-\eps}}\Big).\label{eq:moment-1/2}
    \end{equation}
    \item If~$1/2<\lambda<1$, then
    \begin{equation}\label{eq:moment-1/2-1}
      \P_Q\Big(\frac{\Sigma_\lambda(x) - \mu_\lambda \log Q}{(\log Q)^\lambda} \leq v\Big) = G_{1/\lambda}(v) + O\Big(\frac1{(\log\log Q)^{1-\eps}}\Big).
    \end{equation}
    \item If~$\lambda=1$, then letting~$\gamma_0$ denote the Euler constant,
    \begin{equation}
      \P_Q\Big(\frac{\Sigma_1(x)}{\log Q} - \frac{\log\log Q - \gamma_0}{\pi^2/12} \leq v\Big) = G_1(v) + O\Big(\frac1{(\log Q)^{1-\eps}}\Big).\label{eq:discrete-heinrich2}
    \end{equation}
    \item If~$\lambda>1$, then
    \begin{equation}
      \P_Q\Big(\frac{\Sigma_\lambda(x)}{(\log Q)^\lambda} \leq v\Big) = G_{1/\lambda}(v) + O\Big(\frac1{(\log\log Q)^{1-\eps}}\Big).\label{eq:moment-large}
    \end{equation}
  \end{enumerate}
  In all four cases the implied constant depends at most on~$\eps$ and~$\lambda$.
\end{theorem}

Except for~\eqref{eq:moment-1/2-1}, we expect the error terms to be optimal up to an exponent~$\eps$. The estimate~\eqref{eq:moment-large} is in accordance with results on the statistical distribution of~$\max_{1\leq j \leq r} a_j$~\cite{Hensley1991,CesarattoVallee2011}.

To prove Theorem~\ref{th:m_lambda}, we note that~$\Sigma_\lambda(x) = S_{\phi_\lambda}(x)$ with~$\phi_\lambda(x) := \floor{1/x}^\lambda$. The function~$\phi_\lambda$ satisfies the hypotheses of Theorem~\ref{th:main-general} with~$d=m=1$, $\alpha_0 = 1/\lambda-\eps$, $\kappa_0 = 1$, and all small enough exponents~$\lambda_0>0$.

\subsubsection{Case~$\lambda<1/2$}

We wish to apply Corollary~\ref{cor:CLT}. To proceed, we need to show that~$\phi_\lambda$ is not of the shape~$c\log + f - f\circ T$. Suppose that it were, let~$n\geq 1$ and~$x\in(0, 1)$ solve~$x = \frac1{n+x}$. Evaluating at~$x$ yields~$n^\lambda \sim -c\log n$ as~$n\to \infty$, a contradiction. Corollary~\ref{cor:CLT} may be applied and yields~\eqref{eq:moment-clt}.

\subsubsection{Case~$\lambda=1/2$}

The estimates~\eqref{eq:asympt-expect-UV}--\eqref{eq:thmgen-secondterm} hold, and the integral is evaluated in~\cite[Corollary~3.2]{appB}. With the notation~$\sigma = (\pi^2/6)^{-1/2}$ and
$$ \chi_{1/2, Q}(t) := \E_Q\Big(\exp\Big\{it\frac{\Sigma_{1/2}(x) - \mu_{1/2}\log Q}{\sigma\sqrt{\log Q \log\log Q}}\Big\}\Big), $$
we find that for~$\abs{t}\leq \log\log Q$,
$$ \chi_{1/2,Q}(t) = \exp\Big\{-\frac{3t^2}{\pi^2} + O\Big(\frac1{Q^\delta} + \frac{\abs{t}}{(\log Q)^{1/2-\eps}} + \frac{t^2}{(\log\log Q)^{1-\eps}}\Big)\Big\}.  $$
On the other hand, by~\eqref{eq:Caq-roughbound-it0}, we have~$\chi_{1/2,Q}(t) = 1 + O(\abs{t}^{1/2} \log Q)$. Inserting these two bounds in the Berry-Esseen theorem~\cite[theorem~II.7.16]{Tenenbaum2015a} yields the claimed conclusion~\eqref{eq:moment-1/2}.

\subsubsection{Case~$\lambda=1$}

We use the estimates~\eqref{eq:asympt-expect-UV}--\eqref{eq:def-psi-1}. Define
$$ \chi_{1, Q}(t) := \E_Q\Big(\exp\Big\{it\Big(\frac{\Sigma_{1}(x)}{\log Q} - \frac{\log\log Q-\gamma_0}{\pi^2/12}\Big)\Big\}\Big). $$
Then for~$0<t\leq (\log Q)^{1-\eps}$, we obtain by~\cite[Corollary~3.3]{appB}
$$ \chi_{1,Q}(t) = \exp\Big\{- \frac{it}{\pi^2/12}\Big(\abs{\log t} - \pi i\Big) + O\Big(\frac1{Q^\delta} + \frac{\abs{t}^{1-\eps} + \abs{t}^{2-\eps}}{(\log Q)^{1-\eps}}\Big)\Big\}, $$
and we may again conclude by the Berry-Esseen inequality.

\subsubsection{Case~$\lambda\not\in\{1/2, 1\}$}

Assume first~$\lambda>1$. Then we use the estimates~\eqref{eq:asympt-expect-UV}--\eqref{eq:def-psi-1}. Define
$$ \chi_{\lambda, Q}(t) := \E_Q\Big(\exp\Big\{it\frac{\Sigma_\lambda(x)}{(\log Q)^\lambda}\Big\}\Big). $$
Then, by~\cite[Corollary~3.2]{appB}, for~$0\leq t\leq \log\log Q$ we obtain
$$ \chi_{\lambda,Q}(t) = \exp\Big\{- (c_{1/\lambda} t)^{1/\lambda}(1 - i\tan(\tfrac{\pi}{2\lambda})) + O\Big(\frac1{Q^\delta} + \frac{t^{1/\lambda-\eps}}{(\log Q)^{1-\eps}} + \frac{t^{1/\lambda}}{(\log\log Q)^{1-\eps}}\Big)\Big\} $$
and we may again conclude by the Berry-Esseen inequality and Lemma~\ref{lem:Caq-roughbound}.

The case~$\lambda\in(1/2,1)$ follows by identical computations, the shift by~$\mu_\lambda\log Q$ being accounted for by the linear term in the asymptotic evaluation of~\eqref{eq:thmgen-secondterm}, as performed in~\cite[Corollary~3.2]{appB}

\subsection{Dedekind sums}

\begin{proof}[Proof of Theorem~\ref{th:dedekind}]
  By~\cite[Theorem~1]{Hickerson1977}, we have for~$x\in\Q\cap(0, 1)$ the equality
  $$ s(x) = \delta_{x} + \frac1{12}\sum_{j=1}^r \phi_j(T^{j-1}(x)), $$
  where~$\abs{\delta_x} \leq \frac{5}{12}$ and~$\phi_j(x) := (-1)^{j-1} \floor{1/x}$. Note that~$\phi_j$ depends only on the parity of~$j$. Since changing~$v$ by an amount~$O(1/\log Q)$ does not affect the right-hand side of~\eqref{eq:estim-dedekind}, we may replace~$s(x)$ by~$s(x)-\delta_x$. Let
  $$ \chi(t) :=  \E_Q\Big(\exp\Big\{it\frac{s(x)-\delta_{x}}{\log Q}\Big\}\Big). $$
  Then Theorem~\ref{th:main-general} applies with~$d=1$, $m=2$ and the functions~$\phi_j$ defined above, with~$\alpha_0 = 1-\eps$. We use the expression~\eqref{eq:def-psi-1} and refer to~\cite[Corollary~3.4]{appB} for the evaluation of the integral, obtaining
  $$ \chi(t) = \exp\Big\{ -\frac{\abs{t}}{2\pi} + O\Big(\frac1{Q^\delta} + \frac{\abs{t} + \abs{t}^{1-\eps}}{(\log Q)^{1-\eps}}\Big) \Big\}, $$
  and we conclude again by the Berry-Esseen inequality.
\end{proof}

\bibliographystyle{amsalpha2}
\bibliography{bib2}

\end{document}